\DeclareSymbolFontAlphabet{\mathcal}{symbols}
\pgfplotsset{compat=1.13}
\title[Intersection homology and homotopy groups]{Relation between intersection homology and homotopy groups}
\date{\today}
\author{David Chataur}
\address{Lamfa\\
Universit\'e de Picardie Jules Verne\\
33, rue Saint-Leu\\
80039 Amiens Cedex~1\\
         France}
\email{David.Chataur@u-picardie.fr}
\author{Martintxo Saralegi-Aranguren}
\address{Laboratoire de Math{\'e}matiques de Lens\\  
      EA 2462 \\
      Universit\'e d'Artois\\
         SP18, rue Jean Souvraz\\
          62307 Lens Cedex\\
         France}
\email{martin.saraleguiaranguren@univ-artois.fr}
\author{Daniel Tanr\'e}
\address{D\'epartement de Math{\'e}matiques\\
         UMR-CNRS 8524 \\
         Universit\'e de Lille\\
         59655 Villeneuve d'Ascq Cedex\\
         France}
\email{Daniel.Tanre@univ-lille.fr}
\thanks{The first author was supported by the research project ANR-18-CE93-0002  ``OCHOTO'' . 
The third author was partially supported by 
the Proyecto PID2020-114474GB-100
and 
the ANR-11-LABX-0007-01  ``CEMPI''}
\subjclass[2020]{55N33, 55M10, 55Q70, 14F43}
\keywords{Intersection homology; Intersection homotopy groups; Intersection Hurewicz theorem; Topological invariance; Pseudo-barycentric subdivision}
\renewcommand\l@subsection{\@tocline{2}{0pt}{2pc}{5pc}{}}
\renewcommand\l@subsubsection{\@tocline{3}{0pt}{4pc}{10pc}{}}
\theoremstyle{plain}
\newtheorem{theorem}{Theorem}
\newtheorem{conjecture}{Conjecture}
\newtheorem{proposition}{Proposition}[section]
\newtheorem{theoremb}[proposition]{Theorem}
\newtheorem{lemma}[proposition]{Lemma}
\newtheorem{corollary}[proposition]{Corollary}
\theoremstyle{definition}
\newtheorem{definition}[proposition]{Definition}
\newtheorem{example}[proposition]{Example}
\newtheorem{recall}[proposition]{Recall}
\theoremstyle{remark}
\newtheorem{remark}[proposition]{Remark}
\numberwithin{equation}{section}
\newcommand{\secref}[1]{Section~\ref{#1}}
\newcommand{\thmref}[1]{Theorem~\ref{#1}}
\newcommand{\propref}[1]{Proposition~\ref{#1}}
\newcommand{\lemref}[1]{Lemma~\ref{#1}}
\newcommand{\corref}[1]{Corollary~\ref{#1}}
\newcommand{\remref}[1]{Remark~\ref{#1}}
\newcommand{\exemref}[1]{Example~\ref{#1}}
\newcommand{\defref}[1]{Definition~\ref{#1}}
\def\R{{\mathbb R}}
\def\ov{\overline}
\def\cC{{\mathcal C}}
\def\cE{{\mathcal E}}
\def\cJ{{\mathcal J}}
\def\cO{{\mathcal O}}
\def\cS{{\mathcal S}}
\def\cU{{\mathcal U}}
\def\cV{{\mathcal V}}
\def\cW{{\mathcal W}}
\def\crG{{\mathscr G}}
\def\crX{{\mathscr X}}
\def\1{{\mathbf 1}}
\def\tc{{\mathtt c}}
\def\tu{{\mathtt u}}
\def\tv{{\mathtt v}}
\def\tw{{\mathtt w}}
\def\C{\mathbb{C}}
\def\N{\mathbb{N}}
\def\Q{\mathbb{Q}}
\def\R{\mathbb{R}}
\def\Z{\mathbb{Z}}
\def\RP{{\mathbb{R}{\rm P}}} 
\def\ab{{\mathrm{ab}}}
\def\id{{\rm id}}
\def\codim{{\rm codim\,}}
\def\sing{{\rm Sing}}
\def\sd{{\rm sd}}
\def\rc{{\mathring{\tc}}}
\def\menos{\backslash}
\newcounter{ejemplo}
\def\depth{{\mathrm{depth \,}}}
\newcounter{figura}
\def\top{{\mathbf{Top}}}
\def\top{{\mathrm{Top}}}
\def\Ab{{\mathrm{Ab}}}
\def\eps{\varepsilon}
 \newcommand{\inte}[1]{\mathring{#1}} 
\renewcommand\1{\hbox{\ding{192}}}
\begin{document} 
\begin{abstract} 
As Goresky and MacPherson intersection homology is not  the homology of a space, there is
no preferred candidate for intersection homotopy groups.
Here, they are defined as  the homotopy groups of a simplicial set  which P.~Gajer 
associates to a couple $(X,\ov{p})$ of a filtered space and a perversity.
We first establish some basic properties  for the intersection fundamental groups, as a Van Kampen theorem.

For general intersection homotopy groups on  Siebenmann CS sets, 
we prove a Hurewicz theorem between them and the Goresky and MacPherson intersection homology.
If the CS set  and its intrinsic stratification have the same regular part,
we establish the topological invariance of the $\overline{p}$-intersection homotopy groups.
Several examples justify the hypotheses made in the statements.
Finally, intersection homotopy groups also coincide with the homotopy groups of the 
topological space itself, for the top perversity  on a connected, normal Thom-Mather space. 
\end{abstract}


\maketitle

%

\section*{Introduction}

Poincar\'e duality is an extraordinary property of manifolds. Trivial examples show that this feature disappears
when the space in consideration presents some singularities, even in the case of an amalgamation of manifolds
of different dimensions as in a complex of manifolds of Whitney (\cite{MR19306}). 
Using an extra parameter $\ov{p}$ called perversity, Goresky and MacPherson \cite{GM1,GM2} 
define  new homologies depending on the choice of a perversity  and recreate a 
Poincar\'e duality between some spaces with singularities, as the pseudomanifolds.

\smallskip
Many invariants and structures of differential or algebraic topology also
 found their place in intersection homology or cohomology: 
 Morse theory, characteristic classes, Hodge theory, bordism, existence of cup products, foliations,...
(See \cite[Chapter 10]{FriedmanBook} for a  documented list.) 
From a homotopical point of view, the first observation is that  intersection homology
is not a homotopy invariant. 
On the other hand,   F.~Quinn (\cite{Qui}) has given a  presentation of some filtered spaces
which allows their study with homotopical tools. 
Here, what we have in mind is a notion of intersection homotopy groups 
which could be related to intersection homology, more or less as
 the homotopy and homology groups of a space are. 
 In this direction, the first question is:
 ``Let $X$ be a given filtered space and $\ov{p}$ be a perversity. 
 Does there exist a topological space $Y$ whose ordinary homology groups are the $\ov{p}$-intersection homology groups of $X$?''
As quoted by N. Habbegger  in the introduction of \cite{MR826504}, his study of Thom operations 
for intersection homology ``destroys the hope of calculating intersection homology 
as the ordinary homology of a suitable space, in general, since the Thom operations are natural.''

\smallskip
Nevertheless the well-established properties of the homology of a space encourage a search for substitutes, 
i.e. the search for spaces $I^{\ov{p}}X$, associated to a pseudomanifold $X$ and a
perversity $\ov{p}$, and which is   ``close'' to $X$  in a direction to be specified. 
This is the approach of M.~Banagl and we send the reader to \cite{MR2662593} 
for an explicit description of this procedure and its properties; we will not use them in this work.
 Briefly, the idea guiding Banagl's construction is the replacement of the singular links by 
 a truncation of their Moore space decomposition.
 In general, the homology of $I^{\overline{p}}X$ is not the Goresky and MacPherson intersection homology
 but satisfies a generalized Poincar\'e duality when $X$ is closed and oriented.
 The  difficulty lies in the construction of the spaces $I^{\ov{p}}X$.
 Their existence is established for pseudomanifolds with isolated singularities and 
 some cases of depth two pseudomanifolds in \cite{MR2662593,MR2928934,MR4243079}.
 In \cite{MR4170059}, their construction is carried out for arbitrary depth but with trivial link bundles, 
 which covers the case of toric varieties.
 
 \smallskip
Our approach is of a different spirit. We keep the Goresky and MacPherson intersection homology and
introduce intersection homotopy groups as homotopy groups of a space defined 
by Gajer in \cite{MR1404919,MR1489215}. 
To explain that, it is better to come back at the beginning of the story.
Given a filtered space $X$ and a perversity $\ov{p}$, a selection is made among the singular simplexes as follows.
A simplex $\sigma\colon \Delta\to X$ is  
\emph{$\ov{p}$-allowable}  
if, for each singular stratum $S$, the set $\sigma^{-1}S$ verifies
\begin{equation}\label{equa:admissible}
\dim \sigma^{-1}S\leq \dim\Delta-\codim S +\ov{p}(S).
\end{equation}
A $\ov{p}$-allowable chain is a linear combination of $\ov{p}$-allowable simplexes.
These definitions present major shortcomings. 
 \begin{enumerate}[i)]
 \item If $\xi$ is a $\ov{p}$-allowable chain, its boundary $\partial \xi$ is not necessary $\ov{p}$-allowable.
  \item If $\sigma$ is a $\ov{p}$-allowable simplex and $\partial_{i}$ is a face operator, 
 the simplex $\partial_{i}\sigma$ is not necessary $\ov{p}$-allowable.
 \end{enumerate}
 
 If we are interested by homology, we do a more restrictive choice at the level of chains: 
 a singular  chain $\xi$ is  of \emph{$\ov{p}$-intersection} 
 if $\xi$ and its boundary $\partial \xi$ are $\ov{p}$-allowable (\cite{GM1}).
The  homology of the complex of singular chains of $\ov{p}$-intersection, $C_{*}^{\ov{p}}(X;G)$,
is the \emph{$\ov{p}$-intersection homology}  $H_*^{\ov{p}}(X;G)$, with coefficients in an  $R$-module~$G$
 over a Dedekind ring $R$.

\smallskip
If we are interested by simplicial sets, 
we act on the simplexes themselves: a simplex $\sigma$ is \emph{$\ov{p}$-full} if $\sigma$ and all 
its iterated faces are $\ov{p}$-allowable. This is the approach of \cite{MR1404919}:
Gajer gets a  Kan simplicial set that we 
denote by $\crG_{\ov{p}}X$ and called \emph{the Gajer space.} 
The \emph{$\ov{p}$-intersection homotopy groups} are now defined as the homotopy groups of $\crG_{\ov{p}}X$
and denoted by $\pi^{\ov{p}}_{*}(X)$.
What we do, in the present paper, is the study of these homotopy groups and their relation with intersection homology. 
For the dimension of $\sigma^{-1}S$  in \eqref{equa:admissible},  we are dealing with the polyhedral dimension of \cite{MR1404919}
(see \defref{def:dimension}), revisited in \cite{CST8}.
In \cite{MR1404919}, Gajer falsely believed that the homology of $\crG_{\ov{p}}X$ is the
$\ov{p}$-intersection homology of $X$. 
So in \cite{MR1404919}, some properties as the existence of a Mayer-Vietoris sequence
for $H_{*}(\crG_{\ov{p}}X)$ with open subsets of $X$ appeared as a consequence of well-known properties of
intersection homology. 
Also we  work with topological filtered spaces and not PL-ones.
For these reasons, in \secref{sec:gajerspace},  we provide the proofs of basic properties, 
with  references to their first  occurrence if we find one.

\smallskip
It's time to clarify what we mean by perversity. In this work, a \emph{perversity} is a map from the set of strata of a filtered space with values in $\Z\cup\{\pm\infty\}$,
taking the value 0 on the regular strata.
In particular, any map $f\colon \N\to\Z$ such that $f(0)=0$ defines a perversity $\ov{p}$ 
 by $\ov{p}(S)=f(\codim S)$.  Such perversities are said \emph{codimensional}. 
 Among them are the original GM-perversities (\cite{GM1}) of Goresky and MacPherson, see \defref{def:perversite},
 and the top perversity defined by $\ov{t}(i)=i-2$, for $i\geq 2$. 

\smallskip
In the case of a filtered space $X$ and a perversity $\ov{p}$, we establish a \emph{Van Kampen theorem} for $\pi^{\ov{p}}_{1}(X)$ in \thmref{thm:VK}.
If $X$ is a connected normal Thom-Mather space with a finite number of strata and $\ov{t}$ is the top perversity, 
we deduce, in \corref{cor:thommather},
the isomorphisms, $\pi_{j}^{\ov{t}}(X)\cong \pi_{j}(X)$, for any $j$.
This result enhances the well-known homology result of intersection homology, $H_{*}^{\ov{t}}(X)\cong H_{*}(X)$.

\smallskip
Our main results concern the locally conical filtered spaces, introduced by Siebenmann in \cite{MR0319207}, 
and called CS sets, see \defref{def:csset}. 
One of their first delicate peculiarities 
is that links of points in the same stratum are 
not necessarily homeomorphic (\cite[Example 2.3.6]{FriedmanBook}). 
We know that these links have isomorphic intersection homology groups,
and we prove here that they also have isomorphic intersection homotopy groups, cf. \propref{prop:LinkUnique}.

\smallskip
In \exemref{ex:poincaresphere}, we point out that the double suspension of the Poincar\'e sphere 
is a counter-example to the topological invariance of $\ov{p}$-intersection fundamental groups.
In this example, some singular points of the CS set $X$ become regular in the intrinsic stratification
$X^*$ of $X$, introduced by King in \cite{MR800845}.
The next  result (see \thmref{thm:thepione} and \thmref{thm:homotopyinvariance}) shows that, 
except this generic case, the perverse homotopy groups are topological invariants. 

\begin{theorem}\label{thm:thepioneintro}
Let $X$ be a CS set without stratum of codimension 1, of intrinsic stratification $\nu\colon X\to X^*$ and
$\ov{p}$ be a Goresky and MacPherson perversity.  
We suppose that the regular parts of $X$ and $X^*$ coincide. 
Then the map $\nu$ induces  isomorphisms,
$\pi_{j}^{\ov{p}}(X,x)\cong \pi_{j}^{\ov{p}}(X^*,x)$,
for any regular point $x$ and any $j$.
\end{theorem}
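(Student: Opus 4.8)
The plan is to reduce the statement to a comparison of Gajer spaces and then to a local, link-by-link analysis, mirroring the classical strategy for topological invariance of intersection homology but carried out at the level of the Kan simplicial sets $\crG_{\ov{p}}X$ rather than the chain complexes $C_*^{\ov{p}}(X)$. First I would observe that, since the regular parts of $X$ and $X^*$ agree, the intrinsic stratification map $\nu\colon X\to X^*$ is a stratified homotopy equivalence on the regular part and is a coarsening of stratifications everywhere; in particular $\nu$ is the identity on underlying spaces and the base point $x$ is regular in both. The map $\nu$ induces a simplicial map $\crG_{\ov{p}}(X)\to\crG_{\ov{p}}(X^*)$ because every $\ov{p}$-full singular simplex for the finer stratification $X$ is automatically $\ov{p}$-full for $X^*$: a singular stratum $S^*$ of $X^*$ pulls back to a union of strata of $X$ of the same or smaller codimension, and the GM-perversity hypothesis together with the polyhedral dimension estimate of \defref{def:dimension} gives the required inequality \eqref{equa:admissible} for $\sigma^{-1}S^*$ from those for the pieces. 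So the content is that this map is a weak homotopy equivalence of Kan complexes, equivalently induces isomorphisms on all $\pi_j$.

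The next step is to set up the induction on depth. Both $X$ and $X^*$ are CS sets, and $\nu$ restricts over each stratum to a coarsening; the depth of $X^*$ is no larger than that of $X$. I would prove the isomorphism on $\pi_j$ by induction on the depth of $X$, the base case (depth $0$, i.e. a manifold) being trivial since $\nu$ is then a homeomorphism. For the inductive step I want a Mayer–Vietoris / Van Kampen type decomposition of $\crG_{\ov{p}}$ compatible with open covers — for $\pi_1$ this is precisely \thmref{thm:VK}, and for higher $\pi_j$ one needs the homotopy pushout / excision statement for Gajer spaces established in \secref{sec:gajerspace}. Using a cover of $X$ by a distinguished neighborhood of a point in the deepest stratum (of the form $\R^k\times \mathring{c}L$, with $L$ the link) and the complement of the deepest stratum, and the analogous cover of $X^*$, the problem localizes to: (a) the regular part, where $\nu$ is a homotopy equivalence and the inductive hypothesis applies stratum by stratum away from the top singular stratum; and (b) a cone neighborhood $\R^k\times\mathring{c}L$, where by the cone formula for Gajer spaces (the homotopy analogue of the intersection-homology cone formula, which I would either quote from \secref{sec:gajerspace} or prove via the contraction of the cone) one reduces to the link $L$, a CS set of strictly smaller depth. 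Here one must check that the link of a point of the deepest stratum of $X$ maps, under $\nu$, to the corresponding link in $X^*$ with coinciding regular parts — this follows from \propref{prop:LinkUnique} and the fact that $\nu$ restricted to a distinguished neighborhood is again an intrinsic-type coarsening. The induction then closes.

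The main obstacle, and where the GM-perversity and the "same regular part" hypotheses are genuinely used, is the cone formula at the boundary between truncation ranges. In intersection homology the cone formula reads $H_j^{\ov{p}}(\mathring{c}L)=H_j^{\ov{p}}(L)$ for $j\le \ov{t}(\,\cdot\,)-\ov{p}(\,\cdot\,)$ and $0$ above, and the delicate point in the homotopy setting is that $\crG_{\ov{p}}(\mathring{c}L)$ is \emph{not} simply a Postnikov truncation of $\crG_{\ov{p}}(L)$ — fullness of a simplex is a condition on all its iterated faces, so the cone point interacts with low-dimensional faces of high-dimensional simplexes. I expect the heart of the argument to be a careful cone lemma for Gajer spaces showing $\pi_j(\crG_{\ov{p}}(\R^k\times\mathring{c}L),x)$ depends only on $\pi_{\le \star}(\crG_{\ov{p}}(L))$ in a range controlled by $\ov{p}$ and $\ov{t}$, with the GM bound $0\le\ov{p}\le\ov{t}$ ensuring this range is the same for $X$ and for $X^*$; the hypothesis that the regular parts coincide guarantees no stratum of $X$ of codimension making $\ov{p}$ negative or exceeding $\ov{t}$ is "created or destroyed" by passing to $X^*$, so the comparison of truncation ranges on the two sides is exact. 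Once the cone lemma is in place, the Van Kampen theorem and its higher analogue assemble the local equivalences into the global isomorphism $\nu_*\colon\pi_j^{\ov{p}}(X,x)\xrightarrow{\ \cong\ }\pi_j^{\ov{p}}(X^*,x)$.
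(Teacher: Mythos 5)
Your reduction to the Gajer spaces and the overall induction-on-depth skeleton are the right frame, but the pivotal step of your plan does not exist: homotopy groups do not satisfy excision or Mayer--Vietoris, and there is no ``homotopy pushout / excision statement for Gajer spaces'' in \secref{sec:gajerspace} --- \thmref{thm:MV} and \thmref{thm:excision} are statements about \emph{homology with local coefficients}. So the proposed localization of the higher $\pi_{j}$ over a cover of $X$ by a distinguished neighborhood and the complement of the deep stratum cannot be assembled as written. The missing idea, which is the actual engine of the paper's proof, is Quillen's criterion (\lemref{lem:quillen} and its truncated form \lemref{lem:quillenfini}): one first proves the isomorphisms on $\pi_{0}^{\ov{p}}$ and $\pi_{1}^{\ov{p}}$ (this is \thmref{thm:thepione}, proved not by Van Kampen but by a direct modification of a $2$-simplex in $X^*$ into a $\ov{p}$-allowable $2$-simplex in $X$ using a \emph{source stratum} of $S^{\nu}$, which exists precisely because there is no exceptional stratum), and then the comparison of all higher homotopy groups is reduced to an isomorphism in homology with arbitrary local coefficient systems, where Mayer--Vietoris, excision, the increasing-union argument and the Zorn/King induction do apply.

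The second gap is in your local step. You reduce the chart comparison to ``the link $L$, of smaller depth'' and invoke \propref{prop:LinkUnique}, but that proposition compares links of two points in the same stratum of one CS set; it says nothing about the relation between a link of $X$ and a link of $X^*$ at the same point. Around a point of a deep stratum the chart of $X$ is $\R^{k}\times \rc W$ while the chart of $X^*$ is $\R^{m}\times \rc L$ with $m\geq k$ and $\dim W\geq \dim L$, and the comparison map is only a homeomorphism that is stratified in one direction, so the two cone formulas do not line up stratum by stratum. This mismatch is exactly what \propref{prop:invarianceopensubsetnew} handles: one excises, uses that $A$ (where $\R^{m}\cong \R^{k}\times \rc A$) is a homology sphere to split off the correction summand in \eqref{equa:directsum}, kills it in the range $i\leq D\ov{p}(s+1)$ via the GM-perversity inequality $D\ov{p}(s+1)\leq (s-t)+D\ov{p}(t+1)$ and the cone vanishing of \propref{prop:cone}, and then concludes with \lemref{lem:quillenfini}; above that range both $\pi_{i}^{\ov{p}}$ vanish by the cone formula \propref{prop:coneandhomotopy}. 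Your ``cone lemma'' paragraph correctly senses that the truncation ranges must agree on the two sides, but without the homology-sphere splitting and the GM inequality the comparison of the two differently-shaped cones is not established, so the induction does not close as proposed.
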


We also establish a $\ov{p}$-intersection analog of the Hurewicz theorem 
(\thmref{thm:hurintersection}) between intersection homotopy groups and Goresky and MacPherson intersection homology groups.
 \exemref{exem:pashurewicz}   justifies the hypotheses put on the links.
 We call \emph{$\ov{p}$-intersection Hurewicz homomorphism}, 
 $$h^{\ov{p}}_{*}\colon \pi^{\ov{p}}_{*}(X,x)\to H_{*}^{\ov{p}}(X;\Z),$$
 the composition of the Hurewicz map for $\crG_{\ov{p}}X$ with the homomorphism 
 $H_{*}(\crG_{\ov{p}}X;\Z)\to H_{*}^{\ov{p}}(X;\Z)$ coming 
 from  the inclusion between the corresponding chain complexes.

\begin{theorem}\label{thm:HureIntersectionHomotopyHomologyintro}
Let $X$ be a  CS set and $\ov{p}$ be a perversity such that $\pi_{0}^{\ov{p}}(X)=0$.
\begin{enumerate}[i)]
\item For any regular point $x$, the $\ov{p}$-intersection Hurewicz map 
$h^{\ov{p}}_{1}\colon \pi^{\ov{p}}_{1}(X,x)\to \widetilde{H}_{1}^{\ov{p}}(X;\Z)$
induces an isomorphism between the abelianisation of $\pi_{1}^{\ov{p}}(X,x)$ and $\widetilde{H}_{1}^{\ov{p}}(X;\Z)$.
\item
Let $k\geq 2$. 
 We suppose  
$\pi_{j}^{\ov{p}}(X)=\pi_{j}^{\ov p}(L)=0$ for every link  $L$ of $X$, and each $j\leq k-1$.
Then, the intersection Hurewicz homomorphism
$h_{j}^{\ov{p}}\colon \pi_{j}^{\ov{p}}(X,x_{0})\to \widetilde{H}_{j}^{\ov{p}}(X;\Z)$
 is an isomorphism for $j\leq k$ and a surjection for $j=k+1$.
\end{enumerate}
\end{theorem}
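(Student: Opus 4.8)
The plan is to reduce everything to the classical Hurewicz theorem applied to the Kan simplicial set $\crG_{\ov p}X$, and then to control the difference between the homology of $\crG_{\ov p}X$ and the Goresky--MacPherson intersection homology $H_*^{\ov p}(X;\Z)$. Recall that by definition $\pi_j^{\ov p}(X,x)=\pi_j(\crG_{\ov p}X,x)$, and there is a natural inclusion of chain complexes from the normalized chains on $\crG_{\ov p}X$ (whose simplices are the $\ov p$-full singular simplices) into the complex $C_*^{\ov p}(X;\Z)$ of $\ov p$-intersection chains, inducing $\alpha_*\colon H_*(\crG_{\ov p}X;\Z)\to H_*^{\ov p}(X;\Z)$. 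The $\ov p$-intersection Hurewicz homomorphism is $h_j^{\ov p}=\alpha_j\circ h_j$, where $h_j$ is the ordinary Hurewicz map of $\crG_{\ov p}X$. So the proof splits into two independent ingredients: (1) the ordinary Hurewicz theorem for the Kan complex $\crG_{\ov p}X$, and (2) a comparison statement showing $\alpha_j$ is an isomorphism (resp.\ surjection) in the relevant range.

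For part~i), step (1) is the classical fact that $h_1\colon \pi_1(\crG_{\ov p}X,x)\to \widetilde H_1(\crG_{\ov p}X;\Z)$ identifies $\widetilde H_1$ with the abelianization; this needs only that $\crG_{\ov p}X$ is connected (equivalently $X$ is, using that regular points form a dense connected set and any point is joined to a regular one by an allowable path — this should already follow from the basic properties collected in \secref{sec:gajerspace}) and that $x$ is a vertex, which a regular point is. Step (2) in degree~$1$ amounts to showing $\alpha_1\colon H_1(\crG_{\ov p}X;\Z)\to H_1^{\ov p}(X;\Z)$ is an isomorphism. Here I would argue that the inclusion of the subcomplex of ``$\ov p$-full'' chains into the complex of $\ov p$-intersection chains is a quasi-isomorphism in low degrees by a subdivision/pushing argument: given a $\ov p$-intersection $1$-cycle, its simplices are $\ov p$-allowable but perhaps not $\ov p$-full; one makes them full (and controls the added $2$-chain) by a pseudo-barycentric subdivision as developed in \cite{CST8}, using that in low degrees the full subcomplex already carries all the homology. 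For part~ii), step (1) is the classical Hurewicz theorem: since $\pi_j(\crG_{\ov p}X)=0$ for $j\le k-1$, the map $h_j$ is an isomorphism for $j\le k$ and onto for $j=k+1$. Step (2) requires $\alpha_j$ to be an isomorphism for $j\le k$ and a surjection for $j=k+1$; the vanishing hypotheses on the links $\pi_j^{\ov p}(L)=0$ for $j\le k-1$ are exactly what one needs to run a Mayer--Vietoris / local-to-global induction over the conical charts of the CS set comparing $H_*(\crG_{\ov p}-)$ with $H_*^{\ov p}(-)$: on a cone $\mathring c L$ one computes both sides via the cone formula and the inductive hypothesis on $L$, and on an open subset meeting no singular stratum both complexes agree.

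The main obstacle I expect is step (2), the comparison $\alpha_j$, rather than the homotopy-theoretic step (1). The subtlety is precisely the two ``shortcomings'' flagged in the introduction: the face of a $\ov p$-full simplex need not be $\ov p$-full, so the full subcomplex is not obviously quasi-isomorphic to the intersection complex, and naive subdivision can destroy fullness or fail to preserve allowability dimension estimates for the polyhedral dimension (\defref{def:dimension}). Making the subdivision argument work — turning an arbitrary $\ov p$-intersection cycle into a $\ov p$-full one, with an explicit $\ov p$-full chain realizing the homology — is where the technical weight lies, and it is here that the pseudo-barycentric subdivision of \cite{CST8} and the CS-set structure (local conical charts, uniqueness of links up to intersection homotopy from \propref{prop:LinkUnique}) must be combined. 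Once that local comparison is in hand, patching it to the global statement is a standard Mayer--Vietoris induction on the depth of $X$, valid in the stated range because the link hypotheses feed the inductive step. Finally, the reduced versions $\widetilde H_j^{\ov p}$ and the basepoint conventions are handled exactly as in the non-perverse case, splitting off the degree-$0$ part using connectedness of $X$.
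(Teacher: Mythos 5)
Your plan is essentially the paper's proof: factor $h^{\ov p}_*$ as the classical Hurewicz map of the Kan complex $\crG_{\ov p}X$ followed by the comparison map $H_*(\crG_{\ov p}X;\Z)\to H_*^{\ov p}(X;\Z)$, prove the latter is an isomorphism in degrees $0,1$ by pseudo-barycentric subdivision and general-position adjustments, and in higher degrees run a cone-chart/Mayer--Vietoris/depth induction in which the link hypotheses $\pi_j^{\ov p}(L)=0$ feed the cone step (via the classical Hurewicz theorem applied to $\crG_{\ov p}L$ and $\crG_{\ov p}(\rc L)$). You correctly identify where the technical weight lies (the degree-$1$ injectivity and the local-to-global patching), which is exactly what the paper's Theorems on $\cJ^{\ov p}_{\le 1}$ and the property $P_k$ carry out.
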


Some natural questions on $\ov{p}$-intersection homotopy groups follow from these results. 
Let us list some of them.

\smallskip $\bullet$
In the case of a cone on a compact space, $\rc X$, of apex $\tv$, 
with the conical stratification, $\{\tv\}\subset \rc X$, one has (\propref{prop:coneandhomotopy})
$$\crG_{\ov{p}}  (\rc X)=P_{k}\sing X,$$
where $P_{k}$ denotes the $k$th space of the Postnikov decomposition of $\sing X$
and $k=D\ov{p}(\tv)$. In this particular case,
the Gajer space appears as an Eckmann-Hilton dual (\cite[Section 4.H]{MR1867354})
of Banagl's intersection space, $I^{\ov{p}}X$: 
the (co)truncation of the  Moore decomposition is replaced by the truncation of the Postnikov tower.
In \cite{CST10}, we  come back to this point of view  and extend this relation to more general spaces than cones.

\smallskip $\bullet$
By using an adaptated version of the PL forms of D. Sullivan, we present in  \cite{CST1} 
 a notion of perverse minimal model. In a future work, we will connect
the indecomposables of this model with the intersection homotopy groups.
This will bring a new notion of $\ov{p}$-formality for a fixed GM-perversity $\ov{p}$.

\smallskip $\bullet$
Let's extend this list with: 
``Can we relate the perverse Eilenberg-MacLane spaces introduced in \cite{CT1} with the
intersection homotopy groups?''
and ``Is there a relation to perverse sheaves?''

\smallskip $\bullet$
Let us conclude with a question about small resolutions, which has its source in \cite[Section 6.2]{GM2}.
Recall that a small resolution of a complex algebraic variety is an algebraic map,
$f\colon V\to X$,
which is a resolution of singularities of $X$ such that, for all $r>0$, 
$$\codim_{\C}\{x\in X\mid \dim_{\C }f^{-1}(x)\geq r\}>2r.$$
In \cite{GM2}, the authors prove that a small resolution $f$ induces  isomorphisms,
$f_{*}\colon H_{*}(V)\cong H_{*}^{\ov{m}}(X)$,
between the (ordinary) homology of $V$ and the intersection homology of $X$ for the middle perversity $\ov{m}$.
When they do exist,  small resolutions are not necessarily unique and the previous result implies that the homology 
groups $H_{*}(V)$ do not depend on the small resolution of $X$. Thus we ask if there are similar results
for the intersection homotopy groups. 
For instance, under some connectedness conditions, can we find an integer $k$ such that
the homotopy groups $\pi_{j}(V)$ do not depend on the small resolution $V$, for $j\leq k$. 
More precisely, we make the following conjecture.
\begin{conjecture}
Let $f\colon V\to X$ be a small resolution. Suppose that $V$ and $X$ are simply connected and 
that there exists an integer $k$ such that $\pi_{j}(L)=0$ for all links and all $j<k$. Then, we have an isomorphism
$$f_{*}\colon \pi_{j}(V)\to \pi_{j}^{\ov{m}}(X), \quad\text{for all} \quad j\leq k.$$
\end{conjecture}
\noindent
In the case of isolated singular points of a local complete intersection of complex dimension $n$, this 
 conjecture implies the existence of isomorphisms, $\pi_{j}(V)\cong \pi_{j}^{\ov{m}}(X)$, 
 for any $j\leq n-1$ and any small resolution
$V$ of $X$.

\medskip
\paragraph{{\bf Notation and convention.}}
The word ``space'' means compactly generated topological space.
When we work on CS sets that are locally path-connected spaces, we use ``connected''  and  ``path-connected'' interchangeably.
In the text, the letter $R$ denotes a Dedekind ring and $G$ an $R$-module. 
The singular chain complex of a  simplicial set, $X$,  is denoted by
$C_*(X;G)$, $C_{*}(X;\cE)$ or $C_{*}(X)$ if there is no ambiguity.

We denote by $\Delta[\ell]$ the simplicial set whose $k$-simplexes are the $(k+1)$-uple of integers
$(j_{0},\dots,j_{k})$ with $0\leq j_{0}\leq\dots\leq j_{k}\leq \ell$, and by
$d_{i}\colon \Delta[\ell]_{k}\to \Delta[\ell]_{k-1}$,
$s_{i}\colon \Delta[\ell]_{k}\to \Delta[\ell]_{k+1}$,
its faces and degeneracies, for $i\in\{0,\dots,k\}$.
Its geometric realization  is the  subspace of $\R^{\ell+1}$
defined by
$\Delta^{\ell}=\{(t_{0},\dots,t_{\ell})\mid \sum_{i=0}^{\ell}t_{i}=1, \,t_{i}\geq 0\}$.
We denote by $\inte\Delta^\ell=\Delta^\ell\menos \partial \Delta^\ell$ the interior of $\Delta^\ell$.
The $k$th horn, $\Lambda[\ell,k]$, of  $\Delta[\ell]$ is obtained from the boundary 
$\partial \Delta[\ell]$ by removing the $k$th face.
 Its geometric realization is denoted by $\Lambda^{\ell}_{k}$.

The family $\Delta^{\bullet}$ is a cosimplicial space with cofaces and codegeneracies,
$d^i\colon \Delta^{\ell-1}\to \Delta^{\ell}$ and
$s^i\colon \Delta^{\ell+1}\to \Delta^{\ell}$,
for $i\in\{0,\dots,\ell\}$.
The image $d^i\Delta^{\ell-1}$ is called the \emph{$i$-face} of $\Delta^\ell$ and denoted by $\partial_{i}\Delta^\ell$.
If $\sigma\colon \Delta^{\ell}\to X$ is a singular simplex  
the precomposition of $\sigma$ with $d^i$ is denoted by
$\partial_{i}\sigma=\sigma\circ d^i$.
The domain of  $\sigma$ is denoted by
 $\Delta^\ell$ 
or  $\Delta_{\sigma}$, or simply $\Delta$,
depending on the parameter concerned at this place.

%
Our program is carried out in Sections 1-6 below, whose headings are self- explanatory.
\tableofcontents

%
\centerline{\textsc{ Acknowledgments}}

\medskip
The authors are grateful to the referees for their valuable comments and suggestions.

%

\section{Background on filtered spaces and intersection homology}


The singular spaces considered by Goresky and MacPherson in their pioneer works
 (\cite{GM1,GM2}), are  \emph{pseudomanifolds}.
 They allow a Poincar\'e duality through intersection homology, based on a parameter called perversity.
The singular spaces presented in this section are general filtered spaces and the  CS sets  of Siebenmann 
(\cite{MR0319207}) well adaptated to proofs by induction. 
We also make use of  perversities defined  on the poset of strata.

\subsection{Filtered spaces and CS sets}\label{sec:pseudo}
 We denote by $\top$ the category of compactly generated spaces (henceforth called ``space'') 
 with arrows the continuous maps.
Let us  emphasize that our definition of compact space includes the Hausdorff property. We now enter in the filtered world.

\begin{definition}\label{def:filteredspace}
A \emph{filtered space of formal dimension $n$} is a non-empty space $X$ 
with a filtration,
$$
X_{-1}=\emptyset\subseteq X_{0} \subseteq X_{1}\subseteq \dots \subseteq X_{n-2} \subseteq X_{n-1} \subsetneq X_{n} =X,
$$
by closed subsets. 
The subspaces $X_{i}$ are called \emph{skeleta} of the filtration and the 
non-empty components of $X^{i}=X_{i}\menos X_{i-1}$ are the \emph{strata}  of $X$. 
The set of strata is denoted by $\cS_{X}$ (or $\cS$ if there is no ambiguity).
The subspace $\Sigma_{X}=X_{n-1}$ (or $\Sigma$)  is called the \emph{singular subspace.}
Its complementary, $X\menos \Sigma_{X}$, is the \emph{regular subspace} and the
strata in $X\menos \Sigma_{X}$ are called \emph{regular.} 
The formal dimension of a stratum $S\subset X^{i}$ is $\dim S=i$; its formal codimension is
$\codim S=n-i$.
(The formal dimension is not necessarily related to  any topological notion of dimension.)
The filtered space is of \emph{locally finite stratification} if every point has a neighborhood that intersects only a finite number of strata.

A \emph{stratified space} is a filtered space such that the closure of any stratum is a union of strata of lower dimension.
The set of strata of a stratified space is a poset, $(\cS,\prec)$, for the relation
$S_{0}\prec S_{1}$ if $S_{0}\subset \ov{S}_{1}$.
The \emph{depth of $X$}    is the greatest integer $m$ for which it exists a chain of strata,
 $S_{0}\prec S_{1}\prec \cdots\prec S_{m} $.
 We denote it by $\depth X=m$.
\end{definition}

Let $X$ be a filtered space of formal dimension $n$.
An open subset $U \subset X$ is a filtered space for the \emph{induced filtration} given by
$U_i = U \cap X_{i}$.
The product $Y\times X$ with  a  space $Y$  is a filtered space for the \emph{product filtration} defined by
$\left(Y \times X\right) _i = Y \times X_{i}$. 
 If $X$ is compact, the open cone 
$\rc X = X \times [0,1[ \big/ X \times \{ 0 \}$ is endowed with the 
\emph{conical filtration} defined by
$\left(\rc X\right) _i =\rc X_{i-1}$,  $0\leq i\leq n+1$. 
By convention, 
$\rc \,\emptyset=\{ \tv \}$, where $\tv=[-,0]$ is the apex of the cone.

\begin{definition}\label{def:startifiedmap}
A \emph{stratified map} $f\colon X\to Y$ between two filtered spaces is a continuous map such that
for each stratum $S$ of $X$ there exists a stratum $S^f$ of $Y$ with $f(S)\subset S^f$
and $\codim S^f\leq \codim S$.
A \emph{stratified homeomorphism} is a homeomorphism such that $f$ and $f^{-1}$ are stratified maps.
We denote this relation by $X\cong_{s} Y$.
\end{definition}

In this text, we will frequently encounter  stratified maps $f$ which are homeomorphisms but for which the reverse application 
$f^{-1}$ is not stratified. We call them \emph{homeomorphisms and stratified maps.}  
It is fundamental not to confuse them with stratified homeomorphisms.

\begin{definition}\label{def:homotopyequistrat}
Let $X,\,Y$ be filtered spaces, we endow the cylinder $X\times [0,1]$ with the product filtration,
$(X\times [0,1])_{i}=X_{i}\times [0,1]$. 
Two stratified maps, $f,\,g\colon X\to Y$, are \emph{stratified homotopic} if there exists a stratified map 
$H\colon X\times [0,1]\to Y$ such that
$f=H(-,0)$ and $g=H(-,1)$. We denote this relation by $f\simeq_{s} g$.
Two filtered spaces, $X$ and $Y$, are \emph{stratified homotopy equivalent} 
if there exist stratified maps 
$f\colon X\to Y$ and $g\colon Y\to X$ such that
$g\circ f \simeq_{s} \id_{X}$, $f\circ g \simeq_{s} \id_{Y}$ and 
if $\codim S=\codim S^f$, $\codim T=\codim T^g$,
for any strata $S$ of $X$ and $T$ of $Y$.
We denote this relation by $X \simeq_{s} Y$ and the maps $f$ and $g$ are called 
\emph{stratified homotopy equivalences.}
\end{definition}
A stratified homotopy equivalence, $f\colon X\to Y$, induces a bijection between the two sets of strata, 
$\cS_{X}\cong \cS_{Y}$, and verifies $S^{g\circ f}=S$, $T^{f\circ g}=T$
see \cite[Remark 2.9.11]{FriedmanBook}.
Let us end these recalls with the CS sets, introduced by  Siebenmann in \cite{MR0319207}.

\begin{definition}\label{def:csset}
A \emph{CS set} of dimension $n$ is a filtered space $X$ of dimension $n$,
whose $i$-dimensional strata are $i$-dimensional topological manifolds for each $i$, and
such that for each point $x \in X_i \backslash X_{i-1}$, $i\neq n$, there exist
 an open neighborhood $V$ of  $x$ in $X$, endowed with the induced filtration,
an open neighborhood $U$ of $x$ in $X_i\backslash X_{i-1}$, 
a compact filtered space, $L$, of dimension $n-i-1$,
and a stratified homeomorphism $\varphi \colon U \times \rc L\to V$ such that
\begin{equation}\label{equa:filteredlink}
\varphi(U\times \rc L_{j})=V\cap X_{i+j+1},
\end{equation}
 for each $j\in \{0,\dots,n-i-1\}$.
 If $\tv$ is the apex of the cone $\rc L$, the homeomorphism
 $\varphi$ is also required to verify $\varphi(u,\tv)=u$, for each $u\in U$.
The pair  $(V,\varphi)$ is a   \emph{conical chart} of $x$
 and the filtered space $L$ is a \emph{link} of $x$.
 The CS set $X$ is called  \emph{normal} if its links are connected.
\end{definition}

From the definition, we  see that a CS set has a topological dimension. Moreover, in a CS set, the notions of formal and topological dimension coincide.

\begin{remark}\label{rem:CSmanifold}
If $X$ is a manifold with a structure of CS set, then any link of $X$ is a homology sphere. 
So, if the CS set $X$ has no codimensional 1 strata, it is normal.
Let us see that. 

Given a link $L$ we have a stratified homeomorphism $\varphi \colon \R^k \times \rc L \to V$, 
where $V$ is an open subset of $X$. Without loss of generality, we can suppose that $V$ is homeomorphic to 
an open subset of $\R^n$. 
So, we have a topological embedding $\psi \colon  \R^k \times \rc L=\rc (S^{k-1}*L)  \to \R^n = \rc S^{n-1}$ 
preserving the apexes. Following Stalling's invertible cobordism, we get a homeomorphism $ \rc (S^{k-1}*L) = \rc S^{n-1}$
 preserving the apexes (\cite[Proposition 1]{MR800845}) and thus $L$ is a homology sphere.
\end{remark}

There can be some differences in the various definitions of CS sets in the literature. 
In \defref{def:csset}, the links of singular strata are supposed to be non-empty, 
 thus the open subset $ X_ {n} \backslash X_ {n-1} $ is dense.  
 This implies that, for each link $L$, the regular part, $L\menos \Sigma_{L}$, is dense in $L$.
 The links of a stratum are not uniquely defined but if  one of them is connected, so all of them are too
(\cite[Remark 2.6.2]{FriedmanBook}).
A CS set   is a stratified space (\cite[Appendix A2]{CST1}) and its strata are locally closed 
and form a locally finite family (\cite[Lemma 2.3.8]{FriedmanBook}).

\begin{remark}\label{rem:conexenormal}
Proceeding as in \cite[Lemma 2.6.3]{FriedmanBook}, one can prove that
 a normal CS set $X$ is connected if, and only if, its regular part, $X\menos \Sigma$, is connected. We can also
 prove that $X$ is normal if, and only if, the regular part $L\menos \Sigma_L$ of any link $L$ is connected.
 \end{remark}

%

\subsection{Perversities}\label{sec:perversity}
Intersection homology of Goresky and MacPherson is defined from a parameter, called perversity. 
The original ones introduced in \cite{GM1} depend only on the codimension of the strata.
More general perversities  (\cite{MacPherson90,FriedmanBook, MR1245833, MR2210257}) 
are defined on the set of strata. 
In \cite{CST4}, a  blown-up cohomology is defined in this setting, to establish a
Poincar\'e duality for pseudomanifolds with a cap product in \cite{CST2,ST1}.
Let us recall their definitions.

\begin{definition}\label{def:perversitegen}
A \emph{perversity on a filtered space, $X$,} is a map $\ov{p}\colon \cS_{X}\to \ov{\Z}={\Z}\cup \{\pm\infty\}$
taking the value 0 on the regular strata. The pair $(X,\ov{p})$
is called a \emph{perverse space}, or a \emph{perverse CS set} if $X$ is a CS set. 

A \emph{constant perversity} $\ov k$, with $k\in \ov \Z$, is defined by $\ov k(S)=k$ for any singular stratum $S$.
The \emph{top perversity}  $\ov t$ is
 defined by $\ov{t}(S)=\codim S-2$, if $S$ is a singular stratum. 
 Given a perversity $\ov p$ on $X$,  the \emph{complementary perversity} on $X$, $D\ov{p}$, is characterized 
 by $D\ov{p}+\ov{p}=\ov{t}$.

\smallskip
Any map $f\colon \N\to\Z$ such that $f(0)=0$ defines a perversity $\ov{p}$ 
 by $\ov{p}(S)=f(\codim S)$. Such perversity is called \emph{codimensional}. 
In general, we denote by the same letter the perversity $\ov{p}$ and the map $f$.
Among the codimensional perversities we find the original perversities of  \cite{GM1}.
\end{definition}

\begin{definition}\label{def:perversite}
A \emph{Goresky and MacPherson perversity (or GM-perversity)} is a map 
$\ov{p}\colon \{2,\,3,\dots\}\to\N$ such that  
$\ov{p}(2)=0$ and
$\ov{p}(i)\leq\ov{p}(i+1)\leq\ov{p}(i)+1$ for all $i\geq 2$. 
\end{definition}
When using a GM-perversity, the filtered spaces under consideration have no 1-codimensional strata.
If $\ov{p}$ is a GM-perversity, so is its complementary.

\begin{definition}\label{def:backperversity}
Let $f\colon X\to Y$ be a stratified map and $\ov{p}$ be a perversity on $Y$. The \emph{pullback perversity of $\ov{p}$ by $f$}
is the perversity $f^*\ov{p}$ on $X$ defined by $f^*\ov{p}(S)=\ov{p}(S^f)$ for any stratum $S$ of $X$.
In the case of the canonical injection of an open subset endowed with the induced filtration, $\iota\colon U\to Y$, 
we still denote by $\ov{p}$ the perversity $\iota^*\ov{p}$ and call it the \emph{induced perversity.}
\end{definition}

The  product with a topological space, $X\times M$, is endowed with
the pull-back perversity of $\ov{p}$ by the canonical projection $X\times M\to X$, also denoted by $\ov{p}$. 
If $X$ is compact, a perversity $\ov{q}$ on the open cone  $\rc X$ induces a perversity on $X$,
also denoted by $\ov{q}$ and defined by
$\ov{q}(S)=\ov{q}(S\times ]0,1[)$. 

\begin{remark}\label{rem:perverselink}
A perversity $\ov{p}$ defined on a CS set, $X$,  induces a perversity on any link $L$.
Let $ \varphi\colon \R^k\times \rc L\to V$ be a conical chart of $x\in S\in \cS_{X}$.
Then, $V\menos S$ is an open subset of $X$ and is given with the induced perversity still denoted by $\ov{p}$. 
The strata of $V\menos S$ are the products $\R^k\times ]0,1[\times T$, where $T$ is a stratum of $L$. We set
$\ov{p}(T)=\ov{p}(\R^k\times [0,1[\times T)$ which defines a perversity on $L$.
There is also a relation between the perversity on $S$ itself and the conical chart by setting $\ov{p}(\tv)=\ov{p}(S)$.
\end{remark}

\subsection{Intersection homology}\label{sec:lesdeux}

Let $(X,\ov{p})$ be an $n$-dimensional perverse space.
The starting point  in intersection homology is the use of the perversity  for making
a selection among singular simplexes. 
 Before stating it, we   specify the notion of dimension we are using 
 for  subspaces of a Euclidean simplex. 
 We employ the polyhedra containing it (see \cite{MR0350744} for the used properties on polyhedra).

\begin{definition}\label{def:dimension}
A subspace $A\subset \Delta$ of a Euclidean simplex is of \emph{polyhedral dimension} less than or equal to $\ell$
 if  $A$ is included in a polyhedron $Q$ with $\dim Q\leq \ell$. This definition verifies
 \begin{equation}\label{equa:dimunion} 
 \dim (A_{1}\cup A_{2})= \max (\dim A_{1},\dim A_{2}).
 \end{equation}
\end{definition}

We choose this  definition and do   a selection among singular simplexes. 
As it appears below, the allowability condition is usually most conveniently expressed in terms of the
complementary perversity, $D\ov{p}$, rather than $\ov{p}$.

\begin{definition}\label{def:homotopygeom}
Let $(X,\ov{p})$  be a  perverse space.
A simplex $\sigma\colon \Delta\to X$ is  
\emph{$\ov{p}$-allowable}  
if, for each singular stratum $S$, the set $\sigma^{-1}S$ verifies
\begin{equation}\label{equa:admissibleST}
\dim \sigma^{-1}S\leq \dim\Delta-\codim S +\ov{p}(S)=\dim\Delta-2-D\ov{p}(S),
\end{equation}
with the convention $\dim\emptyset=-\infty$.
A singular  chain $\xi$ is \emph{$\ov{p}$-allowable} if it can be written as a linear combination of 
$\ov{p}$-allowable  simplexes.
\end{definition}

 In \remref{rem:pseusobarycenter}, we illustrate this definition  in low dimensions with a  perversity such that 
 $D\ov{p}\geq 0$, which is the case of the GM-perversities.

\medskip
As explained in the introduction, we need a more restrictive choice than $\ov{p}$-allowability 
for obtaining a chain complex.
 
\begin{definition}\label{def:intersectionhomology}
A singular  chain $\xi$ is  of \emph{$\ov{p}$-intersection} if $\xi$ and its boundary $\partial \xi$ are $\ov{p}$-allowable.
We denote by $C_{*}^{\ov{p}}(X;G)$ the complex of singular chains of $\ov{p}$-intersection
and by $H_*^{\ov{p}}(X;G)$ its homology, called
\emph{$\ov{p}$-intersection homology of $X$ with coefficients in a module $G$ over a Dedekind ring $R$}.
\end{definition}

If $f\colon (X,\ov{p})\to (Y,\ov{q})$ is  a stratified map between two perverse  spaces 
with  $f^*D\ov q \leq D\ov p$,   the association $\sigma\mapsto f\circ \sigma$ sends 
a $\ov{p}$-allowable simplex on a $\ov{q}$-allowable simplex
(\cite[Proposition 1]{CST8}) and defines a chain map
$f_{*}\colon C_*^{\ov p} (X)\to C_*^{\ov q} (Y)$.

\begin{remark}\label{rem:pseusobarycenter}
In the original definition of King (\cite{MR800845}), the  dimension chosen for a subspace of a Euclidean simplex 
comes from the dimension of the skeleta containing it. In \cite{CST8}, we show that the 
two choices of dimension
 give
 isomorphic intersection homology for CS sets. 
 The proof needs a Mayer-Vietoris exact sequence and therefore a small chains replacement.
 As usual, this property comes from a subdivision process but, with the choice of the polyhedra dimension
 in \defref{def:dimension}, that is a subtle issue.
 
 \smallskip
 Let us take basic examples with a perversity $\ov{p}$, such that $D\ov{p}\geq 0$.
 A 0-simplex and a 1-simplex are $\ov{p}$-allowable if, and only if, $\sigma^{-1}S=\emptyset$ for any singular stratum $S$:
this means that they must lie in the regular part.
For a 2-simplex, the situation is different: the set $\sigma^{-1}S$ can be a finite subset.
Let us consider a 2-simplex $\Delta^2$ with one singular point, $v$, in its interior, which is thus $\ov{p}$-allowable.
Now we want to subdivise $\Delta^2$ while keeping the $\ov{p}$-allowability condition for the small 2-simplexes of the
subdivision.
To achieve this, the point $v$ must not belong to any 1-simplex of the subdivision.

\medskip
Therefore, the subdivision of a $\ov{p}$-allowable simplex $\Delta^\ell$, 
that we need, cannot be the barycentric subdivision but 
a subdivision, called  \emph{pseudobarycentric subdivision,} which verifies the following properties:
\begin{itemize}
\item a diameter of the new simplexes strictly smaller than that of the initial simplex,
\item the preservation of $\ov{p}$-allowability,
\item a construction by induction, taking the cone with vertex a suitably chosen interior point $b$ in  $\Delta^\ell$
and base the pseudobarycentric subdivision of the boundary of $\Delta^\ell$. 
The point $b$ is called a \emph{pseudobarycentre} of $\Delta^\ell$.
\end{itemize}
In \cite[Proposition 6]{CST8}, we show that such subdivisions exist for any $\ov{p}$-allowable simplex.
In this work we only use the properties recalled above.
 \end{remark}
 %
 %

%
\section{Gajer spaces}\label{sec:gajerspace}

 Let $(X,\ov{p})$ be an $n$-dimensional perverse space.
We emphasize that in the definition of intersection homology (see \secref{sec:lesdeux}), 
we do not consider simplexes but chains, $\xi$, and 
we ask for the allowability of $\xi$ and its boundary $\partial \xi$. 
Now we want to construct a simplicial set,
thus the requirements concern the simplexes and all their faces.  

\begin{definition}\label{def:gajersimplicialset}
 Let $(X,\ov{p})$ be a  perverse space.
 A simplex $\sigma\colon \Delta^{\ell}\to X$ is  \emph{$\ov{p}$-full}  if
$\sigma$ and all its faces, $\partial_{i_{1}} \dots \partial_{i_{k}}\sigma$,
 are $\ov{p}$-allowable.
 \end{definition}

\begin{remark}
Similarly,  a simplex $\sigma\colon \Delta^{\ell}\to X$ is  \emph{$\ov{p}$-full} if,
for all faces $F$ of $\Delta$ and all singular stratum $S$, one has
$\dim(\sigma^{-1}S\cap F)\leq \dim F-D\ov{p}(S)-2$.
\end{remark}

We continue with basic properties. 
Let us begin with the following result, crucial for the definition of $\ov{p}$-intersection homotopy groups.

\begin{proposition}[{\cite[Page 946]{MR1404919}}]\label{prop:gajerkan}
Let $(X,\ov{p})$ be a perverse space. Then the set of $\ov{p}$-full simplexes is a simplicial set
verifying the Kan condition. We  denote it by $\crG_{\ov{p}}X$ and call it
 the \emph{Gajer $\ov{p}$-space} associated to $X$.
\end{proposition}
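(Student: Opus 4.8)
The statement has two parts: first that $\crG_{\ov p}X$ is a sub-simplicial set of $\sing X$, and second that it satisfies the Kan extension condition. The plan is to verify the first point by a direct check against the combinatorial reformulation of $\ov p$-fullness in the Remark following \defref{def:gajersimplicialset}, and to prove the Kan condition by the standard trick of collapsing a horn-filler in $\sing X$ radially toward a vertex and checking that the resulting simplex is $\ov p$-full.

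\emph{Simplicial subset.} I would first observe that degeneracies and faces of a $\ov p$-full simplex are $\ov p$-full. For faces this is immediate from \defref{def:gajersimplicialset}: if $\sigma$ is $\ov p$-full then by definition every iterated face $\partial_{i_1}\cdots\partial_{i_k}\sigma$ is $\ov p$-allowable, and the iterated faces of $\partial_i\sigma$ are a subset of the iterated faces of $\sigma$, hence $\partial_i\sigma$ is $\ov p$-full. For a degeneracy $s_j\sigma$, I would use the geometric description: $s_j\sigma=\sigma\circ s^j$ factors through the codegeneracy map $s^j\colon \Delta^{\ell+1}\to\Delta^\ell$. Each face $F$ of $\Delta^{\ell+1}$ maps onto a face $F'$ of $\Delta^\ell$ under $s^j$, with either $\dim F'=\dim F$ or $\dim F'=\dim F-1$; in the first case $s^j|_F$ is affine-linear and surjective onto $F'$, so $(s_j\sigma)^{-1}S\cap F=(s^j|_F)^{-1}(\sigma^{-1}S\cap F')$ has polyhedral dimension $\dim(\sigma^{-1}S\cap F')+1\le \dim F'-D\ov p(S)-2+1=\dim F-D\ov p(S)-1$; wait, that is off by one, so I would instead note that a degenerate simplex's preimages over $S$ are cylinders and track the dimensions carefully — the cleanest route is to observe that $s_j\sigma$ has image equal to that of $\sigma$ and that a face $F\subset\Delta^{\ell+1}$ not containing both vertices $j,j+1$ maps \emph{isomorphically} onto a face of $\Delta^\ell$, while a face containing both contributes a preimage that is a prism over $\sigma^{-1}S\cap F'$ whose dimension is still controlled because $\dim F=\dim F'+1$ and the allowability bound for $F'$ in $\Delta^\ell$ has a slack of one relative to what is needed for $F$. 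This dimension bookkeeping is the first routine-but-delicate point.

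\emph{Kan condition.} Given a horn $\Lambda[\ell,k]\to\crG_{\ov p}X$, compose with the inclusion $\crG_{\ov p}X\hookrightarrow\sing X$ and use that $\sing X$ is Kan to get a filler $\tau\colon\Delta^\ell\to X$. The issue is that $\tau$ need not be $\ov p$-full. The remedy, following Gajer, is to replace $\tau$ by $\tau'=\tau\circ r$ where $r\colon\Delta^\ell\to\Delta^\ell$ is the affine retraction of $\Delta^\ell$ onto its $k$-th horn $\Lambda^\ell_k$, i.e. the radial collapse away from the barycenter of the $k$-th face (or toward the vertex opposite it — whichever realizes the standard deformation retraction of $\Delta^\ell$ onto $\Lambda^\ell_k$). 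Then $\tau'$ still restricts to the given horn on $\Lambda[\ell,k]$, and for any face $F$ of $\Delta^\ell$ and singular stratum $S$ we have $(\tau')^{-1}S\cap F=r|_F^{-1}\big((\tau)^{-1}S\cap r(F)\big)$, where $r(F)$ is a face lying in $\Lambda^\ell_k$, hence a face on which the given horn — and therefore the $\ov p$-full data we started with — is defined. Since $r|_F$ is piecewise-affine with fibers of dimension $\dim F-\dim r(F)$, the polyhedral dimension of $(\tau')^{-1}S\cap F$ is at most $\dim(\text{horn-data}^{-1}S\cap r(F))+(\dim F-\dim r(F))\le (\dim r(F)-D\ov p(S)-2)+(\dim F-\dim r(F))=\dim F-D\ov p(S)-2$, using \eqref{equa:dimunion} to handle $r(F)$ being a union of faces of $\Lambda^\ell_k$ when $F$ is the top face. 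So $\tau'$ is $\ov p$-full and fills the horn in $\crG_{\ov p}X$.

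\textbf{Main obstacle.} The conceptual content is minimal; the real work is the polyhedral-dimension estimate for the retraction $r$ — in particular verifying that $r$ can be chosen piecewise-affine (so that it maps polyhedra to polyhedra and does not increase dimension beyond the fiber-dimension correction) and that $r(F)$ is always a subcomplex of $\Lambda^\ell_k$ on which the horn is defined, including the borderline case $F=\Delta^\ell$ where $r(\Delta^\ell)=\Lambda^\ell_k$ is a union of $\ell-1$ faces and one invokes \eqref{equa:dimunion}. The degeneracy check in the first part is the other place where a careful reading of \defref{def:dimension} is needed. Neither step is deep, but both must be done with the polyhedral notion of dimension rather than a naive one, which is precisely why the authors isolated \defref{def:dimension} and property \eqref{equa:dimunion} beforehand.
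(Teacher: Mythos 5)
Your proposal is correct and follows essentially the same route as the paper: faces are full by definition, degeneracies are handled by a dimension count on the collapse map, and the Kan condition is obtained by composing with the standard piecewise-linear retraction of $\Delta^\ell$ onto $\Lambda^\ell_k$ together with the estimate $\dim \leq \dim(\text{horn-data preimage})+1\leq \ell-D\ov p(S)-2$, using \eqref{equa:dimunion} exactly where the paper (implicitly) does. The only cosmetic difference is your detour through a Kan filler of $\sing X$, which is redundant since $\tau\circ r$ depends only on the horn data — the paper simply sets $\tau=|\sigma|\circ\rho_k$.
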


\begin{proof} %
Let $\sigma\colon \Delta^\ell\to X$ be a $\ov{p}$-full simplex and
$S$ be a singular stratum.  By definition, the face $\partial_{i}\sigma$ is  $\ov{p}$-full  for any face of $\sigma$.
 Let $s^i\colon \Delta^{\ell+1}\to \Delta^\ell$ be a codegeneracy and $d^j\colon\Delta^\ell \to \Delta^{\ell+1}$
be a coface. The commutator rules %
express
$\sigma\circ s^i\circ d^j$ as a composition $\sigma\circ d^r\circ s^t$. By induction on the dimension~$\ell$, we 
deduce that $\sigma\circ s^i$ is $\ov{p}$-full.
Thus the set of $\ov{p}$-full simplexes is a simplicial subset of the singular set $\sing \,X$.
By using the adjunction between the functor $\sing$ and the realization functor, 
the Kan extension condition is equivalent to the construction of
a $\ov{p}$-full simplex $\tau$ making commutative the following diagram,
$$\xymatrix{
\Lambda^\ell_{k}\ar[r]^-{|\Psi |}\ar@{^(->}[d]&X,\\
\Delta^\ell\ar@{-->}[ru]_-{\tau}&
}$$
for any $0\leq k\leq \ell$.
If $\rho_{k}\colon \Delta^\ell\to \Lambda^\ell_{k}$ denotes the radial projection 
from the barycentre of the face $\partial_{k}\Delta^\ell$, we set
$\tau=|\Psi|\circ \rho_{k}$.
For any singular stratum $S$, we have
$$\dim \left(|\Psi|\circ \rho_{k}\right)^{-1}S \leq 
\dim (|\Psi|)^{-1}S+1 \leq (\ell-1-D\ov{p}(S)-2)+1=\ell -D\ov{p}(S)-2,
$$
and the $\ov{p}$-allowability condition of $\tau$ is satisfied.
As $\rho_{k}$ is an affine map, the same argument works for any face of $\Delta^\ell$.
\end{proof}

The Gajer space depends on a filtration $\crX$ on the topological space $X$ and we
should denote it by $\crG_{\ov{p}}(X,\crX)$.
In fact,  we do not mention explicitly the filtration  and  simply write $\crG_{\ov{p}}X$.

\begin{proposition}{\cite[Example 1]{MR1404919}}\label{prop:homotopyxR}
 Let $(X,\ov{p})$ be a  perverse space 
  and $Y$ be a  topological space. 
 The product $Y\times X$ is equipped with the product filtration
 and the product perversity, also denoted by $\ov{p}$.  
 Then, the canonical projections, $p_{Y}\colon Y\times X\to Y$ and $p_{X}\colon Y\times X\to X$,
 induce an isomorphism 
 $$\crG_{\ov{p}}(Y\times X)\cong \sing \,Y\times \crG_{\ov{p}}X.$$
\end{proposition}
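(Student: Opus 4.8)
The plan is to exploit the fact that $\ov{p}$-fullness of a simplex $\tau\colon \Delta^\ell\to Y\times X$ is detectable coordinate by coordinate, since the filtration and perversity on $Y\times X$ are pulled back from $X$ along $p_X$, and the strata of $Y\times X$ are exactly $Y\times S$ for $S\in\cS_X$ with $\codim (Y\times S)=\codim S$ and $\ov{p}(Y\times S)=\ov{p}(S)$. First I would note that the universal property of the product of simplicial sets gives a natural bijection between simplices $\tau\colon \Delta^\ell\to Y\times X$ and pairs $(\tau_Y,\tau_X)=(p_Y\circ\tau,\,p_X\circ\tau)$ of simplices $\tau_Y\colon \Delta^\ell\to Y$, $\tau_X\colon\Delta^\ell\to X$; this bijection is compatible with all faces and degeneracies, so it restricts to an isomorphism of simplicial sets $\sing(Y\times X)\cong \sing Y\times\sing X$. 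It remains to check that under this identification the subsimplicial set $\crG_{\ov{p}}(Y\times X)$ corresponds exactly to $\sing Y\times\crG_{\ov{p}}X$.

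The key computation is the following: for a singular simplex $\tau\colon\Delta^\ell\to Y\times X$ with components $(\tau_Y,\tau_X)$ and any singular stratum $S$ of $X$, one has $\tau^{-1}(Y\times S)=\tau_X^{-1}(S)$ as subsets of $\Delta^\ell$, because $\tau^{-1}(Y\times S)=\{v\mid p_X(\tau(v))\in S\}=\tau_X^{-1}(S)$. Hence for every face $F$ of $\Delta^\ell$,
\[
\dim\bigl(\tau^{-1}(Y\times S)\cap F\bigr)=\dim\bigl(\tau_X^{-1}(S)\cap F\bigr),
\]
and since $\codim(Y\times S)=\codim S$ and $\ov{p}(Y\times S)=\ov{p}(S)$, the $\ov{p}$-allowability inequality $\dim(\tau^{-1}(Y\times S)\cap F)\le \dim F-\codim S+\ov{p}(S)$ for $\tau$ on the face $F$ is literally the same inequality as the one for $\tau_X$ on $F$. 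As $Y$ carries no singular strata (it is just a topological space, all of $Y$ is regular), there are no further allowability conditions coming from the $Y$-factor. Therefore $\tau$ is $\ov{p}$-full (i.e. allowable together with all its iterated faces) if and only if $\tau_X$ is $\ov{p}$-full, with $\tau_Y$ arbitrary; that is, $\tau\in\crG_{\ov{p}}(Y\times X)_\ell$ if and only if $(\tau_Y,\tau_X)\in (\sing Y\times\crG_{\ov{p}}X)_\ell$.

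Putting these together, the isomorphism $\sing(Y\times X)\cong\sing Y\times\sing X$ carries the simplicial subset $\crG_{\ov{p}}(Y\times X)$ onto $\sing Y\times\crG_{\ov{p}}X$, and this identification is induced precisely by the pair of maps $(\sing p_Y,\,\crG_{\ov{p}}p_X)$ obtained by functoriality from the projections (the projection $p_Y$ is stratified with $p_Y^*D\ov{0}=D\ov{0}\le D\ov p$ trivially since $Y$ is unfiltered, and $p_X$ is stratified with $p_X^*D\ov p=D\ov p$, so both induce maps of Gajer spaces). I expect no real obstacle here: the only point that needs a moment's care is the bookkeeping that the product filtration and product perversity on $Y\times X$ are exactly what make the strata and their invariants match between $Y\times X$ and $X$, together with the elementary observation that the polyhedral-dimension condition of \defref{def:gajersimplicialset} is insensitive to the $Y$-coordinate because preimages of strata only see $\tau_X$.
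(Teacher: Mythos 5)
Your proof is correct and follows essentially the same route as the paper: the paper also identifies a simplex $\sigma=(\sigma_Y,\sigma_X)\colon\Delta^\ell\to Y\times X$ with its components and observes that $\sigma$ is $\ov{p}$-full if and only if $\sigma_X$ is, since the strata of $Y\times X$ are the $Y\times S$ with unchanged codimension and perversity. Your version merely spells out the preimage computation $\tau^{-1}(Y\times S)=\tau_X^{-1}(S)$ and the compatibility with faces, which the paper leaves implicit.
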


\begin{proof}
Let $\sigma=(\sigma_{Y},\sigma_{X})\colon \Delta^\ell\to Y\times X$ be a simplex.
By definition, $\sigma$ is $\ov{p}$-full if, and only if, $\sigma_{X}$ is $\ov{p}$-full also.
The projections therefore induce an isomorphism
$\crG_{\ov{p}}(Y\times X)\cong \sing \,Y\times \crG_{\ov{p}}X$.
\end{proof}

The following statement concerns the functoriality of the association
$(X,\ov{p})\mapsto \crG_{\ov{p}}X$
and its compatibility with homotopy.

\begin{proposition}\label{prop:gajermap}
Let $f \colon (X,\ov{p}) \to (Y,\ov{q})$ be a stratified map between perverse spaces such that
$f^*D\ov{q}\leq D\ov{p}$. 
Then, 
$f$ induces a map $\crG_{\ov{p},\ov{q}}f\colon \crG_{\ov{p}}X\to \crG_{\ov{q}}Y$.
Moreover, if  $\varphi\colon (X\times [0,1],\ov{p})\to (Y,\ov{q})$ is a  stratified homotopy between two stratified maps
$f,\,g\colon (X,\ov{p})\to (Y,\ov{q})$ with $f^*D\ov{q}\leq D\ov{p}$, then we also have $g^*D\ov{q}\leq D\ov{p}$
and the simplicial maps, $\crG_{\ov{p},\ov{q}}f$ and $\crG_{\ov{p},\ov{q}}g$ are homotopic.
\end{proposition}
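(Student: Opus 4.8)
The statement has two parts: functoriality of $\crG$ on stratified maps $f$ satisfying $f^*D\ov q\leq D\ov p$, and homotopy invariance. I would prove them in order, as they are stated.

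For the functoriality part, the key input is the fact recalled at the end of Section~1.3: if $f\colon(X,\ov p)\to(Y,\ov q)$ is stratified with $f^*D\ov q\leq D\ov p$, then $\sigma\mapsto f\circ\sigma$ carries $\ov p$-allowable simplexes to $\ov q$-allowable ones (this is \cite[Proposition 2.4]{CST8}). So first I would check that it also carries $\ov p$-\emph{full} simplexes to $\ov q$-full ones. This is immediate from the definition: if $\sigma\colon\Delta^\ell\to X$ is $\ov p$-full, then for every iterated face $\partial_{i_1}\cdots\partial_{i_k}\sigma$, the composite $f\circ\partial_{i_1}\cdots\partial_{i_k}\sigma=\partial_{i_1}\cdots\partial_{i_k}(f\circ\sigma)$ is $\ov q$-allowable by the recalled fact, so $f\circ\sigma$ is $\ov q$-full. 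Since $\sigma\mapsto f\circ\sigma$ is already simplicial on $\sing X\to\sing Y$, restricting it gives the simplicial map $\crG_{\ov p,\ov q}f\colon\crG_{\ov p}X\to\crG_{\ov q}Y$.

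For the homotopy part, first I would observe that $g^*D\ov q\leq D\ov p$ holds: since $\varphi$ is stratified, every stratum $S$ of $X$ has $S\times[0,1]$ mapping into a single stratum $S^\varphi$ of $Y$ with $\codim S^\varphi\leq\codim S$; as $f=\varphi(-,0)$ and $g=\varphi(-,1)$, both $S\times\{0\}$ and $S\times\{1\}$ land in $S^\varphi$, so $S^f=S^g=S^\varphi$ and hence $g^*D\ov q(S)=D\ov q(S^g)=D\ov q(S^\varphi)=f^*D\ov q(S)\leq D\ov p(S)$. Now a simplicial homotopy $\crG_{\ov p}X\times\Delta[1]\to\crG_{\ov q}Y$ between $\crG_{\ov p,\ov q}f$ and $\crG_{\ov p,\ov q}g$ is the same as a map $H\colon\crG_{\ov p}X\times\Delta[1]\to\crG_{\ov q}Y$ restricting to these on the two ends (this uses that $\crG_{\ov q}Y$ is Kan, by \propref{prop:gajerkan}, so ``homotopic'' in the naive sense coincides with the simplicial-homotopy relation, and it is an equivalence relation). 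The natural candidate: given a $\ov p$-full $\ell$-simplex $\sigma\colon\Delta^\ell\to X$ and an $\ell$-simplex $\alpha$ of $\Delta[1]$, i.e. a monotone map $|\alpha|\colon\Delta^\ell\to\Delta^1=[0,1]$, set $H(\sigma,\alpha)=\varphi\circ(\sigma,|\alpha|)\colon\Delta^\ell\to X\times[0,1]\to Y$. This is visibly simplicial in both variables, and on the end $\alpha=(0,\dots,0)$ (resp. $(1,\dots,1)$) it gives $\varphi\circ(\sigma,0)=f\circ\sigma$ (resp. $g\circ\sigma$).

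The one genuine point to verify — and the place I expect the main work — is that $H(\sigma,\alpha)$ is $\ov q$-full, i.e. that $\varphi\circ(\sigma,|\alpha|)$ and all its faces are $\ov q$-allowable. For a singular stratum $S$ of $X\times[0,1]$, necessarily of the form $S'\times[0,1]$ for a singular stratum $S'$ of $X$, one has $(\sigma,|\alpha|)^{-1}(S'\times[0,1])=\sigma^{-1}S'$, so $\dim(\sigma,|\alpha|)^{-1}(S'\times[0,1])=\dim\sigma^{-1}S'\leq\dim\Delta^\ell-2-D\ov p(S')=\dim\Delta^\ell-2-D\ov p(S'\times[0,1])$ by the convention on the product perversity; hence $(\sigma,|\alpha|)$ is $\ov p$-full on $X\times[0,1]$ (the same computation applies verbatim to every face of $\Delta^\ell$, since restricting $\sigma$ and $|\alpha|$ to a face behaves the same way). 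Then the recalled fact $\sigma\mapsto\varphi\circ\sigma$ sends $\ov p$-full to $\ov q$-full, using $\varphi^*D\ov q\leq D\ov p$, finishes the argument. I would present this allowability check carefully since it is the substance of the proof; the rest is formal manipulation of simplicial identities.
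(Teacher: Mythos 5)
Your proposal is correct and takes essentially the same route as the paper: functoriality comes from \cite[Proposition 2.4]{CST8} applied to all iterated faces, the equality $S^f=(S\times[0,1])^{\varphi}=S^g$ gives $g^*D\ov{q}\leq D\ov{p}$, and your explicit homotopy $H(\sigma,\alpha)=\varphi\circ(\sigma,|\alpha|)$ is exactly the paper's composite $\crG_{\ov{p}}X\times \Delta[1]\to \crG_{\ov{p}}(X\times \Delta^1)\to \crG_{\ov{q}}Y$. The only cosmetic difference is that you verify by hand that $(\sigma,|\alpha|)$ is $\ov{p}$-full on $X\times[0,1]$, which is precisely the content of \propref{prop:homotopyxR} that the paper cites at that step.
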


\begin{proof}
As we have already noticed, the association
$\sigma\mapsto f\circ \sigma$ 
sends a $\ov{p}$-allowable simplex on a $\ov{q}$-allowable simplex.
Let $S$ be a stratum of $X$. Recall that the perversity on $X\times [0,1]$ is still denoted by $\ov{p}$ and defined by $\ov{p}(S\times [0,1])=\ov{p}(S)$.
From the stratification of the product $X\times [0,1]$, we have
$S^f=(S\times [0,1])^{\varphi}=S^g$. 
Thus, 
$f^*D\ov{q}(S)=D\ov{q}(S^f)=D\ov{q}((S\times [0,1])^{\varphi})=\varphi^*D\ov{q}(S\times [0,1])$
and, in the same way,
$g^*D\ov{q}(S)=\varphi^*D\ov{q}(S\times [0,1])$.
Thus the three conditions $f^*D\ov{q}\leq D\ov{p}$, $g^*D\ov{q}\leq D\ov{p}$ and $\varphi^*D\ov{q}\leq D\ov{p}$ are equivalent and verified by hypothesis.
The associations
$\sigma\mapsto f\circ \sigma, \;g\circ \sigma, \;\varphi\circ\sigma$
being simplicial maps, 
the maps $f$, $g$ and the homotopy $\varphi$ induce simplicial maps 
$\crG_{\ov{p},\ov{q}}f,\;\crG_{\ov{p},\ov{q}}g\colon\crG_{\ov{p}}X\to \crG_{\ov{q}}Y$
and
$\crG_{\ov{p},\ov{q}}\varphi\colon \crG_{\ov{p}}(X\times [0,1])\to \crG_{\ov{q}}Y$.
Using  \propref{prop:homotopyxR}, we get the desired homotopy as the composition
$$
\crG_{\ov{p}}X\times \Delta[1]\to \crG_{\ov{p}}X\times \sing(\Delta^1)\to
\crG_{\ov{p}}(X\times \Delta^1)
\xrightarrow{\;\crG_{\ov{p},\ov{q}\;}\varphi}
\crG_{\ov{q}}Y.
$$
\end{proof}

In particular, \defref{def:homotopyequistrat} and \propref{prop:gajermap} imply the following result.

\begin{corollary}\label{cor:gajerhomotopy}
Let $f\colon (X,\ov{p})\to (Y,\ov{q})$ be a stratified homotopy equivalence with $D\ov{p}=f^*D\ov{q}$.
Then $\crG_{\ov{p},\ov{q}}f\colon \crG_{\ov{p}}X\to \crG_{\ov{q}}Y$ is a homotopy equivalence.
\end{corollary}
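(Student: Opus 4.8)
The plan is to deduce this corollary from \propref{prop:gajermap} by the standard ``retraction'' argument, being careful about the perversity bookkeeping that the hypotheses are designed to make consistent. First I would recall from \defref{def:homotopyequistrat} that a stratified homotopy equivalence $f\colon X\to Y$ comes with a stratified map $g\colon Y\to X$ and stratified homotopies $H\colon X\times[0,1]\to X$ from $g\circ f$ to $\id_X$ and $K\colon Y\times[0,1]\to Y$ from $f\circ g$ to $\id_Y$, and moreover that $\codim S=\codim S^f$ and $\codim T=\codim T^g$ for all strata $S$ of $X$, $T$ of $Y$; in particular $S^{g\circ f}=S$ and $T^{f\circ g}=T$ as noted after \defref{def:homotopyequistrat}.

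The key steps, in order, are as follows. Step 1: translate the hypothesis $D\ov{p}=f^*D\ov{q}$, i.e. $D\ov{p}(S)=D\ov{q}(S^f)$ for every stratum $S$ of $X$, and derive the companion equality $D\ov{q}=g^*D\ov{p}$. For this, given a stratum $T$ of $Y$, apply the hypothesis to the stratum $S=T^g$ of $X$: $D\ov{p}(T^g)=D\ov{q}((T^g)^f)=D\ov{q}(T^{f\circ g})=D\ov{q}(T)$, using $T^{f\circ g}=T$. Hence $g^*D\ov{p}(T)=D\ov{p}(T^g)=D\ov{q}(T)$, so indeed $D\ov{q}=g^*D\ov{p}$ (in particular $g^*D\ov{p}\leq D\ov{p}$, so \propref{prop:gajermap} applies to $g$). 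Step 2: apply \propref{prop:gajermap} to $f$ and to $g$ to obtain simplicial maps $\crG_{\ov{p},\ov{q}}f\colon \crG_{\ov{p}}X\to\crG_{\ov{q}}Y$ and $\crG_{\ov{q},\ov{p}}g\colon \crG_{\ov{q}}Y\to\crG_{\ov{p}}X$. Step 3: show $\crG_{\ov{q},\ov{p}}g\circ\crG_{\ov{p},\ov{q}}f$ is homotopic to $\id_{\crG_{\ov{p}}X}$. Since the association $\sigma\mapsto h\circ\sigma$ is functorial in $h$, this composite equals $\crG_{\ov{p},\ov{p}}(g\circ f)$; now $g\circ f\colon X\to X$ is stratified homotopic via $H$ to $\id_X$, and one checks $(g\circ f)^*D\ov{p}\leq D\ov{p}$ — in fact $(g\circ f)^*D\ov{p}(S)=D\ov{p}(S^{g\circ f})=D\ov{p}(S)$ by Step 1's observations — so the second part of \propref{prop:gajermap} gives $\crG_{\ov{p},\ov{p}}(g\circ f)\simeq\crG_{\ov{p},\ov{p}}(\id_X)=\id_{\crG_{\ov{p}}X}$. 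Step 4: symmetrically, $\crG_{\ov{p},\ov{q}}f\circ\crG_{\ov{q},\ov{p}}g=\crG_{\ov{q},\ov{q}}(f\circ g)\simeq\id_{\crG_{\ov{q}}Y}$, using the homotopy $K$ and the fact (from Step 1) that $(f\circ g)^*D\ov{q}=D\ov{q}$. This exhibits $\crG_{\ov{p},\ov{q}}f$ as a homotopy equivalence.

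The only genuinely delicate point — the ``main obstacle,'' though it is mild — is the perversity bookkeeping in Step 1: one must be sure that the symmetry $\codim S=\codim S^f$, $\codim T=\codim T^g$ built into \defref{def:homotopyequistrat}, together with $S^{g\circ f}=S$ and $T^{f\circ g}=T$, really does upgrade the one-sided hypothesis $D\ov{p}=f^*D\ov{q}$ to the two matching conditions needed to invoke \propref{prop:gajermap} for $g$ and for the two composites. Everything else is formal: functoriality of $\sigma\mapsto h\circ\sigma$ on $\ov{p}$-full simplexes and the homotopy-invariance clause of \propref{prop:gajermap}. No point-set subtlety about the filtrations enters, since \propref{prop:gajermap} has already absorbed it.
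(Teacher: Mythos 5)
Your argument is correct and is essentially the paper's own proof: both deduce $g^*D\ov{p}=D\ov{q}$ from the hypothesis $D\ov{p}=f^*D\ov{q}$ together with $S^{g\circ f}=S$ and $T^{f\circ g}=T$, and then invoke \propref{prop:gajermap} twice to obtain the two simplicial maps and the homotopies between their composites and the identities. One cosmetic slip: where you write ``in particular $g^*D\ov{p}\leq D\ov{p}$'' the condition needed to apply \propref{prop:gajermap} to $g\colon (Y,\ov{q})\to (X,\ov{p})$ is $g^*D\ov{p}\leq D\ov{q}$, which follows at once from the equality $g^*D\ov{p}=D\ov{q}$ you just established.
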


\begin{proof}
Let $S$ be a stratum of $X$. By hypothesis,   there exists a stratified map $g\colon (Y,\ov{q})\to (X,\ov{p})$ and a stratified homotopy $\varphi$ between 
$g\circ f$ and $\id_{X}$.
From  the stratification of the product $X\times [0,1]$, we deduce
$S^{g\circ f}=(S\times [0,1])^{\varphi}=S^{\id}=S$.
Similarly, for any stratum $T$ of $Y$, we have $T^{f\circ g}=T$.
These equalities and the hypothesis $D\ov{p}=f^*D\ov{q}$ give $g^*D\ov{p}=D\ov{q}$. 
With \propref{prop:gajermap}, we get two simplicial maps,
$\crG_{\ov{p},\ov{q}}f\colon \crG_{\ov{p}}X\to \crG_{\ov{q}}Y$
and
$\crG_{\ov{q},\ov{p}}g\colon \crG_{\ov{q}}Y\to \crG_{\ov{p}}X$
and homotopies between their compositions and the identity maps.
\end{proof}

Let us  look at the particular case of isolated singular points.

\begin{proposition}\label{prop:isolatedgajer}
Let $(X,\ov{p})$ be a perverse space with an isolated singularity $x$.
Set $\ell=D\ov{p}(\{x\})+1$.
Then, there is an isomorphism between the $\ell$-skeleta of $\crG_{\ov{p}}X$ 
and $\crG_{\ov{p}}(X\menos \{x\})$. 
\end{proposition}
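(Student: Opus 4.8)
The plan is to compare the two simplicial sets directly, as simplicial subsets of $\sing X$. First I would note that, $x$ being the only singular point, the induced filtration on $X\menos\{x\}$ has empty singular subspace; hence every singular simplex of $X\menos\{x\}$ is $\ov{p}$-full, so $\crG_{\ov{p}}(X\menos\{x\})=\sing(X\menos\{x\})$, regarded as a simplicial subset of $\sing X$ via the open inclusion $X\menos\{x\}\hookrightarrow X$. Since the unique singular stratum of $X$ is $S=\{x\}$, the criterion of \defref{def:gajersimplicialset} (equivalently, the face-wise form recalled just after it) reads: a $k$-simplex $\sigma\colon\Delta^k\to X$ belongs to $\crG_{\ov{p}}X$ if and only if, for every face $F$ of $\Delta^k$,
\[\dim\bigl(\sigma^{-1}(x)\cap F\bigr)\leq \dim F-\codim S+\ov{p}(S)=\dim F-D\ov{p}(\{x\})-2=\dim F-\ell-1.\]

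Next I would show that $\crG_{\ov{p}}X$ and $\sing(X\menos\{x\})$ have exactly the same $k$-simplexes for every $k\leq\ell$. If $\sigma\in(\crG_{\ov{p}}X)_k$ with $k\leq\ell$, then applying the displayed inequality to $F=\Delta^k$ gives $\dim\sigma^{-1}(x)\leq k-\ell-1\leq -1$, which (with the convention $\dim\emptyset=-\infty$ of \defref{def:homotopygeom}) forces $\sigma^{-1}(x)=\emptyset$; thus $\sigma$ factors through $X\menos\{x\}$, i.e. $\sigma\in\sing(X\menos\{x\})_k$. Conversely, if $\sigma\in\sing(X\menos\{x\})_k$, then $\sigma^{-1}(x)\cap F=\emptyset$ for every face $F$, so all the inequalities above hold trivially and $\sigma\in(\crG_{\ov{p}}X)_k$. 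Hence $(\crG_{\ov{p}}X)_k=\sing(X\menos\{x\})_k$ inside $(\sing X)_k$ for all $k\leq\ell$, and this identification is automatically compatible with faces and degeneracies because both sides are simplicial subsets of the same simplicial set $\sing X$.

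Finally I would invoke the elementary fact that the $\ell$-skeleton of a simplicial subset of $\sing X$ is completely determined by its simplexes in degrees $\leq\ell$ (its simplexes in degrees $>\ell$ being precisely their iterated degeneracies). Combined with the previous step, this yields $\sk_\ell\crG_{\ov{p}}X=\sk_\ell\sing(X\menos\{x\})=\sk_\ell\crG_{\ov{p}}(X\menos\{x\})$ as simplicial subsets of $\sing X$, which is the asserted isomorphism, induced by the inclusion $X\menos\{x\}\hookrightarrow X$. I do not expect a genuine obstacle: the only points requiring care are the use of the ``all faces'' form of $\ov{p}$-fullness and the convention $\dim\emptyset=-\infty$, while the one substantive observation is that the inequality for the top face $F=\Delta^k$ forces a $\ov{p}$-full $k$-simplex with $k\leq\ell$ to avoid $x$ altogether.
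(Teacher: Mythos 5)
Your central computation is exactly the one the paper uses: for $j\le\ell=D\ov{p}(\{x\})+1$, the allowability inequality at the stratum $\{x\}$ forces $\dim\sigma^{-1}(x)\le j-D\ov{p}(\{x\})-2\le-1$, hence $\sigma^{-1}(x)=\emptyset$, and conversely a simplex missing $x$ satisfies that condition vacuously; since skeleta are determined by simplexes in degrees $\le\ell$, the isomorphism of $\ell$-skeleta follows. So in spirit this is the paper's proof.

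The one real issue is your opening reduction ``$x$ being the only singular point, \dots\ $\crG_{\ov{p}}(X\menos\{x\})=\sing(X\menos\{x\})$''. You are reading the hypothesis as $\Sigma_{X}=\{x\}$, which is stronger than what is stated (an isolated singularity, not the only one) and, more importantly, stronger than what the proposition must cover: in \propref{prop:cone} and \propref{prop:LtocL} it is applied to the open cone $\rc X$ on an arbitrary compact filtered space, with $x=\tv$ the apex, and there $\rc X$ has the further singular strata $S\times\,]0,1[$, so $\tv$ is a point stratum but need not even be isolated in $\Sigma_{\rc X}$. In that situation $\crG_{\ov{p}}(X\menos\{x\})\neq\sing(X\menos\{x\})$, and your degreewise comparison, which only tests the allowability condition at the stratum $\{x\}$, no longer proves fullness in $X$. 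The repair is short and is what the paper's argument implicitly contains: for a simplex $\sigma$ with $\sigma^{-1}(x)=\emptyset$, the strata of $X\menos\{x\}$ for the induced filtration are precisely the strata of $X$ other than $\{x\}$, with the same codimensions and the same induced perversity values, so such a $\sigma$ (together with all its faces) is $\ov{p}$-full in $X$ if and only if it is $\ov{p}$-full in $X\menos\{x\}$; combining this with your inequality in degrees $\le\ell$ gives the equality of $\ell$-skeleta without assuming $\Sigma_{X}=\{x\}$.
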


\begin{proof}
The $\ov{p}$-allowability condition of a simplex $\sigma\colon \Delta^j \to  X$ for a singular stratum $\{x\}$  is:
\begin{equation}\label{equa:encoreadmisv}
\dim \sigma^{-1}x \leq j - D\ov{p}(\{x\})-2 .
\end{equation}
Thus, for any $j\leq D\ov{p}(\{x\})+1$, the inequality \eqref{equa:encoreadmisv}  gives
$\dim \sigma^{-1}x \leq -1$.
This implies $\sigma^{-1}x=\emptyset$ and an isomorphism of the $j$-skeleta
$(\crG_{\ov{p}}X))^{j}\cong (\crG_{\ov{p}}(X\menos\{x\})^{j}$.
\end{proof}

Let $K$ be a simplicial set and $\Pi K$ be its fundamental groupoid. Let us recall that a
local coefficient system (of abelian groups) on $K$ is a contravariant functor
$\cE\colon \Pi K\to \Ab$,
with values in the category of abelian groups, see \cite[Appendix I]{MR0210125} or \cite[Page 340]{MR1711612}.
Such functor generates a chain complex $C_{*}(K;\cE)$ whose homology is called the homology of $K$
with coefficients in the local system $\cE$. The elements of $C_{j}(K;\cE)$ are chains
$\xi=\sum_{i\in I}a_{i}\sigma_{i}$ with 
$\sigma_{i}\in K_{j}$ 
and $a_{i}\in \cE(\sigma_{i}(0))$. 
The differential of  $a\sigma$, with $\sigma\in K_{j}$ and $a\in \cE(\sigma(0))$, 
is given by
$$\partial (a\sigma)=\sum_{i=1}^j (-1)^j a\,\partial_{i}\sigma+\cE(\sigma([01]))^{-1}(a)\,\partial_{0}\sigma.
$$
Isomorphisms between homotopy groups of simplicial sets (or spaces) can be detected from the existence of 
isomorphisms between fundamental groups and isomorphisms of homology groups
for any local coefficient system 
 (\cite[Proposition II.3.4]{MR0223432}). 

\begin{lemma}\label{lem:quillen}
For a simplicial map $f\colon K\to K'$, the following conditions are equivalent. 
\begin{enumerate}
\item The map $f$ is a weak equivalence.
\item The map $f$ induces isomorphisms, $\pi_{0}(K)\cong \pi_{0}(K')$, $\pi_{1}(K,x)\cong \pi_{1}(K',f(x))$ 
for any $x\in K_{0}$,
$H_{j}(K';\cE)\cong H_{j}(K;f^*\cE)$, for any local coefficient system $\cE$  on $K'$ and any $j$.
\end{enumerate} 
\end{lemma}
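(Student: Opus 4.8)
The plan is to deduce this from the classical recognition principle for weak equivalences of simplicial sets, namely \cite[Proposition II.3.4]{MR0223432} (quoted just above the statement), which says that a simplicial map is a weak equivalence if and only if it induces a bijection on $\pi_0$, isomorphisms on fundamental groups at every basepoint, and isomorphisms on homology with every local coefficient system on the target. So the only real content of the lemma is to reconcile the bookkeeping of that cited proposition with the precise list in condition (2): that it suffices to test homology with coefficients in systems on $K'$ pulled back to $K$, rather than, say, systems on $K$ directly.

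First I would recall that $(1)\Rightarrow(2)$ is essentially trivial: a weak equivalence induces isomorphisms on all homotopy groups, hence in particular $\pi_0(K)\cong\pi_0(K')$ and $\pi_1(K,x)\cong\pi_1(K',f(x))$ for every vertex $x$; and for homology with local coefficients, given any system $\cE$ on $K'$, the map $f$ induces a morphism of chain complexes $C_*(K,f^*\cE)\to C_*(K',\cE)$, which is a quasi-isomorphism because $f$ is a weak equivalence — one can see this either by appealing directly to the homotopy invariance of local homology, or by passing to the universal cover and using that $f$ is compatible with the action of the fundamental groupoids (the $\pi_1$-isomorphism hypothesis makes $f^*$ on local systems an equivalence of categories over the relevant components). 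For the direction $(2)\Rightarrow(1)$, I would invoke \cite[Proposition II.3.4]{MR0223432} more or less verbatim: its hypotheses are exactly the $\pi_0$-bijection, the $\pi_1$-isomorphisms at all basepoints, and isomorphisms $H_j(K,f^*\cE)\cong H_j(K',\cE)$ for all local systems $\cE$ on $K'$, which is what (2) provides.

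The one point that needs a word of care — and the step I expect to be the mildest obstacle — is the passage between ``local systems on $K'$'' and ``local systems on $K$''. Since $f$ is assumed to be a $\pi_1$-isomorphism componentwise, pullback along the fundamental groupoid $\Pi f\colon \Pi K\to\Pi K'$ is an equivalence of categories on each component, so every local system on $K$ is, up to isomorphism over $K$, of the form $f^*\cE$ for some $\cE$ on $K'$; thus testing against systems of the form $f^*\cE$ loses no information, and the formulation in (2) is genuinely equivalent to the one in the cited reference. I would state this reduction explicitly and then simply quote \cite[Proposition II.3.4]{MR0223432}, together with the remark (already made in the text preceding the lemma) that isomorphisms of homotopy groups of simplicial sets can be detected by fundamental groups plus homology with arbitrary local coefficients. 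No computation is required; the proof is a short citation-plus-reconciliation argument.
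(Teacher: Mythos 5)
Your proposal is correct and matches the paper's treatment: the paper gives no separate proof of this lemma, treating it precisely as the cited recognition principle \cite[Proposition II.3.4]{MR0223432}, which is exactly what you invoke. Your extra remark that $\pi_{0}$-bijectivity and the $\pi_{1}$-isomorphisms make pullback of local systems along $f$ an equivalence, so that testing against systems of the form $f^*\cE$ suffices, is a harmless and correct reconciliation that the paper leaves implicit.
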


We  also use the next well-known  slight modification.

\begin{lemma}\label{lem:quillenfini}
Let $f\colon K\to K'$ be a simplicial map between Kan simplicial sets whose induced maps verify the following properties:
$\pi_{0}(K)\cong \pi_{0}(K')$, $\pi_{1}(K,x)\cong \pi_{1}(K',f(x))$ for any $x\in K_{0}$,
$H_{j}(K;f^*\cE)\cong H_{j}(K';\cE)$, for 
any local coefficient system $\cE$ on $K'$, 
 any $j\leq \ell$ and
$H_{\ell+1}(K;f^*\cE)\to H_{\ell+1}(K';\cE)$ is a surjection.
Then, the map induced by $f$ between the  homotopy groups is an isomorphism 
for $j\leq \ell$ and a surjection for $j=\ell+1$.
\end{lemma}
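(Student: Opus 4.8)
The plan is to deduce \lemref{lem:quillenfini} from \lemref{lem:quillen} by a Postnikov-truncation (or equivalently, killing-higher-homotopy) argument, reducing the finite-range hypotheses to the full weak-equivalence hypothesis of \lemref{lem:quillen}. First I would replace $f\colon K\to K'$, up to homotopy, by a Kan fibration; since $K$ and $K'$ are already Kan, $f$ factors as a trivial cofibration followed by a Kan fibration, and it suffices to treat the fibration. Now I would consider the $(\ell+1)$-st Postnikov sections $P_{\ell+1}K$ and $P_{\ell+1}K'$ (the coskeleton-type truncations that kill homotopy above degree $\ell+1$), which come with maps $K\to P_{\ell+1}K$, $K'\to P_{\ell+1}K'$ inducing isomorphisms on $\pi_j$ for $j\le\ell+1$, and with an induced map $P_{\ell+1}f\colon P_{\ell+1}K\to P_{\ell+1}K'$.

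The heart of the argument is to check that $P_{\ell+1}f$ satisfies the hypotheses of \lemref{lem:quillen}, hence is a weak equivalence, from which the conclusion for $f$ on $\pi_j$, $j\le\ell+1$, follows. The isomorphisms $\pi_0$, $\pi_1$ transport directly since Postnikov truncation does not change them. For homology with local coefficients: a map inducing isomorphisms on $\pi_j$, $j\le\ell+1$, and which is therefore, after truncation, a map whose homotopy fiber is $\ell$-connected — wait, more carefully, the homotopy fiber $F$ of $f$ satisfies $\pi_j(F)=0$ for $j\le\ell-1$ by the hypotheses on $\pi_0,\pi_1$ together with a comparison argument I shall make precise below; the key input is that the homology hypotheses in degrees $\le\ell$ with all local coefficients, combined with the $\pi_1$-isomorphism, force $F$ to have trivial reduced homology in degrees $\le\ell$ with all (twisted) coefficients, via the Serre spectral sequence of the fibration $F\to K\to K'$ run with local coefficients on $K'$. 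Then a relative Hurewicz argument over the (possibly nontrivial) $\pi_1(K')$-action — using that $F$ is simple enough, or directly invoking the comparison theorem for the spectral sequence — gives $\pi_j(F)=0$ for $j\le\ell$, hence the long exact sequence of the fibration yields the claimed iso for $j\le\ell$ and surjection for $j=\ell+1$.

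Alternatively, and perhaps more cleanly, I would avoid Postnikov towers and argue directly with the fibration $F\to K\xrightarrow{f}K'$: the homology Serre spectral sequence $E^2_{p,q}=H_p(K';\cE\otimes\mathcal H_q(F))\Rightarrow H_{p+q}(K;f^*\cE)$, valid for any local system $\cE$ on $K'$, combined with the hypothesis that $f_*$ is an iso on $H_*(-;\cE)$ in degrees $\le\ell$ and a surjection in degree $\ell+1$, forces $\widetilde H_q(F;\mathcal M)=0$ for $q\le\ell$ and every coefficient system $\mathcal M$ on $F$ (here one inducts on $q$, using $H_0(K';\cdots)$ and $H_1(K';\cdots)$ to peel off the relevant terms, and uses the $\pi_1$-isomorphism to control the edge behavior). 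Once $F$ is known to have trivial reduced homology with all local coefficients through degree $\ell$ and is connected with trivial $\pi_1$ (again from the $\pi_0$, $\pi_1$ isomorphisms applied to the exact sequence of the fibration), the ordinary Hurewicz theorem gives $\pi_q(F)=0$ for $q\le\ell$; and then the exact homotopy sequence of the fibration gives that $f$ induces isomorphisms on $\pi_j$ for $j\le\ell$ and a surjection on $\pi_{\ell+1}$.

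The main obstacle I anticipate is the careful bookkeeping with \emph{local} coefficient systems and a possibly nonabelian, nontrivial $\pi_1$: one must make sure the Serre spectral sequence with twisted coefficients is being applied correctly, that $F$ is really \emph{nilpotent} or at least that the relative Hurewicz statement one needs is the one available (which is why using \emph{all} local systems $\mathcal M$ on $F$, rather than just constant ones, is essential — it makes the $\pi_1(F)$-action moot and lets one conclude $\pi_q(F)=0$ stepwise). Everything else — the factorization into a fibration, the long exact sequences, the inductive peeling of spectral-sequence terms — is routine once that point is set up correctly, so in keeping with the style of the paper I would state it briefly and refer to \cite[Proposition II.3.4]{MR0223432} or a standard reference for the local-coefficient Hurewicz and spectral sequence inputs.
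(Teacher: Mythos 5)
Your second, ``cleaner'' argument is the one to assess (the Postnikov paragraph is abandoned midway and is not needed), and it is a genuinely different route from the paper's. The paper's proof is much shorter: take $\cE=\Z[\pi_{1}(K')]$, observe, as in Quillen, that homology with this local system computes the integral homology of the universal covers (here one uses that $\pi_{1}(K)\to\pi_{1}(K')$ is an isomorphism, so the cover of $K$ classified by $f^*\cE$ is the universal cover $\widetilde K$), and conclude by the classical homology Whitehead theorem applied to $\widetilde K\to\widetilde K'$, the $\pi_{1}$- and $\pi_{0}$-statements being hypotheses. Your route through the homotopy fibre $F$ and a Zeeman-type comparison in the Serre spectral sequence can be made to work, but the key step, as you state it, is not justified.

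You claim that the hypotheses force $\widetilde H_{q}(F;\mathcal{M})=0$ for $q\le\ell$ and \emph{every} local system $\mathcal{M}$ on $F$, and you lean on this (``it makes the $\pi_{1}(F)$-action moot'') to bypass the Hurewicz problem for a possibly non-simply-connected fibre. This cannot come out of your spectral sequence: the only coefficients available on the total space are $f^*\cE$ with $\cE$ a local system on $K'$, and the restriction of $f^*\cE$ to a fibre is a \emph{constant} system, because the composite $\pi_{1}(F)\to\pi_{1}(K)\to\pi_{1}(K')$ is trivial by exactness. So the comparison argument can only yield $\widetilde H_{q}(F;A)=0$ for constant abelian coefficients $A$, never for a genuinely twisted $\mathcal{M}$ on $F$, and the local-coefficient Hurewicz theorem you want to invoke is not available from the hypotheses. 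The gap is repairable: by exactness of the homotopy sequence and injectivity of $\pi_{1}(K)\to\pi_{1}(K')$, the group $\pi_{1}(F)$ is a quotient of $\pi_{2}(K')$, hence abelian, and $F$ is connected by the $\pi_{0}$- and $\pi_{1}$-hypotheses; therefore $H_{1}(F;\Z)=0$ already gives $\pi_{1}(F)=0$, the ordinary Hurewicz theorem gives $\pi_{q}(F)=0$ for $q\le\ell$, and the long exact sequence of the fibration gives the statement. Note also that in the peeling step you need the surjectivity hypothesis in degree $q+1\le\ell+1$ to kill the transgression into $E^{2}_{0,q}$, and then the choice $\cE=\Z[\pi_{1}(K')]$ to pass from the coinvariants $H_{0}(K';\cE\otimes\mathcal{H}_{q}(F))$ to the stalk $H_{q}(F;\Z)$ --- at which point the paper's universal-cover trick reappears, which is why the direct argument of \lemref{lem:quillenfini} in the text is preferable for brevity.
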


\begin{proof}
Let $\widetilde{K}\xrightarrow{p} K$ be the universal cover  of $K$.
The key observation in the proof given by Quillen (\cite{MR0223432}) is the degeneracy of the Serre spectral sequence,
$H_{j}(K,{p}^*\Z)\cong H_{j}(\widetilde{K};\Z)$,
for coefficients $\Z$ on $\widetilde{K}$ and the induced local coefficient system $p^*\Z$ on the homology of the fibres.
The result follows from the classic Whitehead theorem applied to the universal covers.
\end{proof}

%
\begin{proposition}\label{prop:cone}
Let $X$ be a compact filtered space 
 and $\rc X$ be the open cone, of apex $\tv$, with the conic filtration.
Let $\ov{p}$ be a   perversity on $\rc X$,  
we also denote  by $\ov{p}$ the perversity induced on $X$.
Then, for any local coefficient system $\cE$ on $\crG_{\ov{p}}X$, we have
\begin{equation}\label{equa:conegajer}
H_{j}(\crG_{\ov{p}}\rc X, \crG_{\ov{p}} X ;\cE ) =
0\quad\text{for any}\quad j\leq D\ov{p}(\tv)+1.
\end{equation}
\end{proposition}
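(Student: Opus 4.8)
The plan is to show that the pair inclusion $\crG_{\ov p}X\hookrightarrow \crG_{\ov p}\rc X$ is, up to the critical degree $D\ov p(\tv)+1$, as well connected as possible, by exhibiting an explicit simplicial homotopy that cones off $\crG_{\ov p}\rc X$ onto $\crG_{\ov p} X$ in low degrees. Concretely, writing $k=D\ov p(\tv)$, I would argue in two steps: first that the inclusion $j\colon X\hookrightarrow \rc X$ (as $X\times\{t\}$ for some fixed $t\in\,]0,1[$) induces a simplicial map on Gajer spaces, and second that every $\ov p$-full simplex of $\rc X$ of dimension $\le k+1$ can be pushed into $X$ by a radial homotopy staying $\ov p$-full, because in that range the allowability condition at the apex forces $\sigma^{-1}(\tv)=\emptyset$. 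This last point is exactly the mechanism already used in \propref{prop:isolatedgajer}: for a simplex $\sigma\colon\Delta^{j}\to\rc X$ with $j\le k+1$, the inequality $\dim\sigma^{-1}(\tv)\le j-D\ov p(\tv)-2\le -1$ gives $\sigma^{-1}(\tv)=\emptyset$, so $\sigma$ factors through $\rc X\menos\{\tv\}\cong_{s} X\times\,]0,1[$, and by \propref{prop:homotopyxR} (or rather its cone analogue) such a simplex carries the same $\ov p$-fullness data as a simplex of $X$.

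The key steps, in order. (1) Identify the relevant skeleta: show that the inclusion induces an isomorphism $(\crG_{\ov p}X)^{k+1}\cong (\crG_{\ov p}\rc X)^{k+1}$ on $(k+1)$-skeleta. For this, observe that $\rc X\menos\{\tv\}$ is stratified homeomorphic to $X\times\,]0,1[$, that by \propref{prop:homotopyxR} $\crG_{\ov p}(X\times\,]0,1[)\cong \sing(\,]0,1[)\times\crG_{\ov p}X$, and that the projection $X\times\,]0,1[\to X$ is a stratified homotopy equivalence, hence induces a homotopy equivalence on Gajer spaces by \corref{cor:gajerhomotopy}; combined with the vanishing $\sigma^{-1}(\tv)=\emptyset$ in dimensions $\le k+1$, this gives that, through dimension $k+1$, the $\ov p$-full simplexes of $\rc X$ are exactly those of $X\times\,]0,1[$, which in turn map isomorphically onto those of $X$ after collapsing the contractible $]0,1[$-factor. (2) Translate the skeletal statement into the homology vanishing of the pair. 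Since $C_{j}(\crG_{\ov p}\rc X;\cE)$ and $C_{j}(\crG_{\ov p}X;\cE)$ only depend on the $j$-simplexes, for $j\le k+1$ the relative chain group $C_{j}(\crG_{\ov p}\rc X,\crG_{\ov p}X;\cE)$ vanishes, so $H_{j}$ of the pair vanishes for $j\le k$ automatically, and for $j=k+1$ one still needs that the relative cycles are boundaries. (3) Handle the borderline degree $j=D\ov p(\tv)+1$ separately: here the relative chain group may be nonzero, so I would instead use the straight-line (radial) homotopy $h_{s}(x,u)=(x,(1-s)u+s\,u_{0})$ on $\rc X\menos\{\tv\}\cong X\times\,]0,1[$ pulling everything to level $u_{0}$, note it stays $\ov p$-full since $\ov p$-fullness at all strata other than $\tv$ only sees the $X$-coordinate (again \propref{prop:homotopyxR}) and the apex preimage stays empty, and conclude that the inclusion $\crG_{\ov p}X\hookrightarrow (\crG_{\ov p}\rc X)^{k+1}$ admits a deformation retraction in this range, killing $H_{k+1}$ of the pair with any local coefficients.

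The main obstacle I anticipate is the careful bookkeeping at the top degree $j=k+1$: one must be sure that the radial contraction toward the level $X\times\{u_{0}\}$ can be realized as a genuine \emph{simplicial} homotopy of $\crG_{\ov p}\rc X$ restricted to the $(k+1)$-skeleton — i.e. that the prism decomposition of $\Delta^{j}\times\Delta[1]$ produces only $\ov p$-full simplexes — and that it is compatible with the local coefficient system $\cE$ (which lives on $\crG_{\ov p}X$, so one needs the homotopy to restrict to the identity on $\crG_{\ov p}X$ and to respect base points of simplexes up to the $\Pi$-action). A clean way around this is to phrase the whole argument through \lemref{lem:quillenfini}: prove the map $\crG_{\ov p}X\to \crG_{\ov p}\rc X$ is an iso on $\pi_{0},\pi_{1}$ and on $H_{j}(-;\cE)$ for $j\le k$ with surjection on $H_{k+1}$ — but since we want the \emph{relative} vanishing stated in \eqref{equa:conegajer}, it is more direct to establish the skeletal isomorphism of step (1) and then read off (2)–(3) from the long exact sequence of the pair, which is exactly what the hypotheses were tailored to give. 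I would also double-check the degenerate case $\rc\,\emptyset=\{\tv\}$ and the case $D\ov p(\tv)<0$, where the statement is vacuous, so no argument is needed there.
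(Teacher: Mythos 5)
Your overall strategy --- use the apex--allowability bound to control low skeleta, invoke \propref{prop:homotopyxR}, and conclude with a long exact sequence of a pair --- is the same as the paper's, but the pivotal claim in your steps (1)--(2) is false. Writing $k=D\ov{p}(\tv)$, the mechanism of \propref{prop:isolatedgajer} identifies the $(k+1)$-skeleton of $\crG_{\ov{p}}(\rc X)$ with that of $\crG_{\ov{p}}(\rc X\menos\{\tv\})$, \emph{not} with that of $\crG_{\ov{p}}X$: a $\ov{p}$-full simplex of $\rc X$ of dimension $\leq k+1$ does avoid $\tv$ and hence lands in $\rc X\menos\{\tv\}\cong_{s} X\times\,]0,1[$, but it may vary in the cone coordinate and so need not be a simplex of $X$ (embedded as a slice $X\times\{t\}$). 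Under the isomorphism $\crG_{\ov{p}}(X\times\,]0,1[)\cong \sing(]0,1[)\times\crG_{\ov{p}}X$ of \propref{prop:homotopyxR}, collapsing the $]0,1[$-factor is a homotopy equivalence, not a bijection on simplexes. Consequently the relative chain groups $C_{j}(\crG_{\ov{p}}\rc X,\crG_{\ov{p}}X;\cE)$ do not vanish for $j\leq k+1$ (as soon as $k\geq 0$, a $1$-simplex $t\mapsto (x_{0},\gamma(t))$ with $x_{0}$ regular and $\gamma$ non-constant is $\ov{p}$-full but does not lie in $\crG_{\ov{p}}X$), so step (2) collapses; and your step (3), which you deploy only at the borderline degree $k+1$, would in fact be needed in \emph{every} degree, while the very point it requires (that the prism decomposition of the radial homotopy yields only $\ov{p}$-full simplexes, compatibly with $\cE$) is exactly what you leave unverified.

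The repair is what the paper does: the skeletal isomorphism gives $H_{j}(\crG_{\ov{p}}\rc X,\crG_{\ov{p}}(\rc X\menos\{\tv\});\cE)=0$ for $j\leq D\ov{p}(\tv)+1$, and one then replaces the subcomplex $\crG_{\ov{p}}(\rc X\menos\{\tv\})$ by $\crG_{\ov{p}}X$ using \propref{prop:homotopyxR} together with \corref{cor:gajerhomotopy} (the slice inclusion $X\hookrightarrow X\times\,]0,1[$ is a stratified homotopy equivalence, hence induces isomorphisms on homology with local coefficients), comparing the long exact sequences of the two pairs by the five lemma. Your deformation-retraction route could also be completed --- the prism simplexes gain at most one dimension on stratum preimages, as in the proof of \propref{prop:gajerkan} --- but as written the proposal neither carries out that verification nor correctly reduces the statement to it.
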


\begin{proof}
We apply \propref{prop:isolatedgajer} to $\rc X$,  its apex $\tv$ and $\ell=D\ov{p}(\tv)+1$. 
The equality \eqref{equa:conegajer} follows from the isomorphism between
the $\ell$-skeleta, $(\crG_{\ov{p}}(\rc X))^\ell\cong (\crG_{\ov{p}}(\rc X\menos\{\tv\}))^\ell$, 
\propref{prop:homotopyxR} and the long exact 
homology sequence of the pair $(\crG_{\ov{p}}\rc X,\crG_{\ov{p}}X)$.
\end{proof}

For sake of simplicity let us take $\Z$ as coefficients. 
The determination done in \propref{prop:cone} for the homology of $\crG_{\ov{p}}\rc X$ gives a result of different spirit than the $\ov{p}$-intersection homology of a cone, $H_{*}^{\ov{p}}(\rc X)$.  
 If  $j\leq D\ov{p}(\tv)$, this last homology verifies $H_{j}^{\ov{p}}(\rc X) \cong
H^{\ov{p}}_{j}(X)$,
which is similar to \eqref{equa:conegajer}.
But we also have $H^{\ov{p}}_{j}(\rc X)=0$ if  $j> D\ov{p}(\tv)$, see \cite[Proposition 3]{CST8} or \cite[Proposition 5.2]{CST3}.
This \emph{last condition is not true for the homology of the Gajer space on a cone.}

Before  giving concrete examples, let's analyze
 the difference between the elements of $C_{*}^{\ov{p}}(Y)$ and those of $C_{*}(\crG_{\ov{p}}Y)$
 for a perverse space $(Y,\ov{p})$.
Let  $\xi=\sum_{i}r_{i}\sigma_{i}$ be a singular chain on  $Y$. For having
$\xi\in C_{*}(\crG_{\ov{p}}Y)$, the allowability condition has to be satisfied for all the faces of all $\sigma_{i}$.
In contrast, in $C_{*}^{\ov{p}}(Y)$ there is no requirement on the faces that cancel out in the boundary $\partial \xi$.
As already observed in \cite{MR1489215}, the 
\emph{homology of $\crG_{\ov{p}}Y$ is not isomorphic to the $\ov{p}$-intersection homology.}
To see that, let $Y=\rc (S^2\times S^3)$ with the perversity determined by the value 
$\ov{p}(\tv)=D\ov{p}(\tv)=2$ on the apex. 
Using the determination (\propref{prop:coneandhomotopy}) of the homotopy groups of the Gajer space of a cone, 
the only non-zero intersection homotopy group is
$\pi_{2}(\crG_{\ov{2}}Y)=\Z$. Therefore, the space $\crG_{\ov{2}}Y$ is the Eilenberg-MacLane space $K(\Z,2)$.
The homology of $\crG_{\ov{2}}Y$ does not fit with the $\ov{2}$-intersection homology groups of $Y$ which are zero in degrees strictly greater than~2.
 We continue with other examples of this feature 
 which do not need the use of $\ov{p}$-intersection homotopy groups.
\begin{example}\label{exam:gajernotintersection}
Let $L$ be  an oriented $\ell$-dimensional connected compact manifold verifying 
$ \pi_\ell (L)=0$ (as the torus for $\ell=2$), endowed with the GM-perversity $\ov{0}$.
Below, we prove: 
$$
H_\ell^{\ov 0} (\rc L;\Z) =  0 
\ne H_{\ell} (L;\Z) =H_\ell (\crG_{\ov 0}( \rc L) ;\Z).
$$
Let us see that.
The dual perversity of $\ov 0$ being $D\ov 0= \ov{\ell-1}$, we deduce from the cone formula 
for intersection homology $H_\ell^{\ov 0} (\rc L;\Z) =  0 $.
The second part of the claim holds if we prove that the natural inclusion induces an isomorphism 
$ H_\ell (L;\Z)  \to H_\ell (\crG_{\ov 0}( \rc L);\Z) $. 
This map is  an epimorphism since, at the level of chain complexes, we have 
(see \propref{prop:isolatedgajer})
$C_{\leq \ell}^{\ov 0} (L\times ]0,1[;\Z) = C_{\leq \ell} (\crG_{\ov 0} \rc L;\Z)$.

Let  $[\alpha] \in H_\ell (L;\Z) $ be the orientation class of $L$. 
We reason by the absurd and suppose   $[\alpha] =  0$ in $H_\ell (\crG_{\ov 0}( \rc L);\Z)$. 
The claim will be proven if we get a contradiction.
By hypothesis, there exists
$$\gamma = \sum_{i\in I} n_i \sigma_i  \in C_{\ell + 1} (\crG_{\ov 0}( \rc L);\Z),$$ 
with $\alpha = \partial \gamma$.
Let $\tv$ be the apex of $\rc L$.
The allowability condition of \defref{def:homotopygeom} implies that, for each simplex $\sigma_i \colon \Delta^{\ell +1} \to \rc L$, the set
$\sigma_i^{-1}(\tv)$ 
is finite and included in the interior of ${\Delta}^{\ell+1}$.
Thus, for  each  $i \in I$, the restriction $\widetilde \sigma_i \colon \partial \Delta^{\ell+1} \to \rc L$ 
of $\sigma_i$ takes value in $\rc L \menos \{ \tv\}$.
This restriction defines a homotopy class in $\pi_\ell ( \rc L \menos \{ \tv\}) = \pi_\ell (L)=0$. 
So, there exists a continuous map $\tau_i \colon \Delta^{\ell+1} \to \rc L \menos \{ \tv\}$
extending $\widetilde \sigma_i$. 
We obtain $\tau_i \in  
 C_{\ell + 1} ( \rc L \menos \{ \tv\};\Z)
$ with $\partial \tau_i = \partial \sigma_i$. Finally,
$$
[\alpha] = \left[ \sum_{i\in I} n_i  \partial \sigma_i\right] = \left[ \sum_{i\in I} n_i  \partial \tau_i\right]= 
 \left[ \partial \sum_{i\in I} n_i   \tau_i\right] = 0
$$
in  $ H_{\ell } ( \rc L \menos \{ \tv\};\Z) =  H_{\ell } ( L;\Z)
$.
We get a contradiction since $[\alpha]$ is the orientation class.
\end{example}

\medskip
Evidently, as the homology of $\crG_{\ov{p}}X$ is the homology of a space, it possesses  Mayer-Vietoris sequences. 
But the Mayer-Vietoris sequence that we are considering below is related to an open covering of the  space $X$. 
The existence of such  sequence is a consequence of a theorem of $\cU$-small chains that we first establish. 

\begin{proposition}\label{prop:Usmall}
Let $(X,\ov{p})$ be a perverse space with a locally finite stratification, $\cU$ be an open covering  of $X$
and $\cE$ be a local coefficient system on $\crG_{\ov{p}}X$. 
We denote 
$\crG^{\cU}_{\ov{p}}X$
the simplicial subset  whose simplexes are the $\ov{p}$-full simplexes included in an element of $\cU$. 
Then the following properties are verified.
\begin{enumerate}[1)]
\item There exists a chain map $\sd \colon  C_*(\crG_{\ov{p}}X;\cE) \to C_*(\crG_{\ov{p}}X;\cE) $ such that, 
for each $\ov{p}$-full simplex $\sigma$, there exists $r \in \N$ 
with $\ sd^r \sigma \in  C_*(\crG_{\ov{p}}^\cU X; \cE)$.
\item The inclusion $\iota \colon  C_*(\crG_{\ov p}^{\cU}X;\cE) \to C_*(\crG_{\ov p}X;\cE)$ induces an isomorphism in homology.
\end{enumerate}
\end{proposition}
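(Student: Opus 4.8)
The plan is to follow the classical Alexander–Whitney / barycentric subdivision argument that proves the theorem of $\cU$-small chains for ordinary singular homology, and to check that every step is compatible with the $\ov{p}$-fullness condition. First I would recall the standard subdivision operator. For ordinary singular chains one defines $\sd$ by induction on dimension using the cone construction: $\sd(\sigma) = \sigma_{\#}(\sd(\id_{\Delta^\ell}))$ and $\sd(\id_{\Delta^\ell}) = b_{\ell}\cdot \sd(\partial \id_{\Delta^\ell})$, where $b_\ell$ is the barycentre of $\Delta^\ell$ and $b\cdot(-)$ is the affine cone on a chain based at $b$. The crucial geometric point is that each simplex appearing in $\sd(\id_{\Delta^\ell})$ is an affine simplex of $\Delta^\ell$, hence its composite with $|\sigma|$ has the form $|\sigma|\circ a$ with $a\colon \Delta^k \to \Delta^\ell$ affine. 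Together with the homotopy operator $T$ satisfying $\partial T + T\partial = \sd - \id$, also built by affine cones, this gives the two classical facts: iterating $\sd$ eventually pushes any simplex inside a member of $\cU$ (by the Lebesgue number lemma applied to $\sigma^{-1}\cU$ on the compact $\Delta^\ell$), and $\sd$ is chain homotopic to the identity.

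The point that needs genuine verification — and the one I expect to be the main obstacle — is that $\sd$ and $T$ restrict to the subcomplex of $\ov{p}$-full chains, i.e. that if $\sigma$ is $\ov{p}$-full then every simplex occurring in $\sd(\sigma)$ and in $T(\sigma)$ is again $\ov{p}$-full. For this I would use the reformulation of $\ov{p}$-fullness given in the Remark after \defref{def:gajersimplicialset}: $\sigma$ is $\ov{p}$-full iff for every face $F$ of $\Delta^\ell$ and every singular stratum $S$ one has $\dim(\sigma^{-1}S \cap F)\leq \dim F - D\ov{p}(S)-2$. A simplex in $\sd(\sigma)$ is of the form $|\sigma|\circ a$ with $a\colon \Delta^k\to\Delta^\ell$ an affine embedding whose image is a $k$-dimensional polyhedron contained in some face $F_a$ of $\Delta^\ell$ with $\dim F_a \le k$ ... in fact the iterated barycentric subdivision of a face $F$ has simplices lying inside $F$ with the same dimension as $F$, so $a(\Delta^k)\subseteq F$ with $\dim F = k$ whenever the simplex is non-degenerate. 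Then every face $G$ of $\Delta^k$ maps affinely onto a polyhedron $a(G)\subseteq F$ of dimension $\dim G$, and since $a$ is affine and injective, $(|\sigma|\circ a)^{-1}S\cap G = a^{-1}(\sigma^{-1}S \cap a(G))$, so $\dim((|\sigma|\circ a)^{-1}S\cap G) = \dim(\sigma^{-1}S\cap a(G)) \le \dim(\sigma^{-1}S\cap F)\le \dim F - D\ov{p}(S)-2 = \dim G' - D\ov{p}(S)-2$ for the appropriate face — the bookkeeping here, matching dimensions of faces of $\Delta^k$ with dimensions of their affine images inside faces of $\Delta^\ell$, is the delicate part, using \eqref{equa:dimunion} for the polyhedral dimension. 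The same affine-cone argument handles $T$: cones $b_\ell \cdot \beta$ on affine subsimplices of $\Delta^{\ell}\times[0,1]$, composed with $|\sigma|$ precomposed with a projection, again only increase preimage dimensions in a controlled way, exactly as in the proof of \propref{prop:gajerkan} where $\dim(|\sigma|\circ\rho_k)^{-1}S \le \dim\sigma^{-1}S + 1$.

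Granting this, part (1) follows: given a $\ov{p}$-full $\sigma\colon\Delta^\ell\to X$ included in no fixed member of $\cU$, the sets $\{\sigma^{-1}U : U\in\cU\}$ form an open cover of the compact metric space $\Delta^\ell$; by the Lebesgue number lemma and the standard estimate $\mathrm{mesh}(\sd^r \Delta^\ell)\to 0$, some $r$ makes every simplex of $\sd^r\sigma$ land in a single $\sigma$-preimage $\sigma^{-1}U$, hence in $U$; and by the previous paragraph all those simplices are $\ov{p}$-full, so $\sd^r\sigma\in C_*(\crG^{\cU}_{\ov{p}}X;\cE)$. For part (2) I would run the usual argument: iterate to get operators $D_m = \sum_{i=0}^{m-1} T\sd^i$ with $\partial D_m + D_m\partial = \sd^m - \id$, and then for a general cycle $z\in C_*(\crG_{\ov{p}}X;\cE)$ choose $m$ large enough (using local finiteness of the stratification to control things, though really only finitely many simplices occur in $z$) that $\sd^m z\in C_*(\crG^{\cU}_{\ov{p}}X;\cE)$; this shows $\iota_*$ is surjective on homology, and a relative version of the same homotopy — applied to a chain in $C_*(\crG^{\cU}_{\ov{p}}X;\cE)$ that bounds in $C_*(\crG_{\ov{p}}X;\cE)$ — gives injectivity. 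Throughout one must keep track of the local coefficient system $\cE$: since $\sd$ and $T$ are built from affine maps that fix the $0$th vertex of each simplex (the barycentric cone is based so that the leading vertex of each new simplex is a barycentre, and one orders subsimplices so the coefficient bookkeeping in $C_*(-;\cE)$ is consistent, exactly as in \cite[Appendix I]{MR0210125}), the operators are $\cE$-linear, so the homotopy identities hold with local coefficients. The only real novelty over the classical proof is the dimension estimate in the second paragraph; everything else is the textbook argument transported verbatim into $\crG_{\ov{p}}X$.
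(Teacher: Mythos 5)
There is a genuine gap, and it sits exactly at the point you yourself flag as delicate. Your plan is to use the ordinary barycentric subdivision (cone at the fixed barycentre $b_\ell$) and to check that it preserves $\ov{p}$-fullness via the estimate $\dim(\sigma^{-1}S\cap a(G))\le \dim(\sigma^{-1}S\cap F)\le \dim F-D\ov{p}(S)-2$, which you convert into the needed bound by asserting that a nondegenerate $k$-simplex of the subdivision satisfies $a(\Delta^k)\subseteq F$ with $\dim F=k$. That assertion is false: a simplex of the barycentric subdivision is spanned by the barycentres of a flag $F_0\subsetneq\cdots\subsetneq F_k$ and is only contained in $F_k$, whose dimension can exceed $k$ (already the edge from the barycentre of $\Delta^2$ to a vertex lies in no proper face). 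Consequently the fullness of $\sigma$ only yields $\dim F_k-D\ov{p}(S)-2$, not $\dim G-D\ov{p}(S)-2$, and the conclusion actually fails: take a $\ov{p}$-full $\sigma\colon\Delta^2\to X$ with $D\ov{p}(S)=-1$ and $\sigma^{-1}S$ a curve in $\inte\Delta^2$ containing a subsegment of the straight segment from the barycentre to a vertex (or, for a GM-perversity, a $3$-simplex with $D\ov{p}(S)=0$ and $\sigma^{-1}S$ a curve lying in one of the interior $2$-planes of the subdivision); then the corresponding new edge (resp. $2$-face) violates allowability, so $\sd\sigma\notin C_*(\crG_{\ov{p}}X)$. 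The same defect affects your construction of the homotopy $T$, since it is built from the same fixed cone points.

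The paper's proof repairs precisely this: it does not use the barycentric subdivision but a \emph{pseudobarycentric} one, defining $\sd\sigma=u_\sigma\ast\sd(\partial\sigma)$ and $T(\sigma)=u_\sigma\ast(\sigma-T(\partial\sigma))$ where the cone point $u_\sigma\in\inte\Delta_\sigma$ is chosen \emph{ad hoc for each $\sigma$}, in general position with respect to the polyhedra carrying the sets $\sigma^{-1}S$, so that the new simplexes and all their faces stay $\ov{p}$-allowable (this is the content of \cite[Propositions 4.4 and 4.5]{CST8}; the same general-position device reappears in the paper in the choice of $a'_0$ in the proof of \thmref{InterHomolGajer01}). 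Two further points then need care and are handled there: since $u_\sigma$ depends on $\sigma$, naturality is lost and $\partial\,\sd=\sd\,\partial$ must be checked directly by induction; and the mesh-shrinking needed for your Lebesgue-number argument in part 1) is no longer the standard barycentric estimate but must be arranged through the choice of the points $u_\sigma$. Your treatment of the local coefficients and the deduction of part 2) from $\id-\sd=T\partial+\partial T$ are fine and agree with the paper; the missing idea is the generic, simplex-dependent choice of cone point, without which the operator $\sd$ does not even map $C_*(\crG_{\ov{p}}X;\cE)$ to itself.
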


\begin{proof}
Let $\sigma\colon \Delta^\ell \to X$ be a $\ov{p}$-full simplex.
The construction comes from a  process of subdivision applied to  $\sigma$.   
Here we have to adapt it to allowable simplexes and to the presence of local coefficients.
For  local coefficients, the adaptation is classic: if $F$ and $K$ are faces of a simplex $\Delta$, 
with the join $\sigma_{F}\ast \sigma_{K}$ defined
and $a\in \cE(\sigma_{K}(0))$, one sets
$\sigma_{F}\ast (a\,\sigma_{K})=\cE(\sigma_{F}(0),\sigma_{K}(0))^{-1}(a) \sigma_{F}\ast \sigma_{K}$.
The adaptation to the compatibility with perversities is obtained by replacing the classic barycentric subdivision
with the pseudobarycentric subdivision recalled in \remref{rem:pseusobarycenter}.
As in the classical topological case, this gives a chain map 
$\sd \colon  C_*(\crG_{\ov{p}}X;\cE) \to C_*(\crG_{\ov{p}}X;\cE) $
satisfying the first point of the statement,
see \cite[Proposition 7]{CST8} in the case of ordinary coefficients.

\smallskip
The proof of 2) uses  a morphism
$T\colon C_{*}(X;\cE)\to C_{*+1}(X;\cE)$
verifying $\id -\sd =T\circ \partial+\partial\circ T$
and sending a $\ov{p}$-allowable simplex on a $\ov{p}$-allowable chain.
Similarly to that of the previous subdivision, the construction of the map $T$ is an adaptation of the classical case, 
with an induction from the pseudobarycentric subdivision.
If $\sigma$ is a $\ov{p}$-full simplex, we obtain a chain $T(\sigma)\in C_*(\crG_{\ov p}X;\cE)$,
as in  \cite[Proposition 6]{CST8}   for the case of ordinary coefficients. 
\end{proof}

The following two properties arise from \propref{prop:Usmall}, as in the classic case (see \cite[Chapter 6]{MR957919}).

\begin{theoremb}\label{thm:MV}
Let $(X,\ov{p})$ be a  perverse space with a locally finite stratification,
$\{U,V\}$ be an open covering of $X$ and 
$\cE$ be a local coefficient system on $\crG_{\ov{p}}X$. 
Then, there exists a Mayer-Vietoris long exact sequence, 
$$
\dots \to
H_{*}(\crG_{\ov p} (U \cap V);\cE) \to H_{*}(\crG_{\ov p} U;\cE) \oplus H_{*}(\crG_{\ov p} V;\cE)  \to  
H_{*}(\crG_{\ov p} X  ;\cE) 
\to  H_{*-1}(\crG_{\ov p} (U \cap V);\cE)  \to  \dots 
$$
\end{theoremb}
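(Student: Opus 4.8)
The plan is to derive the Mayer--Vietoris sequence from \propref{prop:Usmall} in exactly the same way one deduces it in ordinary singular homology, so most of the work has already been done. First I would fix the open covering $\cU=\{U,V\}$ of $X$. Since $X$ has a locally finite stratification and $U,V$ are open subsets endowed with the induced filtrations and induced perversities (see \defref{def:backperversity}), the subspaces $\crG_{\ov p}U$ and $\crG_{\ov p}V$ are simplicial subsets of $\crG_{\ov p}X$, and their intersection is $\crG_{\ov p}(U\cap V)$: a $\ov p$-full simplex of $X$ with image in $U\cap V$ is precisely a $\ov p$-full simplex of $U$ whose image lies in $V$. Moreover, by definition $\crG^{\cU}_{\ov p}X=\crG_{\ov p}U+\crG_{\ov p}V$ inside $\crG_{\ov p}X$ (the simplicial subset generated by both), so at the level of chain complexes with coefficients in a local system $\cE$ on $\crG_{\ov p}X$ we have
$$
C_*(\crG^{\cU}_{\ov p}X;\cE)=C_*(\crG_{\ov p}U;\cE)+C_*(\crG_{\ov p}V;\cE)
$$
as subcomplexes of $C_*(\crG_{\ov p}X;\cE)$ (here each $\cE$ on the subcomplexes means its restriction along the inclusion).

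Next I would write down the short exact sequence of chain complexes
$$
0\to C_*(\crG_{\ov p}(U\cap V);\cE)\xrightarrow{(\iota_U,-\iota_V)} C_*(\crG_{\ov p}U;\cE)\oplus C_*(\crG_{\ov p}V;\cE)\xrightarrow{\iota_U+\iota_V} C_*(\crG^{\cU}_{\ov p}X;\cE)\to 0,
$$
which is exact because $C_*(\crG_{\ov p}(U\cap V);\cE)=C_*(\crG_{\ov p}U;\cE)\cap C_*(\crG_{\ov p}V;\cE)$ and the sum of the two subcomplexes is $C_*(\crG^{\cU}_{\ov p}X;\cE)$; compatibility of the local-system differentials with these inclusions is immediate since all the relevant base vertices and edges $\sigma([01])$ are unchanged by the inclusions. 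The associated long exact homology sequence then reads
$$
\dots\to H_*(\crG_{\ov p}(U\cap V);\cE)\to H_*(\crG_{\ov p}U;\cE)\oplus H_*(\crG_{\ov p}V;\cE)\to H_*(\crG^{\cU}_{\ov p}X;\cE)\to H_{*-1}(\crG_{\ov p}(U\cap V);\cE)\to\dots
$$

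Finally I would invoke part 2) of \propref{prop:Usmall}: the inclusion $\iota\colon C_*(\crG^{\cU}_{\ov p}X;\cE)\to C_*(\crG_{\ov p}X;\cE)$ induces an isomorphism in homology, $H_*(\crG^{\cU}_{\ov p}X;\cE)\cong H_*(\crG_{\ov p}X;\cE)$. Substituting this isomorphism into the long exact sequence above yields the claimed Mayer--Vietoris sequence. I do not anticipate a serious obstacle here; the only point requiring a word of care is the bookkeeping with local coefficients --- checking that restricting $\cE$ to the various simplicial subsets and taking the corresponding chain complexes is compatible with the sum/intersection decomposition --- but this is routine once one notes that a $j$-chain in $C_j(K;\cE)$ is a formal sum $\sum a_i\sigma_i$ with $a_i\in\cE(\sigma_i(0))$ and that inclusions of simplicial subsets preserve vertices, so the groups $\cE(\sigma_i(0))$ and the differentials match verbatim. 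Thus the genuine content is entirely contained in the $\cU$-small chains theorem \propref{prop:Usmall}, and the present statement is its formal consequence, exactly as in \cite[Chapter 6]{MR957919}.
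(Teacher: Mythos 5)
Your proposal is correct and is exactly the argument the paper has in mind: the authors deduce Theorem \ref{thm:MV} from the $\cU$-small chains result (\propref{prop:Usmall}) "as in the classic case," i.e.\ via the short exact sequence of chain subcomplexes and the quasi-isomorphism $C_*(\crG^{\cU}_{\ov p}X;\cE)\hookrightarrow C_*(\crG_{\ov p}X;\cE)$, which is precisely what you wrote out. Your added checks (that $\ov p$-full simplexes of $X$ with image in $U$ coincide with $\ov p$-full simplexes of $U$ for the induced filtration and perversity, and the local-coefficient bookkeeping) are the only points left implicit in the paper, and you handle them correctly.
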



\begin{theoremb}\label{thm:excision}
Let $(X,\ov{p})$ be a perverse space, $\cE$ be a local coefficient system on $\crG_{\ov{p}}X$
and $Z\subset A$ be two subspaces of $X$ 
such that $\ov{Z}$ is included in the interior of $A$. Then there is an isomorphism
$$H_{*}(\crG_{\ov{p}}(X\menos Z),\crG_{\ov{p}}(A\menos Z);\cE)\cong
H_{*}(\crG_{\ov{p}}X,\crG_{\ov{p}}A;\cE).$$
\end{theoremb}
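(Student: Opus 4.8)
The plan is to mimic the classical proof of the excision theorem from singular homology (as in Hatcher or \cite{MR957919}), using the $\cU$-small chains machinery just established in \propref{prop:Usmall} as the technical engine. First I would set $U = X\menos Z$ and $V = \inte A$, so that $\{U,V\}$ is an open covering of $X$: indeed $U\cup V = (X\menos Z)\cup \inte A \supseteq (X\menos Z)\cup Z = X$ since $\ov Z\subset \inte A$ forces $Z\subset \inte A$. Note also $U\cap V = \inte A \menos Z$ and we have the obvious inclusions of perverse subspaces, all carrying the induced filtration and induced perversity, so that $\crG_{\ov p}$ is functorial on them by \propref{prop:gajermap} (the induced perversities are compatible since they all restrict the same $\ov p$ on $X$).

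The key step is to prove the relative version of \propref{prop:Usmall}: the inclusion of the subcomplex $C_*(\crG^{\cU}_{\ov p}X;\cE)$ generated by $\ov p$-full simplexes lying in some element of $\cU$ induces an isomorphism on the homology of the quotient pairs. Concretely, consider the short exact sequences of chain complexes
\[
0\to C_*(\crG_{\ov p}A;\cE)\to C_*(\crG_{\ov p}X;\cE)\to C_*(\crG_{\ov p}X;\cE)/C_*(\crG_{\ov p}A;\cE)\to 0
\]
and the analogous one with the $\cU$-small complexes. Since $\sd$ and the chain homotopy $T$ from the proof of \propref{prop:Usmall} are constructed simplex-by-simplex and send $\ov p$-full simplexes in a subspace $W\subset X$ to $\ov p$-full chains in $W$ (this is exactly how the ad'hoc points $u_\sigma$ and the join operation behave: they stay inside the carrier of $\sigma$), both $\sd$ and $T$ restrict to the subcomplexes $C_*(\crG_{\ov p}A;\cE)$ and $C_*(\crG^{\cU}_{\ov p}A;\cE)$, hence descend to the quotients. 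Therefore the relative inclusion $C_*(\crG^{\cU}_{\ov p}X,\crG^{\cU}_{\ov p}A;\cE)\to C_*(\crG_{\ov p}X,\crG_{\ov p}A;\cE)$ is a chain homotopy equivalence, so it is an isomorphism in homology; one uses the five lemma on the long exact sequences of the pairs together with \propref{prop:Usmall} 2) applied both to $X$ and to $A$.

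Next I would identify $H_*(\crG^{\cU}_{\ov p}X,\crG^{\cU}_{\ov p}A;\cE)$ with $H_*(\crG_{\ov p}U,\crG_{\ov p}(U\cap V);\cE)$. Here the carrier of any $\ov p$-full simplex of $\crG^{\cU}_{\ov p}X$ lies in $U$ or in $V$; those lying in $U$ generate $C_*(\crG_{\ov p}U;\cE)$, and modulo $C_*(\crG^{\cU}_{\ov p}A;\cE)$ the simplexes whose carrier sits in $V$ are absorbed (their carrier lies in $V\subset A$ since $V=\inte A$), so the inclusion induces an isomorphism
\[
C_*(\crG_{\ov p}U;\cE)/C_*(\crG_{\ov p}(U\cap V);\cE)\;\xrightarrow{\ \cong\ }\;C_*(\crG^{\cU}_{\ov p}X;\cE)/C_*(\crG^{\cU}_{\ov p}A;\cE),
\]
by the same bijective-on-generators argument that appears in the classical excision proof. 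Combining the two displayed isomorphisms, passing through the long exact sequences of the relevant pairs, gives
\[
H_*(\crG_{\ov p}(X\menos Z),\crG_{\ov p}(A\menos Z);\cE)=H_*(\crG_{\ov p}U,\crG_{\ov p}(U\cap V);\cE)\cong H_*(\crG_{\ov p}X,\crG_{\ov p}A;\cE),
\]
which is the claim. I expect the main obstacle to be the careful bookkeeping in the second step — verifying that the subdivision operator $\sd$ and the homotopy $T$ of \propref{prop:Usmall}, which were defined on $C_*(\crG_{\ov p}X)$, genuinely preserve the relevant subcomplexes $C_*(\crG_{\ov p}A)$ and the $\cU$-small complexes, so that they descend to the quotients and the five lemma applies; this is where the fact that the auxiliary points $u_\sigma$ and all joined simplexes remain within the carrier (and remain $\ov p$-full) does the real work. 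The local coefficient bookkeeping along $\sigma([01])$ is routine once the underlying geometric subdivision is under control, as in \propref{prop:Usmall}, and the local finiteness of the stratification is inherited by $A$ so Proposition~\ref{prop:Usmall} applies verbatim to $A$ as well.
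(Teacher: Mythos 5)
Your proof is essentially the paper's: the paper gives no detailed argument for this statement, merely asserting that it ``arises from \propref{prop:Usmall}, as in the classic case'' (citing Rotman, Chapter 6), and your write-up is exactly that classical derivation — relative $\cU$-small chains via the five lemma, using that the subdivision $\sd$ and the homotopy $T$ are built inside the domain simplex and hence preserve chains supported in $A$ and the $\cU$-small subcomplexes, followed by the generator-by-generator identification of the quotient complexes. One small repair: $X\menos Z$ need not be open (since $Z$ need not be closed), so to invoke \propref{prop:Usmall} verbatim you should work with the open cover $\{X\menos \ov{Z},\inte{A}\}$ (or observe that the subdivision argument only requires the interiors of the cover to fill $X$, as in the classical references), after which the comparison of generators — simplexes not contained in $A$ necessarily lie in $X\menos\ov{Z}\subset X\menos Z$ — goes through as you describe.
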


\section{Intersection homotopy groups}
 
A pointed perverse space is a triple $(X,\ov{p},x_{0})$ where $(X,\ov{p})$ is a perverse space and 
$x_{0}\in X\menos \Sigma_{X}$ is a regular point.
We also denote $x_{0}$ the simplicial subset of $\sing\, X$ generated by $x_{0}\colon \Delta^0\to X$.

\begin{definition}{(\cite{MR1404919})}\label{def:intersectionhomotopygp}
Let $(X,\ov{p},x_{0})$ be a pointed perverse space.
The \emph{$\ov{p}$-intersection homotopy  groups}
(or perverse homotopy groups)
are the homotopy groups of the simplicial set $\crG_{\ov{p}}X$,
$$\pi^{\ov{p}}_{\ell}(X,x_{0})=\pi_{\ell}(\crG_{\ov{p}}X,x_{0}).$$
\end{definition}

As $\crG_{\ov{p}}X$ is Kan, an element of $\pi^{\ov{p}}_{\ell}(X,x_{0})$ is given by a $\ov{p}$-full simplex, 
$\sigma\colon \Delta^\ell\to X$, with $\sigma(\partial\Delta^\ell)=\{x_{0}\}$.
Two such simplexes, $\sigma_{0}$ and $\sigma_{1}$, are equivalent if there exists a $\ov{p}$-full simplex
$\Phi\colon \Delta^{\ell+1}\to X$ verifying
$\partial_{i}\Phi=x_{0}$ if $i\geq 2$ and $\partial_{i}\Phi=\sigma_{i}$ for $i=0,\,1$.
We continue with the set $\pi_{0}^{\ov{p}}(X)$ of the connected components of $\crG_{\ov p} X$.

\begin{definition}\label{def:pconnexe}
A perverse space, $(X,\ov{p})$, is said \emph{$\ov{p}$-connected} if $\pi_{0}^{\ov{p}}(X)=0$.
\end{definition}

 If $(X,\ov{p})$ is a $\ov{p}$-connected perverse space, we do not need to specify the basepoint of the homotopy groups 
 and  sometimes we will write $\pi^{\ov p}_\ell (X)$ instead of $\pi^{\ov{p}}_{\ell}(X,x_{0})$.

\begin{remark}\label{exam:pi0}
 Since each regular point of $X$ is a $\ov p$-full simplex, we have the natural map $\pi_0(X \menos \Sigma_X) \to \pi_0^{\ov p} (X)$. 
 If the perversity $\ov p$ verifies $\ov p\leq \ov t$, then the  $\ov p$-full simplexes $\sigma \colon\Delta^\ell \to X$, for $\ell=0,1$,  
 do not meet the singular part of $X$. So, $\pi_0^{\ov p}(X) = \pi_0(X\menos \Sigma_X)$.  
 On the other hand, if $\ov p \geq \ov t + \ov 2$ then the $\ov p$-allowability condition is always fulfilled by the 0 and 1 simplexes.
 So $\pi_{0}^{\ov{p}}(X)=\pi_{0}(X)$.
 \end{remark}
 
If $(X,\ov{p})$ is a perverse CS set, the set $\pi_{0}^{\ov{p}}(X)$ is related with the family of connected components of $X\menos \Sigma_X$ in a more specific way.

\begin{proposition}\label{prop:petitchemin}
Let $(X,\ov{p})$  be a perverse CS set. For any point $x\in X$, there exists a path,
$\beta\colon [0,1]\to X$, with $\beta(0)=x$, $\beta(]0,1])\subset X\menos \Sigma$.
Thus the canonical inclusion induces a surjection $\pi_{0}(X\menos \Sigma)\to \pi_{0}^{\ov{p}}(X)$.
\end{proposition}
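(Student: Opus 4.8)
The plan is to use the local conical structure of a CS set to build the path near $x$ and then patch it with a path in the regular part. First I would argue by induction on the depth of $X$. If $x$ is already a regular point, the constant path works (or any short path in the regular manifold $X\menos\Sigma$). So suppose $x\in S$ for a singular stratum $S$, say $x\in X_i\menos X_{i-1}$ with $i\neq n$. Pick a conical chart $(V,\varphi)$ of $x$, so $\varphi\colon U\times\rc L\to V$ is a stratified homeomorphism with $\varphi(u,\tv)=u$ for $u\in U$ and $L$ a compact filtered space of dimension $n-i-1$. The point $x$ corresponds to $(x,\tv)$ under $\varphi$. Since the regular part of $L$ is dense in $L$ (this is noted in the excerpt right after \defref{def:csset}), there is a regular point $z\in L\menos\Sigma_L$. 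Now consider the path $t\mapsto \varphi(x,[z,t])$ in $V$ for $t\in[0,1[$: at $t=0$ it equals $\varphi(x,\tv)=x$, and for $t>0$ the point $[z,t]$ lies in the stratum $\{$regular stratum of $L$ containing $z\}\times\,]0,1[$ of $\rc L$, which $\varphi$ sends into a regular stratum of $V$ — indeed, because $\varphi$ is a stratified homeomorphism, $\varphi$ carries the top (dense, regular) stratum $U\times(L\menos\Sigma_L)\times\,]0,1[$ of $U\times\rc L$ onto $V\cap(X\menos\Sigma)$. Thus $\beta_0(t)=\varphi(x,[z,t/2])$, $t\in[0,1]$, is a path with $\beta_0(0)=x$ and $\beta_0(]0,1])\subset X\menos\Sigma$. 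This already proves the first assertion; no induction is even needed once one observes that $\varphi$ maps the open dense stratum of $U\times\rc L$ into $X\menos\Sigma$.

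For the second assertion, recall from \remref{exam:pi0}-type reasoning that $\pi_0^{\ov p}(X)$ is the set of connected components of $\crG_{\ov p}X$, and a $0$-simplex of $\crG_{\ov p}X$ is exactly a point of $X\menos\Sigma$ (any regular point is automatically $\ov p$-full, being disjoint from every singular stratum, whatever the perversity is). Two such points $x_0,x_1\in X\menos\Sigma$ lie in the same component of $\crG_{\ov p}X$ iff they are joined by a finite zig-zag of $\ov p$-full $1$-simplexes; in particular, the inclusion $X\menos\Sigma\hookrightarrow X$ induces a well-defined map $\pi_0(X\menos\Sigma)\to\pi_0^{\ov p}(X)$, and it is enough to prove surjectivity. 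Since $\crG_{\ov p}X$ is a Kan complex, $\pi_0^{\ov p}(X)=\pi_0(|\crG_{\ov p}X|)$, and by the Kan condition every class in $\pi_0^{\ov p}(X)$ is represented by a vertex, i.e. by some regular point $x_1\in X\menos\Sigma$. Thus the map $\pi_0(X\menos\Sigma)\to\pi_0^{\ov p}(X)$ hits every class represented by a regular point — but a priori there could be no such representative issue: every vertex of $\crG_{\ov p}X$ is a regular point, so every class is represented by a regular point, and surjectivity is immediate from the mere fact that vertices of $\crG_{\ov p}X$ are exactly the points of $X\menos\Sigma$.

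I realize the first assertion and the short-path construction is where all the content lies; the second assertion is essentially formal once one unwinds the definition of $\pi_0^{\ov p}$ as components of the Kan complex $\crG_{\ov p}X$ whose vertices are precisely the regular points of $X$. The main (and only real) obstacle is making sure the stratified homeomorphism $\varphi$ genuinely sends the dense open stratum of $U\times\rc L$ into $X\menos\Sigma$: this uses that $\varphi$ is a stratified \emph{homeomorphism} (so $\varphi^{-1}$ is also stratified, hence $\varphi$ sends regular strata to regular strata bijectively), together with $\dim L=n-i-1$ forcing the top stratum $U\times(L\menos\Sigma_L)\times\,]0,1[$ to have formal dimension $n$ in $X$. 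I would also double-check the edge case $i=n$ (then $x$ is already regular) and the convention $\rc\,\emptyset=\{\tv\}$, which cannot occur here because links are required non-empty in \defref{def:csset}. Everything else — the choice of $z\in L\menos\Sigma_L$, continuity of $t\mapsto\varphi(x,[z,t])$, the reparametrisation onto $[0,1]$ — is routine.
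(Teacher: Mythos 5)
Your construction of the path is correct and is exactly the paper's argument: pick a conical chart $\varphi$ around $x$, a regular point $z$ of the link $L$ (which exists since $L\menos\Sigma_L\neq\emptyset$), and take $t\mapsto\varphi(x,[z,t])$; your verification that $\varphi$ carries the open stratum $U\times(L\menos\Sigma_L)\times\,]0,1[$ into $X\menos\Sigma$ is the (implicit) point of the paper's proof, and you are right that no induction on depth is needed.

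However, your treatment of the second assertion contains a genuine error. You claim that the vertices of $\crG_{\ov{p}}X$ are \emph{exactly} the regular points of $X$, and deduce surjectivity formally from this. That is false for a general perversity, and the Proposition is stated for an arbitrary perversity $\ov{p}\colon\cS_X\to\ov{\Z}$: a point $x$ in a singular stratum $S$ is a $\ov{p}$-allowable $0$-simplex as soon as $0\leq 0-\codim S+\ov{p}(S)$, i.e. $\ov{p}(S)\geq\codim S$ (equivalently $D\ov{p}(S)\leq -2$); this is precisely the situation $\ov{p}\geq\ov{t}+\ov{2}$ recorded in \remref{exam:pi0}, which you cite, where $\pi_0^{\ov{p}}(X)=\pi_0(X)$. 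So a component of $\crG_{\ov{p}}X$ may a priori contain only singular vertices, and surjectivity of $\pi_0(X\menos\Sigma)\to\pi_0^{\ov{p}}(X)$ is not ``immediate from the vertices''. This is exactly where the path $\beta$ of the first assertion is needed (hence the ``Thus'' in the statement): if $x\in S$ is a $\ov{p}$-full vertex, i.e. $D\ov{p}(S)\leq -2$, then $\beta$, viewed as a singular $1$-simplex, satisfies $\beta^{-1}S=\{0\}$ of dimension $0\leq 1-2-D\ov{p}(S)$, has empty preimage of every other singular stratum, and has $\ov{p}$-allowable faces ($x$ and the regular point $\beta(1)$); so $\beta$ is a $\ov{p}$-full $1$-simplex of $\crG_{\ov{p}}X$ joining $x$ to a regular vertex. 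Every component of $\crG_{\ov{p}}X$ therefore contains a regular point, which gives the surjection. With this correction your argument is complete; without it, the second claim is unproved whenever $\ov{p}(S)\geq\codim S$ for some singular stratum.
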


\begin{proof}
If $x$ is regular, we choose the constant path. If not, we consider a conical chart, $\varphi\colon \R^k\times \rc L\to U$
with $\varphi(0,\tv)=x$. We join $x$  to a regular point with the path, $t \mapsto (0, [z,t])$. 
 Here, we have written $\tv = [z,0]$ where $z$ is a regular point of the link $L$, which exists  since $L\menos \Sigma_L\ne \emptyset$. 
\end{proof}
 
The first concrete computation of $\ov{p}$-intersection homotopy groups 
concerns the  basic example of the cone on a filtered space. 
The following statement is the Eckmann-Hilton dual of the intersection homology of cones
(\cite[Section 6]{GM1} or \cite[Proposition 5.2]{CST3}).  

\begin{proposition}{\cite[Example 2]{MR1404919}}\label{prop:coneandhomotopy}
Let $(X,x_{0})$ be a compact filtered space,  pointed by a regular point $x_{0}$,
 and $\rc X$ be the open cone of apex $\tv$, with the conic filtration and pointed by $y_{0}=[x_{0},1/2]$.
Let $\ov{p}$ be a   perversity on $\rc X$,  
we denote also by $\ov{p}$ the perversity induced on $X$.
Then, the $\ov{p}$-intersection homotopy groups of $\rc X$ are given by 
$$
\pi_{\ell}^{\ov{p}}(\rc X,y_{0})=\left\{
\begin{array}{lcl}
\pi_{\ell}^{\ov{p}}(X,x_{0})&\text{ if }& \ell\leq D\ov{p}(\tv),\\ 
0&\text{ if }& \ell> D\ov{p}(\tv). 
\end{array}
\right.$$
\end{proposition}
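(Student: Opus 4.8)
The plan is to deduce the statement from the homological cone computation in \propref{prop:cone} together with the detection principle of \lemref{lem:quillenfini} (and \lemref{lem:quillen}), by analysing the inclusion $\crG_{\ov p}X \hookrightarrow \crG_{\ov p}\rc X$. Write $k = D\ov p(\tv)$ and $\ell = k+1$. First I would fix the basepoint: since $X$ is pointed by a regular point $x_0$ and $\rc X$ is pointed by $y_0 = [x_0,1/2]$, the obvious path $t\mapsto [x_0,t]$ in $\rc X\menos\{\tv\} = X\times\,]0,1[$ connects the image of $x_0$ to $y_0$, so after composing with this path we may compare $\pi_*^{\ov p}(X,x_0)$ and $\pi_*^{\ov p}(\rc X, y_0)$ through the simplicial inclusion $j\colon \crG_{\ov p}X \to \crG_{\ov p}\rc X$ induced by $X = X\times\{1/2\}\subset \rc X$. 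Equivalently, one can use the stratified deformation retraction of $\rc X\menos\{\tv\}$ onto $X\times\{1/2\}$ and \corref{cor:gajerhomotopy} to replace $\crG_{\ov p}(\rc X\menos\{\tv\})$ by $\crG_{\ov p}X$ up to homotopy equivalence.

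Next I would handle the easy range $\ell \leq k$, i.e. $j \leq D\ov p(\tv)$. By \propref{prop:isolatedgajer} applied to the isolated singularity $\tv$ with $\ell = D\ov p(\tv)+1$, the $\ell$-skeleta of $\crG_{\ov p}\rc X$ and $\crG_{\ov p}(\rc X\menos\{\tv\}) \simeq \crG_{\ov p}X$ agree; since both simplicial sets are Kan (\propref{prop:gajerkan}), an isomorphism of $\ell$-skeleta forces isomorphisms on $\pi_j$ for $j < \ell$, that is for $j\leq k = D\ov p(\tv)$, which is the first line of the formula. For the vanishing range $\ell > k$ I would instead invoke \propref{prop:cone}: for every local coefficient system $\cE$ on $\crG_{\ov p}X$ one has $H_j(\crG_{\ov p}\rc X, \crG_{\ov p}X;\cE) = 0$ for $j\leq D\ov p(\tv)+1 = k+1$. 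In particular the inclusion $j$ induces isomorphisms $H_j(\crG_{\ov p}X; j^*\cE) \cong H_j(\crG_{\ov p}\rc X;\cE)$ for $j\leq k$ and a surjection for $j = k+1$, and it induces $\pi_0$- and $\pi_1$-isomorphisms by the skeletal argument above (valid as soon as $D\ov p(\tv)\geq 1$; the low cases $D\ov p(\tv)\in\{-1,0\}$, where $\crG_{\ov p}\rc X$ may be empty or discrete, I would treat by hand from \eqref{equa:encoreadmisv}). Then \lemref{lem:quillenfini} applied with $\ell = k$ yields that $j_*\colon \pi_j^{\ov p}(X)\to \pi_j^{\ov p}(\rc X)$ is an isomorphism for $j\leq k$ and a surjection for $j = k+1$.

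It remains to upgrade surjectivity to the statement $\pi_\ell^{\ov p}(\rc X) = 0$ for $\ell > k$, and this is the step I expect to be the main obstacle, since \lemref{lem:quillenfini} only gives information through degree $k+1$. The cleanest route is to show directly that $\crG_{\ov p}\rc X$ has trivial homotopy above degree $k$, using a coning/contraction argument: a $\ov p$-full simplex $\sigma\colon \Delta^j\to \rc X$ representing a class in $\pi_j^{\ov p}(\rc X, y_0)$, $j > k$, can be coned off to the apex, because the cone $\tc\sigma\colon \Delta^{j+1}\to \rc X$ (radial join with $\tv$) satisfies the $\ov p$-allowability bound exactly when $j+1 \geq D\ov p(\tv)+2$, i.e. $j\geq k$; one must check all faces of $\tc\sigma$, but a face either lies in $\sigma$ (already $\ov p$-full) or is a cone on a face of $\sigma$, for which the same dimension count applies. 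Composing with a contraction of $\rc X$ to $\tv$ and then a chosen path from $\tv$ back to $y_0$ produces a nullhomotopy, with the boundary-relation bookkeeping done as in the definition of equivalence of $\ov p$-full simplexes following \defref{def:intersectionhomotopygp}. Alternatively, one can run this coning argument once in the appropriate degree to kill $\pi_{k+1}$ and then, knowing $\pi_j^{\ov p}(\rc X)=0$ for $j\leq k+1$, feed \propref{prop:cone} into \lemref{lem:quillen} to conclude $j$ is a weak equivalence onto a space all of whose homotopy is concentrated in degrees $\leq k$ — but since that would require vanishing of the relative homology in all degrees, which \propref{prop:cone} does not give, the direct coning argument is the one I would actually carry out. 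I would present the coning construction in detail as the core of the proof, deriving the $\ov p$-fullness of $\tc\sigma$ and its faces from the inequality $\dim(\tc\sigma)^{-1}S \leq \dim\sigma^{-1}S + 1$ for the singular strata of $X\times\,]0,1[$ and from $(\tc\sigma)^{-1}\tv$ being either empty or the coning vertex together with $\sigma^{-1}\tv$.
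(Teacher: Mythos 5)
Your treatment of the range $\ell\leq D\ov{p}(\tv)$ is fine and is exactly the paper's argument (\propref{prop:isolatedgajer} plus \propref{prop:homotopyxR}), and you are right that the homological route through \propref{prop:cone} and \lemref{lem:quillenfini} can only reach degree $D\ov{p}(\tv)+1$ and so cannot give the vanishing statement. The genuine gap is in the coning argument you propose as the core of the proof. Writing $k=D\ov{p}(\tv)$, the ``radial join with $\tv$'' simplex $\tc\sigma\colon\Delta^{j+1}\to\rc X$ is in general \emph{not} $\ov{p}$-full: its cone point is a $0$-dimensional iterated face mapping to $\tv$, and a vertex at the apex is $\ov{p}$-allowable only when $0\leq 0-D\ov{p}(\tv)-2$, i.e.\ $D\ov{p}(\tv)\leq -2$ (so never for a GM-perversity). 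Likewise the $j$-dimensional faces of $\tc\sigma$ through the cone point (cones on the faces of $\sigma$, which are constant at $y_{0}$) have $0$-dimensional preimage of $\tv$ and need $0\leq j-k-2$, which fails exactly in the first degree $j=k+1$ where vanishing must be proved; so your claim that ``the same dimension count applies'' to these faces is false, the count loses one dimension while the preimage of $\tv$ does not. Finally, the bookkeeping you defer to — composing with a contraction to $\tv$ and ``a chosen path from $\tv$ back to $y_{0}$'' — cannot be carried out inside $\crG_{\ov{p}}(\rc X)$, because a $1$-simplex meeting $\tv$ is allowable only when $D\ov{p}(\tv)\leq -1$; basepoint transport along such a path is not available in the Gajer space.

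The fix (and the paper's actual construction) is to put the apex at an \emph{interior} point rather than at a vertex: prescribe the whole boundary of $\Delta^{\ell+1}$ first, $\partial_{0}\Phi=\sigma$ and $\partial_{i}\Phi=y_{0}$ for $i\geq 1$, send the barycentre $b$ of $\Delta^{\ell+1}$ to $\tv$, and extend radially using the cone structure of $\rc X$. Then every face of $\Phi$ is either $\sigma$ or a constant at the regular point $y_{0}$, so fullness of the faces is automatic and the basepoint condition needed for $[\sigma]=0$ holds by construction; the only thing left to check is allowability of $\Phi$ itself, where $\Phi^{-1}\tv$ is $\rc_{b}(\sigma^{-1}\tv)$ or $\{b\}$, and the hypothesis $\ell\geq D\ov{p}(\tv)+1$ is precisely what makes the case $\sigma^{-1}\tv=\emptyset$ (a $0$-dimensional preimage) allowable. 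As written, your proposal is missing this idea, and the construction it does describe would fail both at the cone vertex and in the borderline degree $j=k+1$.
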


\begin{proof} 
For $\ell \leq D\ov{p}(\tv)$, the statement is a  consequence of  Propositions ~\ref{prop:isolatedgajer} 
and \ref{prop:homotopyxR}.
Consider now a  $\ov{p}$-full  simplex
$\sigma=[\sigma_{0},\sigma_{1}]\colon (\Delta^\ell,\partial \Delta^\ell)\to (\rc X,y_{0})$
with $\ell\geq D\ov{p}(\tv) +1 $. 
For $\ell>0$, the result will be established if we define a $\ov{p}$-full simplex,
$\Phi\colon \Delta^{\ell+1}\to \rc X$ such that
$\partial_{i}\Phi=y_{0}$ if $i\geq 1$ and $\partial_{0}\Phi=\sigma$.
These requirements define $\Phi_{|\partial \Delta^{\ell+1}}$. 
To extend it to the whole simplex, we consider
the barycentre $b$ of $\Delta^{\ell+1}$ and  set $\Phi(b)=\tv$.
We extend $\Phi$ linearly
to the rest of $\Delta^{\ell+1}$. 
To check the allowability condition for $\Phi$,
we distinguish according to the two possible types of singular strata.
\begin{enumerate}[(i)]
\item \emph{The  stratum is the vertex $\tv$ of the cone.}

$\bullet$ If $ \sigma^{-1}\tv \neq\emptyset$, then we have
$ \dim \Phi^{-1}\tv=\dim \rc_{b}\sigma^{-1}  \tv \leq 1+  \dim\sigma^{-1}(\tv ) \leq 1+ \ell- D\ov{p}(\tv)-2$.

$\bullet$  If $\sigma^{-1}\tv =\emptyset$, then $\Phi^{-1}\tv =b $.  %
The restriction $\ell\geq D\ov{p}(\tv)+1$ implies
$ \dim \Phi^{-1}(\tv)=0\leq1+   \ell -D\ov{p}(\tv) - 2$.%
\item \emph{The stratum is the product $S\times ]0,1[$ where $S$ is a singular stratum of $X$.}
 
 $\bullet$ If $\sigma^{-1}(S\times ]0,1[)\neq\emptyset$, then we have\\
$ \Phi^{-1}(S\times ]0,1[)=
\{tx+(1-t)b\mid 
[\sigma_{0}(x),t\sigma_{1}(x)]
\in S\times ]0,1[\}  =\sigma^{-1}(S \times ]0,1[) \times ]0,1]$.
We deduce $\dim\Phi^{-1}(S\times ]0,1[)=\dim\sigma^{-1}(S\times ]0,1[)+1$
and the same argument than above applies here.

$\bullet$ If $\sigma^{-1}(S\times ]0,1[)=\emptyset$, then
 $\Phi^{-1}(S\times ]0,1[)=\emptyset$.
\end{enumerate}
For $\ell=0$ the situation is slightly different. We have two $\ov p$-full simplexes $y_1,y_2 \colon \Delta^0 \to \rc X$ and we need to find a  
$\ov{p}$-full simplex,
$\Phi\colon \Delta^{1}\to \rc X$ such that
$\partial_1\Phi=y_1$ and $\partial_{0}\Phi=y_2$.
We leave the adaptation to the reader.
\end{proof}

This determination implies the following properties. The first one is a consequence of the 
homotopy exact long sequence of a pair.
 
\begin{corollary}\label{cor:coneandhomology}
With the hypotheses of \propref{prop:coneandhomotopy},  the relative $\ov{p}$-intersection homotopy  groups of the pair $(\rc X,X)$ verifies
$
\pi_{\ell}^{\ov{p}}(\rc X,X)=0$,  for any $\ell \leq D\ov{p}(\tv)+1$.
\end{corollary}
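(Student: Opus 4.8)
The plan is to deduce \corref{cor:coneandhomology} directly from \propref{prop:coneandhomotopy} by feeding the computed absolute homotopy groups into the long exact homotopy sequence of the pair $(\crG_{\ov p}\rc X,\crG_{\ov p}X)$. Since $\crG_{\ov p}\rc X$ is a Kan simplicial set (by \propref{prop:gajerkan}) and $\crG_{\ov p}X$ is the Kan subcomplex of $\ov p$-full simplexes of $\rc X$ whose image lies in $X\times\{1/2\}\subset\rc X\menos\{\tv\}$, the pair $(\crG_{\ov p}\rc X,\crG_{\ov p}X)$ is a Kan pair and carries the usual long exact sequence
\[
\cdots\to\pi_{\ell}^{\ov p}(X,y_0)\xrightarrow{\;j_*\;}\pi_{\ell}^{\ov p}(\rc X,y_0)\to\pi_{\ell}^{\ov p}(\rc X,X,y_0)\to\pi_{\ell-1}^{\ov p}(X,y_0)\to\cdots
\]
with basepoint $y_0=[x_0,1/2]$, which lies in $\crG_{\ov p}X$ under the identification $X\cong X\times\{1/2\}$.

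First I would fix $\ell\leq D\ov p(\tv)+1$ and analyse three consecutive terms. By \propref{prop:coneandhomotopy}, for every $j\leq D\ov p(\tv)$ the inclusion induces an isomorphism $\pi_j^{\ov p}(X,x_0)\xrightarrow{\cong}\pi_j^{\ov p}(\rc X,y_0)$; indeed the proof of that proposition shows the isomorphism for $j\leq D\ov p(\tv)$ is realised by the map induced by the inclusion $X\times\{1/2\}\hookrightarrow\rc X$ (it comes from \propref{prop:isolatedgajer} together with \propref{prop:homotopyxR} applied to $X\times\,]0,1[\,$). For the top index $\ell\leq D\ov p(\tv)+1$ I would look at the piece
\[
\pi_{\ell}^{\ov p}(X,x_0)\xrightarrow{\;j_*\;}\pi_{\ell}^{\ov p}(\rc X,y_0)\to\pi_{\ell}^{\ov p}(\rc X,X)\to\pi_{\ell-1}^{\ov p}(X,x_0)\xrightarrow{\;j_*\;}\pi_{\ell-1}^{\ov p}(\rc X,y_0).
\]
Here $\ell-1\leq D\ov p(\tv)$, so the right-hand $j_*$ is an isomorphism, hence injective, and therefore the connecting map $\pi_{\ell}^{\ov p}(\rc X,X)\to\pi_{\ell-1}^{\ov p}(X,x_0)$ is the zero map; consequently $\pi_{\ell}^{\ov p}(\rc X,X)\to 0$ is injective, i.e. $\pi_\ell^{\ov p}(\rc X,X)$ is contained in (the image of) $\pi_\ell^{\ov p}(\rc X,y_0)/j_*\pi_\ell^{\ov p}(X,x_0)$. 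If $\ell\leq D\ov p(\tv)$ then $j_*\colon\pi_\ell^{\ov p}(X,x_0)\to\pi_\ell^{\ov p}(\rc X,y_0)$ is onto, so this quotient vanishes and $\pi_\ell^{\ov p}(\rc X,X)=0$. If $\ell=D\ov p(\tv)+1$ then $\pi_\ell^{\ov p}(\rc X,y_0)=0$ by the second clause of \propref{prop:coneandhomotopy}, so again $\pi_\ell^{\ov p}(\rc X,X)=0$. This settles every $\ell\leq D\ov p(\tv)+1$, with the low-degree cases $\ell=0,1$ handled by the same exact-sequence bookkeeping (interpreting $\pi_0,\pi_1$ as pointed sets resp. groups, using that $\crG_{\ov p}X\to\crG_{\ov p}\rc X$ is $\pi_0$-surjective).

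I do not expect a genuine obstacle, only two bookkeeping points to be careful about. The first is the identification of the basepoint: \propref{prop:coneandhomotopy} is stated with basepoint $y_0=[x_0,1/2]$, which under $X\cong X\times\{1/2\}$ is exactly the image of $x_0$, so the two long exact sequences are compatibly based and no basepoint-change argument is needed. The second, and the only place requiring a word of justification, is that the isomorphism of \propref{prop:coneandhomotopy} in the range $\ell\leq D\ov p(\tv)$ really is induced by the inclusion $j\colon\crG_{\ov p}X\hookrightarrow\crG_{\ov p}\rc X$ rather than by some abstract zig-zag; this is visible from its proof (skeleton isomorphism of \propref{prop:isolatedgajer} composed with the projection isomorphism of \propref{prop:homotopyxR}, all natural with respect to inclusions of open sets), so I would simply remark on this and then invoke exactness. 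Everything else is the standard five-term diagram chase in the long exact homotopy sequence of a Kan pair.
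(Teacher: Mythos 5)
Your proposal is correct and is exactly the paper's argument: the paper deduces the corollary in one line from the long exact homotopy sequence of the pair $(\crG_{\ov{p}}\rc X,\crG_{\ov{p}}X)$ together with the computation of \propref{prop:coneandhomotopy}, which is precisely your diagram chase. Your extra remarks (that the isomorphism in the range $\ell\leq D\ov{p}(\tv)$ is induced by the inclusion, and the basepoint identification $y_{0}=[x_{0},1/2]$) are sound and just make explicit what the paper leaves implicit.
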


\begin{corollary}\label{cor:gajerpostnikov}
With the hypotheses of \propref{prop:coneandhomotopy} for a topological space $X$ with the trivial filtration,
the simplicial set $\crG_{\ov{p}}(\rc X)$ is a 
$D\ov{p}(\tv)$-Postnikov stage  of the simplicial set $\sing \,X$. 
\end{corollary}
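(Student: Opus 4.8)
The plan is to construct an honest simplicial map $j\colon \sing X\to \crG_{\ov p}(\rc X)$ and to verify that it exhibits the target as the $k$-th Postnikov stage of $\sing X$, where $k=D\ov p(\tv)$; concretely this amounts to two things: $j$ induces isomorphisms on $\pi_\ell$ for $\ell\le k$, and $\pi_\ell(\crG_{\ov p}(\rc X))=0$ for $\ell>k$. Both will come almost formally from \propref{prop:isolatedgajer} and \propref{prop:coneandhomotopy}.

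First I would unwind the hypotheses. Since $X$ is a (trivially filtered) topological space, $\crG_{\ov p}X=\sing X$, the only singular stratum of $\rc X$ is the apex $\tv$, and $\rc X\menos\{\tv\}=X\times ]0,1[$ carries no singular stratum, so $\crG_{\ov p}(X\times ]0,1[)=\sing(X\times ]0,1[)$. By \propref{prop:homotopyxR} (or simply because $]0,1[$ is contractible and $\sing$ takes homotopy equivalences of spaces to homotopy equivalences of Kan complexes), the section $x\mapsto(x,1/2)$ of the projection $X\times ]0,1[\to X$ induces a homotopy equivalence $\sing X\xrightarrow{\;\simeq\;}\sing(X\times ]0,1[)$. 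Composing it with the inclusion of simplicial sets $\crG_{\ov p}(X\times ]0,1[)\hookrightarrow\crG_{\ov p}(\rc X)$ — which is legitimate because a simplex landing in $\rc X\menos\{\tv\}$ automatically meets $\tv$ in the empty set, hence stays $\ov p$-full when read inside $\rc X$ — produces the desired map $j$.

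Then I would invoke the two propositions that carry the content. \propref{prop:coneandhomotopy} gives directly $\pi_\ell(\crG_{\ov p}(\rc X))=\pi_\ell^{\ov p}(\rc X)=0$ for $\ell>k$. For the low degrees I would apply \propref{prop:isolatedgajer} to the isolated singularity $\tv$ with the value $\ell=k+1$: it shows that every $\ov p$-full simplex of $\crG_{\ov p}(\rc X)$ of dimension $\le k+1$ avoids $\tv$, hence that the inclusion $\crG_{\ov p}(X\times ]0,1[)\hookrightarrow\crG_{\ov p}(\rc X)$ restricts to an isomorphism on $(k+1)$-skeleta. In a Kan complex $\pi_\ell$ is computed from simplices of dimension $\le\ell+1$ only (one $\ell$-simplex per element, one $(\ell+1)$-simplex per relation), so an isomorphism on $(k+1)$-skeleta forces an isomorphism on $\pi_\ell$ for every $\ell\le k$; combined with the homotopy equivalence $\sing X\simeq\sing(X\times ]0,1[)$ this makes $j$ an isomorphism on $\pi_\ell$ for $\ell\le k$. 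Together with the vanishing above, this is exactly the statement that $(\crG_{\ov p}(\rc X),j)$ is a $k$-Postnikov stage of $\sing X$.

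The argument is short and is essentially a repackaging of \propref{prop:isolatedgajer} and \propref{prop:coneandhomotopy}, so I do not anticipate a genuine obstacle. The only points calling for a little care are the skeleton bookkeeping — spelling out precisely that an isomorphism on $(k+1)$-skeleta of Kan complexes yields isomorphisms on $\pi_\ell$ for $\ell\le k$ — and the (standard, but worth stating explicitly) fact that applying $\sing$ to the homotopy equivalence $X\simeq X\times ]0,1[$ gives a homotopy equivalence of simplicial sets, which is what lets the zig-zag collapse into a single genuine map $j$.
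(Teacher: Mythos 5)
Your argument is correct and is essentially the paper's own route: the corollary is stated there as an immediate consequence of \propref{prop:coneandhomotopy}, whose proof in low degrees rests on exactly the ingredients you use (\propref{prop:isolatedgajer} together with \propref{prop:homotopyxR}), while the vanishing above $D\ov{p}(\tv)$ is the coning argument. Your explicit construction of the map $j$ and the skeleton bookkeeping simply spell out what the paper leaves implicit.
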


\begin{corollary}\label{cor:gajerconemodel}
Let $\rc X$ be the cone on a simply connected, compact space $X$, filtered by its apex
 $\{\tv\}\subset \rc X$, and let $\ov{p}$ be a perversity given by $\ov{p}(\tv)$.
If $(\Lambda Z,d)$ is the  Sullivan minimal model  of $X$, 
then the cdga $(\Lambda Z^{\leq D\ov{p}(\tv)},d)$ is the minimal model of $\crG_{\ov{p}}(\rc X)$.
\end{corollary}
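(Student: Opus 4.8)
The plan is to combine \corref{cor:gajerpostnikov} with the standard relation between Postnikov stages and minimal models in rational homotopy theory. By \corref{cor:gajerpostnikov}, since $X$ is a topological space (filtered trivially) and $\crG_{\ov{p}}(\rc X)$ is its $D\ov{p}(\tv)$-Postnikov stage $P_{D\ov{p}(\tv)}\sing X$, the map $\sing X\to \crG_{\ov{p}}(\rc X)$ kills $\pi_{j}$ for $j>D\ov{p}(\tv)$ and is an isomorphism on $\pi_{j}$ for $j\leq D\ov{p}(\tv)$. Writing $k=D\ov{p}(\tv)$ and using that $X$ is simply connected and compact (so has finite type), I would first record that $\crG_{\ov{p}}(\rc X)$ is a simply connected space of finite type, hence has a Sullivan minimal model.

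First I would recall that if $\varphi\colon (\Lambda Z,d)\to A_{PL}(X)$ is the minimal model of $X$, then for a simply connected space the truncation behaviour of minimal models under Postnikov stages is classical: the minimal model of $P_{k}\sing X$ is obtained from $(\Lambda Z,d)$ by discarding the generators in degrees $>k$, i.e. it is $(\Lambda Z^{\leq k},d)$. The key point making this work is that a Sullivan minimal model can be built degree by degree, $\Lambda Z^{\leq m}$ corresponding exactly to the information of $\pi_{*}$ (dually, $Z^{m}\cong \Hom(\pi_{m},\Q)$ in the simply connected finite type case) up to degree $m$, together with the $k$-invariants, which are encoded by $d$ on those generators; since all $k$-invariants of $P_{k}\sing X$ agree with those of $\sing X$ in the relevant range, the differential on $\Lambda Z^{\leq k}$ is unchanged. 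One must check that $(\Lambda Z^{\leq k},d)$ is a well-defined sub-cdga of $(\Lambda Z,d)$, which holds because $d(Z^{j})\subset \Lambda Z^{\leq j-1}\subset \Lambda Z^{\leq k}$ for $j\leq k$ by minimality (the differential lowers degree by one and is decomposable), so it is closed under $d$.

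Concretely the steps are: (1) invoke \corref{cor:gajerpostnikov} to identify $\crG_{\ov{p}}(\rc X)$ with $P_{k}\sing X$, $k=D\ov{p}(\tv)$; (2) note $X$ simply connected of finite type forces $\crG_{\ov{p}}(\rc X)$ simply connected of finite type, so minimal models exist and are unique; (3) verify $(\Lambda Z^{\leq k},d)$ is a sub-cdga of $(\Lambda Z,d)$ and is itself minimal (it is generated in degrees $\geq 2$, with decomposable differential, inherited from $(\Lambda Z,d)$); (4) show the inclusion $(\Lambda Z^{\leq k},d)\hookrightarrow (\Lambda Z,d)$ followed by $\varphi$, composed with $A_{PL}$ of the Postnikov projection, is a quasi-isomorphism onto the model of $P_{k}\sing X$ — equivalently, compute that $H^{*}(\Lambda Z^{\leq k},d)$ matches $H^{*}(P_{k}\sing X;\Q)$, which one does by comparing the two minimal Sullivan algebras through their common sub-objects $\Lambda Z^{\leq j}$ and the standard fact that adding a generator in degree $j>k$ does not change cohomology in degrees $\leq k$ but does change it above (so truncating is forced). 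By uniqueness of minimal models, $(\Lambda Z^{\leq k},d)$ is \emph{the} minimal model of $\crG_{\ov{p}}(\rc X)$.

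The main obstacle I anticipate is step (4): making precise the claim that truncating the minimal model at degree $k$ corresponds exactly to the $k$-th Postnikov stage, rather than something larger. One must be careful that $\Lambda Z^{\leq k}$ really computes the rational homotopy of $P_{k}\sing X$ and not more — the subtlety is that the minimal model of a space has cohomology equal to that of the space in \emph{all} degrees, and $H^{*}(\Lambda Z^{\leq k},d)$ genuinely differs from $H^{*}(X;\Q)$ above degree $k$; one needs the classical lemma (e.g. from Félix–Halperin–Thomas, or \cite[Section 4.H]{MR1867354} in the Postnikov/tower language) that $(\Lambda Z^{\leq k},d)$ is a model for the $k$-th Postnikov section, which rests on the construction of the minimal model as an inductive Postnikov-type tower $\Lambda Z^{\leq 2}\subset \Lambda Z^{\leq 3}\subset\cdots$ dual to the Postnikov tower of the space. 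Once that dictionary is in place, together with \corref{cor:gajerpostnikov}, the identification is immediate.
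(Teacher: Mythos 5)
Your argument is correct and is essentially the paper's proof: the paper likewise deduces the statement from \corref{cor:gajerpostnikov} together with the classical correspondence between Sullivan minimal models and Postnikov towers (citing Sullivan), which is exactly the dictionary you spell out in steps (1)--(4). Your additional verifications (that $(\Lambda Z^{\leq k},d)$ is a minimal sub-cdga and models the $k$-th Postnikov section) are just an expansion of that cited classical fact, not a different route.
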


\begin{proof}
This  property comes from the relation between Sullivan minimal models and Postnikov towers, see \cite{MR0646078}.
\end{proof}

 In opposition with the PL situation, in a  CS set $X$,
one can encounter  links of the same point that are not homeomorphic (\cite[Example 2.3.6]{FriedmanBook}). 
In the case of intersection homology groups, this phenomenon does not matter since all these
links have the same intersection homology groups. 
We prove below that the links of points in the same stratum have isomorphic $\ov{p}$-intersection homotopy groups, 
for any perversity $\ov{p}$ on the total space.
We analyze this property again in \remref{rem:overlap}.
(Recall  that a perversity defined on $X$ induces a perversity on the conical charts thus on the links, 
see  \remref{rem:perverselink}.)

\begin{proposition}\label{prop:LinkUnique}
Let $(X,\ov p)$ be a  normal perverse CS set, 
we also denote by $\ov{p}$ the perversities induced on each link. 
Let $S$ be a singular stratum,  $x_1,x_2$ be two points of $S$ and $L_1,L_2$ be respective links of $x_{1}$ and $x_{2}$.
Then, for any $\ell\geq 1$, there is a group isomorphism,
 $$
\pi_\ell^{\ov  p} (L_1,x_{1}) \cong \pi_\ell^{\ov  p} (L_2,x_{2}).
 $$
\end{proposition}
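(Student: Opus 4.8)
The key to this statement is to show that, although $L_1$ and $L_2$ need not be stratified homeomorphic, the two conical charts around $x_1$ and $x_2$ overlap enough on the regular part to force an isomorphism on perverse homotopy. The plan is to reduce to the case where the two points lie in a common conical chart, and then to exploit the cone formula of \propref{prop:coneandhomotopy} together with \corref{cor:gajerhomotopy}.

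First I would observe that it suffices to treat the case $x_1=x_2=x$: since $S$ is connected (it is a manifold, hence path-connected) and the links along a path in $S$ are all identified inside a single conical chart $\varphi\colon U\times\rc L\to V$ with $U$ connected, a standard chaining argument reduces the general case to comparing two links of the \emph{same} point $x$. So let $L_1$, $L_2$ be two links of $x$, with conical charts $\varphi_i\colon U_i\times \rc L_i\to V_i$, $\varphi_i(u,\tv)=u$. Shrinking, we may assume $V_2\subset V_1$ and that $V_2$ is of the form $\varphi_1(U_2\times \rc L_1)$ for a small Euclidean ball $U_2$ around $x$ in $S$; restricting the first chart we get a stratified homeomorphism $U_2\times \rc L_1\to V_2$, while the second chart gives $U_2\times \rc L_2\to V_2$ (after shrinking $U_2$). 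Composing, we obtain a \emph{stratified homeomorphism}
$$
\Psi\colon U_2\times \rc L_1 \xrightarrow{\ \cong_s\ } U_2\times \rc L_2,
$$
necessarily preserving the apex section $U_2\times\{\tv\}=U_2$, because both charts send it to $U_2\subset S$ and strata of codimension $\codim S$ meet $V_2$ only there. Passing to induced perversities (see \remref{rem:perverselink}), $\Psi$ carries the induced perversity $\ov p$ on $U_2\times\rc L_1$ to that on $U_2\times\rc L_2$, so \corref{cor:gajerhomotopy} yields a homotopy equivalence $\crG_{\ov p}(U_2\times \rc L_1)\simeq \crG_{\ov p}(U_2\times \rc L_2)$.

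Next I would compute both sides. By \propref{prop:homotopyxR}, $\crG_{\ov p}(U_i\times\rc L_i)\cong \sing\,U_i\times\crG_{\ov p}(\rc L_i)$; since $U_i$ is a Euclidean ball, $\sing\,U_i$ is contractible, and by \propref{prop:coneandhomotopy} we get, for $\ell\geq 1$,
$$
\pi_\ell^{\ov p}(U_i\times\rc L_i)\cong \pi_\ell^{\ov p}(\rc L_i)=
\begin{cases}\pi_\ell^{\ov p}(L_i)&\ell\leq D\ov p(\tv),\\ 0&\ell> D\ov p(\tv).\end{cases}
$$
Here $D\ov p(\tv)=D\ov p(S)$ by the last sentence of \remref{rem:perverselink}, and since $\ov p$ is a GM (hence $\ov p\leq\ov t$, so $D\ov p\geq 0$) perversity the truncation degree is nonnegative. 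Combining with the homotopy equivalence $\Psi$ just produced gives $\pi_\ell^{\ov p}(L_1)\cong\pi_\ell^{\ov p}(L_2)$ in the range $\ell\leq D\ov p(\tv)$, and $\pi_\ell^{\ov p}(L_1)=0=\pi_\ell^{\ov p}(L_2)$ for $\ell> D\ov p(\tv)$ — so the isomorphism holds for all $\ell\geq 1$ as claimed. (For the basepoint issue one uses that $U_2$ is connected together with \propref{prop:petitchemin} to pick compatible regular basepoints, or invokes \propref{prop:LinkUnique}'s statement being about abstract group isomorphism.)

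\textbf{Main obstacle.} The delicate point is the construction of the stratified homeomorphism $\Psi$ preserving the apex section: one must genuinely use that in a CS set two conical charts about the same point, after restriction to a common small neighborhood, can be compared \emph{as stratified spaces} — this is where local conicality is essential and where the ``homeomorphisms and stratified maps'' versus ``stratified homeomorphisms'' distinction emphasized after \defref{def:startifiedmap} must be handled carefully. Concretely, one needs that $V_2\menos S$, with its two product structures $U_2\times\,]0,1[\times L_1$ and $U_2\times\,]0,1[\times L_2$, admits a stratified homeomorphism fixing the $S$-direction; this follows from the uniqueness-of-links argument in the spirit of \cite[Remark 2.6.2]{FriedmanBook} (or \remref{rem:CSmanifold}'s invertible-cobordism technique), but it is the step that requires the most care, since everything else is a formal consequence of \propref{prop:homotopyxR}, \propref{prop:coneandhomotopy}, and \corref{cor:gajerhomotopy}.
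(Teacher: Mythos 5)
Your argument has two genuine gaps, and the second one is structural. First, the stratified homeomorphism $\Psi\colon U_2\times\rc L_1\cong_{s} U_2\times\rc L_2$ does not follow from shrinking. Restricting $\varphi_1$ to a sub-cone $U_2\times\rc_t L_1$ does stay in cone form for the \emph{first} chart, but there is no reason the resulting open set is of the form $\varphi_2(U_2'\times\rc L_2)$ in the \emph{second} chart: a general open neighborhood of the apex section inside $U\times\rc L_2$ is not a ball times a sub-cone. Comparing two cone structures on a common neighborhood is exactly what fails in the topological category (this is the source of the non-uniqueness of links recalled via \cite[Example 2.3.6]{FriedmanBook}), and neither \cite[Remark 2.6.2]{FriedmanBook} (which only concerns connectedness of links) nor the invertible-cobordism trick of \remref{rem:CSmanifold} (specific to the manifold case, and producing only a homeomorphism, not a stratified one) supplies $\Psi$. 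The paper sidesteps this entirely: following \cite[Lemma 5.3.13]{FriedmanBook} one constructs interlocking open subsets $V_1'\xrightarrow{f}V_2\xrightarrow{g}V_1\xrightarrow{h}V_2'$, with $V_i$ stratified homotopy equivalent to $L_i$, such that only the \emph{composites} $gf$ and $hg$ are stratified homotopy equivalences; then \corref{cor:gajerhomotopy} plus the two-out-of-six argument shows that $g$ induces isomorphisms $\pi_\ell^{\ov p}(L_1)\cong\pi_\ell^{\ov p}(L_2)$.

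Second, even granting $\Psi$, your computation cannot give the statement for all $\ell\geq 1$. By \propref{prop:coneandhomotopy}, $\pi_\ell^{\ov p}(\rc L_i)=0$ for $\ell>D\ov p(\tv)$ \emph{whatever} $\pi_\ell^{\ov p}(L_i)$ is: the cone formula truncates the cone, not the link. So an isomorphism $\pi_\ell^{\ov p}(U_2\times\rc L_1)\cong\pi_\ell^{\ov p}(U_2\times\rc L_2)$ only yields $\pi_\ell^{\ov p}(L_1)\cong\pi_\ell^{\ov p}(L_2)$ in degrees $\ell\leq D\ov p(\tv)$, and your claim that $\pi_\ell^{\ov p}(L_1)=\pi_\ell^{\ov p}(L_2)=0$ above that degree is unfounded. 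Any neighborhood of $x$ containing the apex section has truncated perverse homotopy, so no comparison of such neighborhoods can detect the high-degree perverse homotopy groups of the links; this is precisely why the paper compares deleted neighborhoods, stratified homotopy equivalent to the links themselves, rather than the conical charts. (A minor further point: the proposition concerns an arbitrary perversity on a normal CS set, so you should not invoke the GM hypothesis.)
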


\begin{proof}
By connectedness of $S$, we can suppose that $x_1=x_2=x$.
In the proof of the analogous result for the intersection homology  (see \cite[Lemma 5.3.13]{FriedmanBook}),
one constructs  open subsets, linked by canonical inclusions as follows,
$$V'_{1}\xrightarrow{f}V_{2}\xrightarrow{g} V_{1}\xrightarrow{h}V'_{2},
$$
with $V_{2}$ stratified homotopy equivalent to $L_{2}$, $V_{1}$ stratified homotopy equivalent to $L_{1}$
and such that $gf$ and $hg$ are stratified homotopy equivalences.
 As a stratified homotopy equivalence induces an isomorphism between intersection homotopy groups 
(\corref{cor:gajerhomotopy}),
the compositions $hg$ and $gf$ induce isomorphisms between intersection homotopy groups.
As $L_{1}$ and $L_{2}$ are connected, the map  $g$ induces an isomorphism
$\pi_\ell^{\ov  p} (L_1) \cong \pi_\ell^{\ov  p} (L_2)$.
\end{proof}

By taking the perversity $\ov{p}=\ov{\infty}$, the previous result  implies 
$\pi_{\ell}(L_{1}\menos\Sigma_{L_{1}})\cong \pi_{\ell}(L_{2}\menos\Sigma_{L_{2}})$.
With $\ov{p}=-\ov{\infty}$,
we also get $\pi_{\ell}(L_{1})\cong \pi_{\ell}(L_{2})$.

\section{Intersection fundamental group}\label{sec:piuno}

Let $(X,\ov{p})$ be a perverse  space, with $\ov{p}\leq \ov{t}$. 
Recall that any $\ov{p}$-allowable simplex, $\sigma\colon \Delta^\ell\to X$, verifies
$\dim \sigma^{-1}S\leq \ell -\codim S +\ov{p}(S)\leq \ell -2$,
for any singular stratum $S$.
Thus,  the vertices and the edges of a  $\ov{p}$-full simplex stay in the regular part.
This implies 
\begin{equation}\label{equa:pi1regular}
\text{
\emph{the surjectivity of} 
$\pi_{1}(X\menos\Sigma_{X},x_{0})\to \pi_{1}^{\ov{p}}(X,x_{0})$ for $x_{0}\in X\menos\Sigma_{X}$. 
}\end{equation}
We begin with a Van Kampen theorem if $\ov{p}\leq \ov{t}$.
The proof consists in the observation that the classic demonstration  can be done in accordance with the $\ov{p}$-allowability conditions.
For that, we use \cite[Section 1.2]{MR1867354} as guideline.

\smallskip
Let $(X,\ov{p})$ be a perverse space with $\ov{p}\leq \ov{t}$, $\{U_{0},U_{1}\}$  be an open cover of $X$. 
The open subsets $U_{0}$, $U_{1}$, $U_{0}\cap U_{1}$ are supposed to be path-connected and we endow them with the induced filtration and the induced perversity.
We denote by
$j_{k}\colon \pi_{1}^{\ov{p}}(U_{0}\cap U_{1},x_{0})\to \pi_{1}^{\ov{p}}(U_{k},x_{0})$ 
and 
$\iota_{k}\colon \pi_{1}^{\ov{p}}(U_{k},x_{0})\to  \pi_{1}^{\ov{p}}(X,x_{0})$
the homomorphisms induced by the canonical inclusions for $k=0,\,1$.
The homomorphisms $\iota_{k}$ extend to a homomorphism defined on the free product of groups,
$$\Phi\colon  \pi_{1}^{\ov{p}}(U_{0},x_{0})\ast \pi_{1}^{\ov{p}}(U_{1},x_{0})\to  \pi_{1}^{\ov{p}}(X,x_{0}).$$
The Van Kampen theorem consists of a determination of the kernel and the image of  $\Phi$.

\begin{theoremb}\label{thm:VK}
The map $\Phi$ is surjective and its kernel is the normal subgroup generated by all elements of the form
$j_{0}(\omega)\ast j_{1}(\omega)^{-1}$.
\end{theoremb}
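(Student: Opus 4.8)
The plan is to imitate the classical proof of the Van Kampen theorem as presented in \cite[Section~1.2]{MR1867354}, checking at each step that the loops, homotopies, and subdivisions that occur can be chosen to satisfy the $\ov{p}$-allowability conditions. The crucial structural fact making this possible is that, because $\ov{p}\leq\ov{t}$, every $\ov{p}$-allowable simplex $\sigma\colon\Delta^\ell\to X$ has all its $0$- and $1$-dimensional faces in the regular part $X\menos\Sigma_X$ (as recalled just before the statement). Thus a $\ov{p}$-full $1$-simplex is simply a path in $X\menos\Sigma_X$; a based loop representing a class in $\pi_1^{\ov p}$ is a loop in $X\menos\Sigma_X$; and a $\ov{p}$-full $2$-simplex $\Phi\colon\Delta^2\to X$ is a map whose restriction to $\partial\Delta^2$ lands in $X\menos\Sigma_X$ but whose interior may meet $\Sigma_X$, subject to $\dim\Phi^{-1}S\le -D\ov p(S)$ for each singular stratum $S$ (so for instance $\Phi^{-1}S$ is at most a finite set when $D\ov p(S)=0$). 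A homotopy of based loops in $\crG_{\ov p}X$ is represented, after triangulating the square $[0,1]^2$, by a $\ov{p}$-full simplicial map on that triangulation.

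First I would prove surjectivity of $\Phi$. Given a based $\ov p$-full loop $\gamma$ in $X$ (hence a loop in $X\menos\Sigma_X$), pull back the open cover $\{U_0,U_1\}$ along $\gamma\colon[0,1]\to X$; since $[0,1]$ is compact we get a partition $0=s_0<s_1<\dots<s_m=1$ with each $\gamma([s_{i-1},s_i])$ contained in some $U_{k(i)}$. Fixing a basepoint in each of $U_0,U_1,U_0\cap U_1$ (all path-connected, all with nonempty regular part by \propref{prop:petitchemin} / \remref{rem:conexenormal} type arguments, though for a general perverse space we just use path-connectedness of the regular parts) and choosing connecting paths $g_i$ inside the regular part of the appropriate piece, we write $\gamma$ up to $\ov p$-full homotopy as a concatenation of loops each supported in a single $U_k$. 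This exhibits $[\gamma]$ in the image of $\Phi$. The only point to check beyond the classical argument is that the connecting paths and the small ``correction'' homotopies stay in the regular part; this is automatic because all paths and path-homotopies involved are built from maps into the relevant open set whose $1$-skeleton must already avoid $\Sigma$.

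Next I would identify the kernel. One inclusion is immediate: $\iota_0 j_0(\omega)=\iota_1 j_1(\omega)$ in $\pi_1^{\ov p}(X)$ for every $\omega\in\pi_1^{\ov p}(U_0\cap U_1)$, so $j_0(\omega)\ast j_1(\omega)^{-1}$ lies in $\ker\Phi$, hence so does the normal subgroup $N$ it generates. For the reverse inclusion, suppose a word $w=[\gamma_1]\cdots[\gamma_r]$, with each $\gamma_a$ a loop in some $U_{k(a)}$, maps to $1$ in $\pi_1^{\ov p}(X)$. Then there is a $\ov p$-full homotopy $H\colon[0,1]^2\to X$ from the concatenation $\gamma_1\cdots\gamma_r$ to the constant loop. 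Triangulate $[0,1]^2$ so finely that $H$ carries each closed triangle into some $U_k$ — this is possible by a Lebesgue-number argument applied to $H^{-1}(U_0),H^{-1}(U_1)$, and after a further pseudo-barycentric subdivision (in the spirit of \propref{prop:Usmall}, though here we only need the classical planar subdivision) one may also assume $H$ restricted to this triangulation is $\ov p$-full, since refining a $\ov{p}$-full simplicial map keeps it $\ov p$-full. Then the standard combinatorial argument — inserting the vertices of the triangulation, labelling each edge by the $U_k$'s containing its star, and reading off the relations from the $2$-cells — shows that $w$ can be rewritten to the empty word using only insertions/deletions of subwords of the form $j_0(\omega)j_1(\omega)^{-1}$ and the relations holding inside $\pi_1^{\ov p}(U_0)$ and $\pi_1^{\ov p}(U_1)$. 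Hence $w\in N$.

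The main obstacle I expect is the bookkeeping in the last step: one must verify that when $H$ is subdivided and each triangle is pushed into a single $U_k$, the resulting edge-paths and the ``across-a-triangle'' homotopies really are $\ov p$-full, not merely continuous. The subtlety is that an edge of the triangulation is a $1$-cell and so its image automatically avoids $\Sigma_X$, but the $2$-cells of $H$ may hit $\Sigma_X$, and one needs the $\ov p$-allowability inequality $\dim(H^{-1}S\cap F)\le\dim F-D\ov p(S)-2$ to survive subdivision and the small perturbations used to straighten $H$ on each triangle. Subdivision preserves $\ov p$-fullness (an iterated face of a subdivided simplex is an iterated face of the original, up to the affine subdivision maps, exactly as in \propref{prop:gajerkan} and \propref{prop:Usmall}), and the straightening can be taken to be the radial/linear extensions already used in the proofs of \propref{prop:gajerkan} and \propref{prop:coneandhomotopy}, which only increase preimage dimensions in a controlled way. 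Once this compatibility is in hand, the remainder is the purely group-theoretic argument of the classical Van Kampen theorem, applied verbatim.
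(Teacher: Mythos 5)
Your overall strategy is the same as the paper's (run Hatcher's proof and check $\ov p$-allowability at each step), but the one step that actually needs a new idea is handled with a claim that is false in general: ``refining a $\ov p$-full simplicial map keeps it $\ov p$-full.'' It does not. When you subdivide the square, the subdivision creates \emph{new} edges and vertices in the interior of the original $2$-cells; these are not iterated faces of the original simplexes, and nothing prevents such a new edge from passing through one of the points of $H^{-1}(\Sigma_X)$. Since $\ov p\leq\ov t$, a $1$-simplex is $\ov p$-allowable only if its image misses $\Sigma_X$ entirely, so such an edge destroys fullness. Your remark that ``an edge of the triangulation is a $1$-cell and so its image automatically avoids $\Sigma_X$'' is circular: avoiding $\Sigma_X$ is exactly the property you must arrange, not one you get for free. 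Appealing to the pseudo-barycentric subdivision of \propref{prop:Usmall} does not close the gap either: that construction only guarantees allowability of the simplexes through the pseudo-barycentre, and (as the paper's own proof of \thmref{InterHomolGajer01} records) in general at most one $1$-face of each small simplex can still fail to be allowable, so it does not produce a $\ov p$-full refinement of $H$.

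The missing ingredient is a positional, not a formal, argument, and it is exactly what the paper uses. Because each $2$-cell of the $\ov p$-full homotopy $F$ satisfies $\dim F^{-1}S\leq -D\ov p(S)\leq 0$ for every singular stratum $S$, the set $F^{-1}(\Sigma_X)$ is a finite set of points lying in the interior of the square (the boundary is made of $\ov p$-full $1$-simplexes, hence regular). One then chooses the Hatcher grid subdividing $[0,1]^2$ (fine enough, by the Lebesgue number, that each small square maps into $U_0$ or $U_1$) so that none of these finitely many points lies on the boundary of a small square; this is possible by translating the grid lines slightly. With that choice every small square is again a $\ov p$-allowable homotopy inside $U_0$ or $U_1$ --- its boundary avoids $\Sigma_X$ and its interior meets each stratum $S$ in a set of dimension $\leq -D\ov p(S)$ --- and the remainder of Hatcher's combinatorial argument applies verbatim. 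So your proof is repairable, but as written the kernel computation rests on an incorrect preservation claim rather than on this genericity-of-the-subdivision observation.
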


\begin{proof}
Let $\alpha$ be a $\ov{p}$-allowable loop in $X$. We already noted that the inclusion map induces a
surjective homomorphism
$\pi_{1}(X\menos\Sigma_{X},x_{0})\to \pi_{1}^{\ov{p}}(X,x_{0})$.
Thus, the $\ov{p}$-homotopy class of $\alpha$ is the image of the homotopy class of a loop 
$\beta$ in  $X\menos\Sigma_{X}$.
From the compactness of $[0,1]$, we obtain a decomposition 
$[\alpha]=\Phi([\gamma_{0}]\ast\dots\ast [\gamma_{\ell}])$
with the support of each $\gamma_{j}$ in one of the two subsets $U_{0}$, $U_{1}$. 
By using the path-connectedness of $U_{0}$, $U_{1}$, $U_{0}\cap U_{1}$, we can suppose that
each $\gamma_{i}$ is a loop based in $x_{0}$. This first step gives the surjectivity of $\Phi$.
 
\smallskip
The critical point of the proof is the determination of the kernel of $\Phi$. 
We consider two factorizations as above,
$\Phi([\gamma_{0}]\ast\dots\ast[\gamma_{\ell}])=\Phi([\gamma'_{0}]\ast\dots\ast [\gamma'_{k}])$.
Thus, the two loops $\gamma_{0}\dots\gamma_{\ell}$ and $\gamma'_{0}\dots\gamma'_{k}$ are $\ov{p}$-homotopic
and there exists a full $\ov{p}$-homotopy
$F\colon [0,1]\times [0,1]\to X$
between them.
Unlike the situation of paths, the support of the homotopy is not necessarily in $X\menos \Sigma_{X}$; i.e.
the subset $F^{-1}(\Sigma_{X})$ is not necessarily empty.
However the $\ov{p}$-allowability condition gives us a control on it and we know that 
$F^{-1}(\Sigma_{X})$  consists in a finite number of points located in the interior of the square. 
Without loss of generality, by using an appropriate subdivision,  we can suppose that $F^{-1}(\Sigma_{X})$
is reduced to one point $v$ in the interior of $[0,1]\times [0,1]$.
As in the surjective part of the proof, we use the compactness of $[0,1]\times [0,1]$ to get a subdivision of it
so that each small square  lies either in $U_{0}$ or in $U_{1}$. 
Here, we can arrange the subdivision in such a way that the point $v$ does not belong to any of the boundaries
of the small squares. This implies that each small square is  a full $\ov{p}$-homotopy in $U_{0}$ or $U_{1}$.
The rest of the proof consists to the use of the path-connectedness of $U_{0}$, $U_{1}$, $U_{0}\cap U_{1}$
to transform each small square in a $\ov{p}$-homotopy between loops based in $x_{0}$.
This is done with the introduction of paths from $x_{0}$ to the corners of the small squares. 
As this can be done in $X\menos \Sigma_{X}$, 
each of the small squares remains a full $\ov{p}$-homotopy.
This gives the conclusion, as in the classic topological situation.
\end{proof}

\begin{corollary}\label{cor:thommather}
Let $X$ be a connected normal Thom-Mather space (\cite{MR2958928}) with a finite number of strata.
Then, for any $j\geq 0$, there is an isomorphism
$$\pi_{j}^{\ov{t}}(X)\cong \pi_{j}(X).$$
\end{corollary}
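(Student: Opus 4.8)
The plan is to prove the isomorphism $\pi_j^{\ov t}(X) \cong \pi_j(X)$ by showing that the inclusion-induced map $\crG_{\ov t} X \to \sing X$ is a weak equivalence, using \lemref{lem:quillen}. Since $X$ is a normal Thom-Mather space, and in particular a normal CS set, \remref{rem:conexenormal} gives that $X$ is connected iff $X \menos \Sigma$ is connected, while \remref{exam:pi0} (applied with $\ov p = \ov t$) gives $\pi_0^{\ov t}(X) = \pi_0(X \menos \Sigma)$; combined with the surjection $\pi_0(X \menos \Sigma) \to \pi_0(X)$ that factors the inclusion, this handles $\pi_0$. For the fundamental group and higher homology, I would proceed by induction on the depth of $X$, using a good open cover adapted to the Thom-Mather structure.

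First I would set up the induction on $\depth X$. If $\depth X = 0$ then $X = X \menos \Sigma$ is a manifold and $\crG_{\ov t} X = \sing X$, so there is nothing to prove. For the inductive step, the Thom-Mather structure (finitely many strata) provides, around the minimal-depth stratum, tubular neighborhoods whose intersections with the rest of $X$ are products $U \times \rc L$ for links $L$ of strictly smaller depth. Here the key point for the top perversity is the cone computation: by \propref{prop:coneandhomotopy} and the fact that $D\ov t(\tv) = \ov t(\tv) - \ov t(\tv) = $ — more precisely $D\ov t = \ov 0$, so $D\ov t(\tv) = 0$ is \emph{not} what we want; rather for $\ov t$ one has $D\ov t = \ov 0$ and... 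I must be careful: $D\ov t + \ov t = \ov t$ forces $D\ov t = \ov 0$, hence $D\ov t(\tv) = 0$. That would collapse too much. So instead the relevant fact is that for $\ov p = \ov t$ every simplex is $\ov t$-allowable in the appropriate range — indeed the $\ov t$-allowability condition $\dim \sigma^{-1}S \le \dim \Delta - 2 - D\ov t(S) = \dim \Delta - 2$ — hmm, this is restrictive. Let me reconsider: the point is that for $\ov t$, \propref{prop:coneandhomotopy} with $D\ov t(\tv) = 0$ gives $\pi_\ell^{\ov t}(\rc X) = 0$ for $\ell \geq 1$ and $= \pi_0^{\ov t}(X)$ for $\ell = 0$, which matches $\pi_\ell(\rc X)$ since the cone is contractible only if... no, $\rc X$ is contractible, so $\pi_\ell(\rc X) = 0$ for $\ell \geq 1$. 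Good — so the cone formula for $\ov t$ is consistent with the ordinary homotopy of the (contractible) open cone. Thus on the product pieces $U \times \rc L$, \propref{prop:homotopyxR} and the cone formula give $\pi_\ell^{\ov t}(U \times \rc L) \cong \pi_\ell(U)$, matching $\pi_\ell(U \times \rc L)$.

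With this local model in hand, I would assemble the global statement. For $\pi_1$: cover $X$ by $U_0 = X \menos (\text{minimal stratum})$ and $U_1 = $ a Thom-Mather tube around the minimal stratum, arrange (by the local product structure and passing to path components) that $U_0, U_1, U_0 \cap U_1$ are path-connected — or handle several components via iterated Van Kampen. Apply \thmref{thm:VK} to $\pi_1^{\ov t}$ and the classical Van Kampen theorem to $\pi_1$; the inductive hypothesis applies to $U_0$ (smaller depth after shrinking, or handled by a secondary induction) and the local computation applies to $U_1 \simeq_s$ a mapping-cylinder neighborhood and to $U_0 \cap U_1$, which deformation retracts to a bundle of open cones minus apexes. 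The two pushout diagrams then match, giving $\pi_1^{\ov t}(X) \cong \pi_1(X)$. For higher homology groups with an arbitrary local coefficient system $\cE$ on $\sing X$: use the Mayer-Vietoris sequence of \thmref{thm:MV} for $\crG_{\ov t}$ against the ordinary Mayer-Vietoris sequence for $\sing$, with the five lemma and the inductive hypothesis on the pieces. Then \lemref{lem:quillen} concludes that $\crG_{\ov t} X \to \sing X$ is a weak equivalence, hence $\pi_j^{\ov t}(X) \cong \pi_j(X)$ for all $j$.

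The main obstacle I expect is organizing the induction cleanly: the open sets in the cover do not literally have smaller depth ($U_0 = X \menos S_{\min}$ can still have depth $\depth X - 1$, which is the point, but one must also treat the non-path-connected case and the compatibility of the chosen basepoints across the pieces), and one must verify that the Thom-Mather control data genuinely yields the stratified homotopy equivalences $U_1 \simeq_s$ (cone bundle) and $U_0 \cap U_1 \simeq_s$ (deleted cone bundle) with the correct behavior on codimensions so that \corref{cor:gajerhomotopy} applies. A secondary technical point is checking that the subdivision arguments behind \thmref{thm:MV} and \thmref{thm:VK} interact correctly with the chosen cover, but those are already packaged in the cited statements, so the real work is the inductive bookkeeping and the identification of the local models via the cone formula \propref{prop:coneandhomotopy}.
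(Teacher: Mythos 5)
Your overall skeleton does match the paper's: reduce everything to \lemref{lem:quillen}, handle $\pi_0$ via normality, get $\pi_1^{\ov t}$ by Van Kampen for the cover consisting of a tube around a minimal stratum and its complement, and compare homology with arbitrary local coefficients by Mayer--Vietoris, the computational input being \propref{prop:homotopyxR} and the cone formula \propref{prop:coneandhomotopy} with $D\ov t(\tv)=0$. The genuine gap is in the piece you call $U_1$. The Thom--Mather tube $E$ around a minimal stratum $S$ is a locally trivial bundle $\rc L\to E\to S$, not a product $U\times \rc L$, and it is not stratified homotopy equivalent to a product or to its core $S$ in the sense of \defref{def:homotopyequistrat} (a stratified homotopy equivalence induces a bijection of strata), so \corref{cor:gajerhomotopy} cannot convert your product computation into $\pi_*^{\ov t}(E)\cong\pi_*(E)$; the same objection applies to $U_0\cap U_1=E\menos S$, an $(L\times\, ]0,1[)$-bundle over $S$, which is not an input of your induction either. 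As written, your five-lemma/Van Kampen comparison therefore has no justification for two of the three pieces of the cover. The paper fills exactly this hole with Gajer's fibration long exact sequence (\cite[Theorem 2.3]{MR1404919}), giving $\pi_1^{\ov t}(E)\cong\pi_1(S)$, and, for all higher degrees, by invoking King's bootstrap theorem (\cite[Section 3]{MR800845}, \cite[Theorem B]{CST3}) applied to the natural transformation $H_*(\crG_{\ov t}X;\cE)\to H_*(X;\cE)$; it is precisely that theorem's machinery (conical charts, Mayer--Vietoris, increasing unions, Zorn) which takes care of bundles over strata, and only its local condition needs the cone computation. If you want to avoid the citation, you must supply that layer yourself: a further Mayer--Vietoris/colimit induction over a trivializing cover of $S$, or a comparison of Serre-type spectral sequences with local coefficients for the two fibrations $\crG_{\ov t}E\to \sing S$ and $\sing E\to\sing S$.

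Two smaller points. First, your induction scheme on $\depth X$ alone is insufficient: removing one minimal stratum need not lower the depth when several minimal strata are present (your parenthetical suggests it always drops by one), and the paper inducts on the depth \emph{and} the number of minimal strata. Second, normality is needed not only for the global $\pi_0$ statement but inside each local model: one needs $\pi_0^{\ov t}(\rc L)=\pi_0(L\menos\Sigma_L)=0$ for the cone formula in degree $0$ to match the contractible cone (this is exactly where the paper uses normality when checking King's condition (iii)). These are repairable, but together with the bundle issue they are the very points the paper's proof is organized around.
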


\begin{proof}
From \remref{rem:conexenormal}, we deduce the connectivity of $X\menos \Sigma$ and we can omit the reference to the basepoint.
Let $S$ be one of the minimal strata. There is a locally trivial fibration
$\rc L\to E\to S$, playing the role of a tubular neighborhood.
From \cite[Theorem 2.3]{MR1404919}, we deduce a long exact sequence
$\ldots\to \pi_{1}^{\ov{t}}(\rc L)\to \pi_{1}^{\ov{t}}(E)\to \pi_{1}^{\ov{t}}(S)\to\dots$.
The space $S$ has only one stratum so $\pi_{1}^{\ov{t}}(S)=\pi_{1}(S)$.
 From \propref{prop:coneandhomotopy}, we know $\pi_{1}^{\ov{t}}(\rc L)=0$.
Thus we deduce $\pi_{1}^{\ov{t}}(E)=\pi_{1}(S)$.
Now  we do an induction on the depth and the number of minimal strata. The inductive step is reduced to an open cover of $X$ given by
$E$ and $X\menos S$. 
The hypotheses of connectivity are satisfied: 
for $E$, it is a consequence of the connectivity of $S$ and $\rc L$, 
for $X\menos S$ it comes from $X\menos \Sigma$ connected, $X\menos \Sigma\subset X\menos S$ and $\ov{X\menos \Sigma}=X$.
Finally,  $E\menos S$ is the total space of a locally trivial fiber bundle, of basis $S$ and fiber $L\times ]0,1[$, thus $E\menos S$  is connected since $L$ and $S$ are so.
The intersection fundamental group of  $E$ is computed as above. On the two open subsets, $X\menos S$, $(X\menos S)\cap E=E\menos S$, we have the induction hypothesis.
Thus, from \thmref{thm:VK},  we deduce  $\pi_{1}^{\ov{t}}(X)\cong \pi_{1}(X)$.

With \lemref{lem:quillen}, we can now replace homotopy groups by homology groups in a local system of coefficients, $\cE$.
We use a theorem of King (\cite[Section 3]{MR800845}) detailed in \cite[Theorem 5.1]{CST3},
applied to the natural transformation
$\psi_{X}\colon H_{*}(\crG_{\ov{t}}X;\cE)\to H_{*}(X;\cE)$. 
This theorem says that such transformation is an isomorphism if
 its restrictions to the open subsets of CS sets verify the following properties.
\begin{enumerate}[i)]
\item There are Mayer-Vietoris sequences and $\psi$ induces a commutative diagram between them. 
\item If $(U_{\alpha})$ is an increasing sequence of open subsets and $\psi_{U_{\alpha}}$ is an isomorphism 
for each $\alpha$, then $\psi_{\cup_{\alpha} U_{\alpha}}$ is an isomorphism.
\item If $L$ is a compact filtered space such that $X$ has an open subset $U$ stratified homeomorphic to
$\R^i\times \rc L$ and if $\psi_{U'}$ is an isomorphism for $U'=\R^i\times (\rc L\menos \{\tv\})$ (where $\tv$ is the apex)
then so is $\psi_{U}$.
\item If $U$ is an open subset of $X$ contained within a single stratum and homeomorphic to 
an Euclidean space, then $\psi_{U}$ is an isomorphism.
\end{enumerate}
As $H_{*}(\crG_{\ov{t}}X;\cE)$ admits Mayer-Vietoris sequences (\thmref{thm:MV}), 
the point iii) is the only one which deserves attention.
Looking to its conclusion, we notice from Propositions~\ref{prop:homotopyxR} and \ref{prop:coneandhomotopy},
that
$\pi_{*}(\crG_{\ov{t}}(\R\times \rc L))\cong \pi_{*}(\crG_{\ov{t}}\rc L)$
is trivial except in degree 0. The hypothesis of normality gives 
$\pi_{0}(\crG_{\ov{t}}(\R\times \rc L))\cong \pi_{0}(L)\cong 0$. 
As $\pi_{i}(\R\times \rc L)=0$ for any $i$, the result follows from the Whitehead theorem.
%
\end{proof}

We are interested in the topological invariance of the intersection fundamental group of CS sets for GM-perversities.
The following example shows that this property is not satisfied in general.

\begin{example}\label{ex:poincaresphere}
 The suspension $\Sigma M$ of a topological space $M$ being the main ingredient of the example, let us begin with
 two basic properties.\\
 1) If $\Sigma M$ is homeomorphic to a sphere $S^n$, then $M$ has the homotopy type of
 $S^{n-1}$. (Just remove the two vertices of $\Sigma M$ to get a homeomorphism
 $M\times ]-1,1[\cong S^{n-1}\times ]-1,1[$.)\\
 2) If $\Sigma M$ is homeomorphic to a sphere $S^n$ and $M$ is a manifold, 
 from 1) and the theorems of 
 Smale-Freedman-Donaldson-Perelman (also called ``Poincar\'e conjecture''), then $M$ is a sphere.
 
Let $P$ be the Poincar\'e sphere, a manifold of dimension three with the homology of the 3-sphere and such that 
$\pi_{1}(P,v)\neq 1$, with $v\in P$.
From the previous recalls, we deduce that the suspension $\Sigma P$ is not homeomorphic to $S^4$
and has  no structure of manifold,  but
has the homotopy type of $S^4$.
Let  $\tv_{1},\tv_{2}$ be  the two apexes of  $\Sigma P$, constituting the singular set of $\Sigma P$.
We choose a GM-perversity $\ov{p}$ with $D\ov{p}(4)=1 $. 
By definition,  two $\ov{p}$-allowable simplexes,
$\xi\colon (\Delta^1,\partial \Delta^1)\to (\Sigma P,v)$ and $\eta\colon \Delta^2\to \Sigma P$,
  verify
$\dim\xi^{-1}(\tv_{i})\leq \dim \Delta-2-D\ov{p}(4)=-2$
and $\dim\eta^{-1}(\tv_{i})\leq -1$. Thus, we have $\pi_{1}^{\ov{p}}(\Sigma P,v)=\pi_{1}(P,v)$.

We now consider the double suspension  of $P$ and denote by $\tw_{1},\tw_{2}$ the two new apexes.
We filter $\Sigma^2 P$ by 
$\{\tw_{1},\tw_{2}\}\subset \Sigma\{\tv_{1}, \tv_{2}\}\subset \Sigma^2 P$.
We choose a GM-perversity such that  $D\ov{p}(4)= D\ov{p}(5)=1$. 
A similar computation gives $\pi_{1}^{\ov{p}}(\Sigma^2 P,v)=\pi_{1}^{\ov{p}}(\Sigma P,v)=\pi_{1}(P,v)\neq 1$.
As $\Sigma^2 P$ is homeomorphic to the sphere $S^5$, this example shows that the intersection fundamental group is not a
topological invariant in general.
This situation does not appear in the PL case since the homeomorphism between $\Sigma^2 P$ and $S^5$
is not a PL map. 
In fact, in \cite[Theorem 2.7]{MR1404919} Gajer shows that the $\ov{p}$-intersection homotopy groups of a polyhedron is a PL invariant.
\end{example}

\begin{remark}\label{rem:overlap}
The overlapping of \propref{prop:LinkUnique} and \exemref{ex:poincaresphere} calls out. 
In the first reference, we prove that links of a point in a CS set have isomorphic intersection homotopy groups 
for any perversity. In the second one, we use the example of two CS set structures  on a particular suspension space,
with an ad'hoc perversity, to highlight 
the  failure of a general topological invariance of intersection homotopy groups.

 \smallskip
 Let us first make a few reminders about the properties of links in CS sets. 
 We  consider two CS set structures on the same topological space $X$ and a point $x\in X$.
 Let $N=(\rc L,\tv)$ and $N'=(\rc L',\tv')$ be two conical charts of $x$ in the respective CS set structures.
 From a result of Stallings on conical neighborhoods (see \cite[Corollary 2.10.2]{FriedmanBook}),
 there exists a relative homeomorphism between $N$ and $N'$.
  (This does not imply the existence of a homeomorphism between the links!) 

 \smallskip
 In the case of a \emph{fixed} structure of CS set on a topological space $X$, the invariance of the intersection
 homotopy groups of the links, established in  \propref{prop:LinkUnique},
 follows from the existence of stratified homotopy equivalences between charts and links. 
 
 \smallskip
 Let us consider two homeomorphic spaces with CS set structures, and a GM-perversity $\ov{p}$.
\emph{The topological invariance} means that the corresponding $\ov{p}$-intersection homotopy groups are isomorphic. 
That is, we now have \emph{two} different CS set structures on the ``same'' topological space and we ask wether
the intersection homotopy groups are isomorphic.
Our test for this topological invariance is the double suspension on a Poincar\'e homological 3-sphere, $P$,
which is known to be homeomorphic to $S^5$. 
 We endow  $\Sigma^2 P$ with the filtration
 $\{\tw_{1},\tw_{2}\}\subset \Sigma\{\tv_{1}, \tv_{2}\}\subset \Sigma^2 P$
 of \exemref{ex:poincaresphere}.
 The singular set is the circle $\gamma= \Sigma\{\tv_{1}, \tv_{2}\}$.
Let $\varphi\colon \Sigma^2 P\to S^5$ be the Edwards homeomorphism. 
 We get \emph{two different structures} of CS set on the topological space $S^5$:
a ``trivial'' one comes from the manifold structure of $S^5$ and a second one is the image by $\varphi$
of the previous chosen filtration on $\Sigma^2 P$.
As we only consider  these two structures in this remark, we call ``non-trivial'' the second one.
As said above,  conical charts in $\tw_{1}$ (or $\tw_{2}$)
are homeomorphic, so
$\R^0\times \rc S^4\cong \R^0 \times \rc \Sigma P$.
We emphasize that this is only a topological result without mention of filtrations. 
This is a crucial point in the analysis of links made below.

\smallskip
Set $\tu=\varphi(\tw_{1})\in S^5$. From  the manifold structure, the point $\tu$ admits an open neighborhood
homeomorphic to $\rc S^4$. We endow $S^4$ with the filtration induced by the non-trivial filtration of $S^5$.
At the request of a referee, we study whether the sphere $S^4$,
 equipped with this filtration, can be a link of $\tu$ in the non-trivial CS set structure of $S^5$. 
We prove:

\noindent\centerline{
{\tt{Claim:}} \emph{For the non-trivial filtration of $S^5$, $S^4$ is not a link of $\tu$.}
}

\smallskip
Let us suppose that $S^4$ is such a link; i.e.,
$S^4$ and $\Sigma P$ are two links of $\tu$.
The claim will be inferred from the existence of a contradiction.
We choose a perversity $\ov{p}$ such that $D\ov{p}(4)=D\ov{p}(5)=1$.
Having only one CS set structure on $S^5$ (the non-trivial one), we can apply
\propref{prop:LinkUnique} and get $\pi_{1}^{\ov{p}}(S^4)\cong \pi_{1}^{\ov{p}}(\Sigma P)$.
From a computation made in \exemref{ex:poincaresphere}, we deduce 
\begin{equation}\label{equa:pipcontradiction}
\pi_{1}^{\ov{p}}(S^4)\cong \pi_{1}^{\ov{p}}(\Sigma P)\neq 1.
\end{equation}
To obtain a contradiction, we directly determine this fundamental intersection group.
Let us first note that $\varphi(\gamma)\cap S^4$ is a set of isolated points, possibly empty.
By definition,  two $\ov{p}$-allowable simplexes,
$\xi\colon \Delta^1 \to S^4$ and $\eta\colon \Delta^2\to S^4$,
  verify
$\dim\xi^{-1}\varphi(\gamma)\leq \dim \Delta-2-D\ov{p}(4)=-2$
and $\dim\eta^{-1}\varphi(\gamma)\leq -1$.
Therefore $\pi_{1}^{\ov{p}}(S^4)\cong \pi_{1}(S^4\menos (\varphi(\gamma)\cap S^4))$.
 From \eqref{equa:filteredlink} in the definition 
 of a CS set, we deduce that, 
for each $x\in \varphi(\gamma)\cap S^4$, the segment $[x,\tu]$ is included in $\varphi(\gamma)$.
The map $\varphi$ being an embedding, the intersection
$\varphi(\gamma)\cap S^4$
is a set of at most two points. In any of the three possible cases, we have 
$\pi_{1}(S^4\menos\varphi(\gamma)\cap S^4)=1$ and a contradiction with \eqref{equa:pipcontradiction}.

\smallskip
In conclusion, the obstruction to $S^4$ being a link is the non-triviality of the intersection homotopy group 
$\pi_{1}^{\ov{p}}(\Sigma P)$.
Let us also point out that the topological embedding $\gamma\colon S^1\to S^5$ is  wild and far 
from a smooth one, as shown for instance in \cite{MR4030354}.
 \end{remark}


In \exemref{ex:poincaresphere}, some singular points of $\Sigma^2 P$  become regular in $S^5$. 
The next result proves that is the only obstruction to having a topological invariant.
Recall that a \emph{coarsening} of a filtered space $X$ is a filtered space $Y$ 
on the same topological space, such that each stratum of $Y$  
is a union of strata of $X$. The identity induces a stratified map, $\nu\colon X\to Y$.

\begin{definition}\label{def:source}
Let $\nu\colon X\to Y$ be a coarsening of a filtered space $X$. An \emph{exceptional stratum} is a singular stratum of $X$ which is included in a regular stratum of $Y$.
 A \emph{source stratum of a stratum $T$} of $Y$ is a stratum $S$ of $X$ included in  $T$ and of the same dimension.  
\end{definition}

\begin{theoremb}\label{thm:thepione}
Let $\nu\colon X\to Y$ be a coarsening of CS sets and
$\ov{p}$ be a codimensional perversity  such that $\ov{p}\leq \ov{t}$. 
If there is no exceptional stratum, then
the map $\nu$ induces isomorphisms, 
$\pi_{0}^{\ov{p}}(X)\cong \pi_{0}^{\ov{p}}(Y)$ and
$\pi_{1}^{\ov{p}}(X,x_{0})\cong \pi_{1}^{\ov{p}}(Y,x_{0})$,
for any regular point $x_{0}$.
\end{theoremb}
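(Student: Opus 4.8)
The plan is to reduce \thmref{thm:thepione} to the Van Kampen theorem \thmref{thm:VK} plus the cone computation \propref{prop:coneandhomotopy}, via an induction on the depth of $X$ and on the number of strata of $X$ that are not source strata of their image in $Y$. The base case is $\nu=\id$, where there is nothing to prove. For the inductive step, since there is no exceptional stratum, every singular stratum of $X$ maps into a singular stratum of $Y$; if $\nu$ is not the identity there is a stratum $T$ of $Y$ strictly containing more than one stratum of $X$, and we may choose $T$ minimal for the relation $\prec$ on $\cS_Y$ among such strata. Let $S\subset T$ be a source stratum of $T$ (same dimension as $T$), which is necessarily locally dense near itself; I want to excise $S$ in a controlled way.

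First I would set up the open cover. Using that $X$ is a CS set, around (the closure issues near) $S$ one builds an open neighborhood $E$ of $S$ in $X$ of the form of a mapping-cylinder/tube, stratified homotopy equivalent to $S$ with $\rc L$-fibers, exactly as in the proof of \corref{cor:thommather}; set $U_0=E$ and $U_1=X\menos \overline{S}$ — or, more carefully, one takes $U_1$ to be $X$ minus a smaller closed tube around $S$, so that $U_0\cap U_1$ deformation-retracts onto the deleted-tube $E\menos S$, which fibers over $S$ with fiber $L\times\,]0,1[$. On the $Y$-side, $\nu$ carries this to the analogous cover $\{\nu(U_0),\nu(U_1)\}$ of $Y$; because $S$ has the same dimension as $T$ and no exceptional stratum occurs, $\nu(U_0)$ is a coarsening of $U_0$ with one stratum (the source $S$) merged into $T$ and strictly fewer ``non-source'' strata, while $\nu(U_1)$ and $\nu(U_0\cap U_1)$ are coarsenings of $U_1$ and $U_0\cap U_1$ of strictly smaller depth or with strictly fewer non-source strata. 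All the relevant open sets can be arranged path-connected: $E$ because $S$ and $\rc L$ (hence $L$, by normality-type arguments, or directly since links are connected) are connected; $E\menos S$ because its fiber $L\times\,]0,1[$ and base $S$ are connected; $U_1$ and $X$ by \propref{prop:petitchemin} together with connectedness of the regular part — here one uses that $\ov{p}\le\ov t$ so $\pi_0^{\ov p}$ is detected on the regular part and that $\overline{X\menos\Sigma}=X$, same for $Y$.

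With the cover fixed, I would run Van Kampen on both $X$ and $Y$ and compare. For $\pi_0$: \propref{prop:petitchemin} (and \remref{exam:pi0}) identify $\pi_0^{\ov p}$ with $\pi_0$ of the regular part on both sides, and by hypothesis the regular strata of $X$ and $Y$ are related by $\nu$ in a way that, together with no exceptional stratum, gives a bijection of connected components; more simply, a Mayer–Vietoris/Van Kampen $\pi_0$ argument with the cover shows $\nu_*$ is a bijection once it is for the three smaller pieces, which is the induction hypothesis. For $\pi_1$: \thmref{thm:VK} presents $\pi_1^{\ov p}(X)$ as the pushout of $\pi_1^{\ov p}(U_0)\leftarrow\pi_1^{\ov p}(U_0\cap U_1)\to\pi_1^{\ov p}(U_1)$, and likewise for $Y$; the maps induced by $\nu$ give a morphism of pushout diagrams. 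By the induction hypothesis $\nu$ induces isomorphisms $\pi_1^{\ov p}(U_1)\cong\pi_1^{\ov p}(\nu U_1)$ and $\pi_1^{\ov p}(U_0\cap U_1)\cong\pi_1^{\ov p}(\nu(U_0\cap U_1))$ (smaller depth / fewer non-source strata), so it remains to treat $U_0=E$. Here $E\simeq_s$ the mapping cylinder of $L\times?\to S$, and using \propref{prop:coneandhomotopy} together with the long exact sequence of \cite[Theorem 2.3]{MR1404919} for the fibration $\rc L\to E\to S$, one computes $\pi_1^{\ov p}(E)$ purely from $\pi_1(S)$ and $\pi_1^{\ov p}(\rc L)$ — and the same computation on the $Y$-side for $\nu(E)$, which is the corresponding tube with $S$ replaced by (an open piece of) $T$: since $\dim S=\dim T$, the base is the same up to the induced iso on $\pi_1$ of the regular parts, and $\pi_1^{\ov p}(\rc L)$ contributes nothing in the relevant range because $\ov p\le\ov t$ forces $D\ov p(\tv)\ge 0$. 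Thus $\nu_*\colon\pi_1^{\ov p}(E)\to\pi_1^{\ov p}(\nu E)$ is an isomorphism, and the five lemma for the two pushout presentations (or directly, functoriality of pushouts of groups applied to an isomorphism of spans) gives $\pi_1^{\ov p}(X)\cong\pi_1^{\ov p}(Y)$.

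The main obstacle I anticipate is the geometry of the open cover rather than the algebra: a CS set need not admit honest tubular neighborhoods, so constructing $E$ and the smaller closed tube with the stated stratified-homotopy-equivalence and fibration properties requires care — essentially re-deriving the local structure used in \cite[Lemma 5.3.13]{FriedmanBook} and in the proof of \corref{cor:thommather}, and checking that $\nu$ restricts to the expected coarsening on each piece with the depth/non-source count strictly dropping. A secondary subtlety is making sure the chosen stratum $T$ and source $S$ can always be found when $\nu\ne\id$ and no exceptional stratum exists: one must argue that merging can only occur among strata of equal dimension (else the lower-dimensional one would be ``exceptional'' relative to an intermediate coarsening, or would violate $\nu$ being a stratified map between CS sets), which is where the hypothesis ``no exceptional stratum'' does its essential work, exactly as illustrated by the failure in \exemref{ex:poincaresphere}. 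The $\ov p\le\ov t$ and codimensional hypotheses enter only to guarantee that $0$- and $1$-simplices avoid the singular set (so $\pi_0,\pi_1$ see only the regular part and the cone formula has $D\ov p(\tv)\ge 0$), and that the induced perversities on all the smaller pieces are again codimensional with $\ov p\le\ov t$, so the induction is legitimate.
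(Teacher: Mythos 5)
Your strategy (Van Kampen plus an induction on depth, with a tube neighborhood $E$ of a source stratum $S$ and the fibration $\rc L\to E\to S$) is genuinely different from the paper's argument, but it has gaps that are not just technical care. The central one is the tube itself: a general CS set has only local conical charts, not a global neighborhood of a stratum fibering as $\rc L\to E\to S$; that structure is exactly what the Thom--Mather hypothesis buys in \corref{cor:thommather}, and the present theorem assumes no such thing. You flag this as an ``obstacle,'' but the whole induction rests on it, and ``re-deriving the local structure'' is not available for CS sets (this is why the paper never leaves a single conical chart). Two further claims are false as stated. First, ``no exceptional stratum'' does not force merging to occur only among strata of equal dimension: a lower-dimensional singular stratum of $X$ contained in a singular stratum $T$ of $Y$ is not exceptional (see Recall~\ref{rec:intrinsic}: a stratum of $X^*$ is a union of source strata, dense and open, together with lower-dimensional strata of $X$). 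The hypothesis only guarantees $\Sigma_X=\Sigma_Y$, hence equal regular parts. Second, $\ov{p}\leq\ov{t}$ gives $D\ov{p}\geq 0$, not $D\ov{p}(\tv)=0$; by \propref{prop:coneandhomotopy}, whenever $D\ov{p}(\tv)\geq 1$ one has $\pi_{1}^{\ov{p}}(\rc L)=\pi_{1}^{\ov{p}}(L)$, which is nonzero in general, so the cone fiber does \emph{not} ``contribute nothing'' and your computation of $\pi_1^{\ov p}(E)$ versus $\pi_1^{\ov p}(\nu E)$ does not close (comparing the two would essentially require the theorem for the links). Finally, \thmref{thm:VK} needs the pieces $U_0$, $U_1$, $U_0\cap U_1$ path-connected; connectivity of $E\menos S$ amounts to connectedness of the links, i.e.\ normality, which is not assumed.

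For contrast, the paper's proof is a short local argument requiring none of this machinery: since $\ov{p}\leq\ov{t}$, the $0$- and $1$-simplexes of $\crG_{\ov{p}}X$ and $\crG_{\ov{p}}Y$ lie in the common regular part, which gives the $\pi_0$ statement and surjectivity on $\pi_1$ at once. For injectivity, one takes a $\ov{p}$-allowable $2$-simplex $\eta\colon\Delta^2\to Y$ filling the loop, observes that $\eta^{-1}\Sigma$ is finite, reduces by subdivision to a single interior point $p$ with $\eta$ landing in one conical chart, and then uses density of the source strata of $S^{\nu}$ to replace $\eta$ by a simplex $\eta'$ with the same boundary, coned off to a nearby point of a source stratum $Q$; since $Q$ has the same codimension in $X$ as $S^{\nu}$ has in $Y$ and $\ov{p}$ is codimensional, $\eta'$ is $\ov{p}$-allowable in $X$. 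If you want to salvage your approach, you would have to either restrict to Thom--Mather spaces or replace the global tube by a chart-by-chart argument --- at which point you are essentially reconstructing the paper's proof.
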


Any GM-perversity satisfies the required hypothesis on $\ov{p}$.
The condition imposed on the strata means that the two singular sets are identical and we denote it by $\Sigma=\Sigma_{X}=\Sigma_{Y}$.

\begin{proof}
The first assertion comes from \remref{exam:pi0}
and the surjectivity of $\nu_{*}\colon \pi_{1}^{\ov{p}}(X,x_{0})\to \pi_{1}^{\ov{p}}(Y,x_{0})$ from \eqref{equa:pi1regular}.
For the injectivity of $\nu_{*}$, we consider a $\ov{p}$-allowable loop $\xi\colon [0,1]\to X$ and suppose that there exists a 
$\ov{p}$-allowable simplex $\eta\colon \Delta^2\to Y$, 
whose boundary is $\xi$ on one edge and the constant 
loop on $x_{0}$ on the other two. 
We have to construct a  $\ov{p}$-allowable simplex
$\eta'\colon \Delta^2\to X$ such that $\partial \eta=\partial \eta'$.
From the $\ov{p}$-allowability condition on $\eta$, we know that the subset
$$\eta^{-1}\Sigma=
\cup\left\{ \eta^{-1}T\mid T \;\text{is a singular stratum of }\; Y \;\text{with} \;D\ov{p}(\codim T)=0\right\}$$
is finite. If $\eta^{-1}\Sigma=\emptyset$, we choose $\eta'=\eta$.
Otherwise, by subdivision, we can suppose that $\eta^{-1}\Sigma=\{p\}\in \inte\Delta^2$
and that the image of $\eta$ is included in a conical chart $U$. 
Let us denote by $\varphi\colon \R^m\times \rc L\to U$ this conical chart with $\varphi(0,\tv)=\eta(p)$,
where $\tv$ is the apex of the cone.
The point $\eta(p)$ belongs to a singular stratum $S$ of $X$ and to the singular stratum $S^\nu$ of $Y$.
By the previous  choices, we have $D\ov{p}(\codim S^\nu)=0$.
The family of source strata of $S^\nu$ being an open dense subset of $S^\nu$, we can find a source
stratum $Q$ of $S^\nu$ and a point $x=\varphi(u_{0},\tv)\in U$ as close as we want to $\eta(p)$.
The map $\varphi^{-1}\circ \eta\colon \partial \Delta^2\to \R^m\times \rc L$ can be written
$$\varphi^{-1}(\eta(y))=(f_{1}(y),[f_{2(}y),f_{3}(y)]).$$
Its image is included in $L\menos\Sigma_{L}$ and the map $f_{3}$ does not vanish.
Let us denote by $\flat$ the barycentre of $\Delta^2$.
We define $\eta'\colon \Delta^2\to U$ by
$$\eta'(ty+(1-t)\flat)=\varphi((1-t)u_{0}+tf_{1}(x),[f_{2}(x),tf_{3}(x)]).$$
By construction, we have
${\eta'}^{-1}\Sigma={\eta'}^{-1}Q=\{\flat\}$
and for $t=1$ we get $\partial\eta=\partial\eta'$.
As $Q$ is a stratum of $S^\nu$ it has the same codimension, and we have
$$\dim{\eta'}^{-1}\Sigma=\dim {\eta'}^{-1} Q=\dim \{\flat\}=
2-\codim Q-2=\dim \eta-\codim Q -2.
$$
This gives the $\ov{p}$-allowability of $\eta$ in the CS set $X$, as required.
\end{proof}


Among the coarsenings of a CS set $X$, there is the  intrinsic coarsest CS set, $\nu\colon X\to X^*$,
whose properties are recalled below. 
\thmref{thm:thepione} applied to this coarsest stratification  implies that the intersection
fundamental group for a codimensional perversity is a topological invariant if  no singular stratum of $X$ becomes regular in $X^*$. 
\exemref{ex:poincaresphere} shows the necessity of this restriction.

\medskip
\begin{recall}\label{rec:intrinsic}{(\bf{Intrinsic CS set})}
Let $X$ be a CS set. For the construction of a new structure of CS set on the same topological space in \cite{MR800845}, 
King utilizes an equivalence relation he credits Dennis Sullivan with: two points $x_{0}$ and $x_{1}$ are equivalent if there exist neighborhoods $U_{i}$ of $x_{i}$ with a homeomorphism $(U_{0},x_{0})\cong (U_{1},x_{1})$. 
The equivalence classes are union of strata and the CS set $X$ has a new filtration by chosing $X_{j}^*$ as the
union of equivalence classes which only contains components of strata of dimension less than or equal to $j$.
This defines a CS set denoted by $X^*$ and called \emph{the intrinsic CS set associated to  $X$.}
The identity map  is a stratified map   denoted by $\nu\colon X\to X^*$.
By construction, a stratum $T$ of $X^*$ is a locally finite union of strata of $X$. 
The set of source strata (\defref{def:source}) of $T$ is a dense open subset of $T$.

The CS set $X^*$ is a coarsening of $X$. We can observe that a singular stratum of $X$ can be included in a singular stratum or in a regular stratum of $X^*$ but a regular stratum of $X$ stays regular in $X^*$. 
Thus  the singular set $\Sigma_{X^*}=X\menos X^*_{n-1}$ is a closed subspace of $\Sigma_{X}$.
The CS sets $X$ and $X^*$ have the same underlying topological space; if there is no ambiguity, we denote it by $X$.
If $U$ is an open subset of $X$, we also denote by $U$ the CS set with the structure induced by $X$
and by $U^*$ the CS set with the structure induced by $X^*$. 
Let us recall from \cite{MR800845} how are the  structures of $X$ and $X^*$ in the neighborhood of a point.
In the following diagram,
\begin{equation}\label{equa:XandX}
\xymatrix{
\R^k\times \rc W\ar[r]^-{\varphi}\ar[d]_-{h}&
V\ar[d]^-{f}\\
\R^m\times\rc L  \ar[r]^-{\psi}&
V^*,
}\end{equation}
the map $\varphi$ is a conical chart for $X$ and $\psi$ a conical chart for $X^*$.
The maps $\varphi$ and $\psi$ are stratified homeomorphisms but $f$ and $h$ are only
homeomorphisms and stratified maps.
The space $W$ is a link of $X$ and  $L$ is a link of $X^*$. We have  $m\geq k$. 
(In the case of a regular point, the link is the empty set.)
Without loss of generality, we can suppose $h(0,\tw)=(0,\tv)$, where $\tv$ and $\tw$ are the apexes of the cones.
We denote $s=\dim W$, $t=\dim L$ and deduce $s\geq t$ from $s+k=m+t$. 
The map $h$ also verifies $h(\R^k\times \{\tw\})=B\times \{\tv\}$, with $B$ closed, and
$h^{-1}(\R^m\times \{\tv\})=\R^k\times \rc A$. 
By writing $\R^m\cong \R^k\times \rc A$ as
$\rc S^{m-1}\cong \rc S^{k-1}\times \rc A\cong \rc(S^{k-1}*A)$, we see that
$A$ is a   homology  sphere of dimension $(m-k-1)$. 
In particular, $W$ itself is an $(m-k-1)$-dimensional homological sphere when $L=\emptyset$.
\end{recall}

 \begin{definition}\label{def:clivage}
 Let $\ov{p}$ be a GM-perversity. The \emph{cleaving point} of $\ov{p}$ is the number
 $\ell_{\ov p} \in \overline \N$ defined by
 $$
\ell_{\ov p} = \sup \,\{ k \in \overline \N \mid \ov p(k) = \ov t(k)\} =  \sup \,\{ k \in  \overline \N \mid D\ov p(k) = 0\}.
$$
\end{definition}

In the general case of a filtered space, we show that the cleaving point reduces the determination
of the $\ov{p}$-intersection fundamental groups to the case  of the top perversity.

\begin{proposition}\label{prop:letopjevousdis}
Let $\ov p$ be a GM-perversity, $X$ be an $n$-dimensional  filtered space without stratum of codimension~1 
and $x_{0}$ be a regular point of $X$. 
Then, we have
$$
\pi_1^{\ov p}(X, x_{0}) =  \pi_1^{\ov t} (X \menos X_{n-\ell_{\ov p}-1},x_{0}).
$$
\end{proposition}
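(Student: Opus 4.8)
The plan is to realise the right-hand side as a simplicial subset of $\crG_{\ov p}X$ that coincides with it in dimensions $\le 2$. First I would set up the geometry. Since $D\ov p$ is again a GM-perversity, $D\ov p(2)=0$ and $D\ov p(k)\le D\ov p(k+1)\le D\ov p(k)+1$, so $\ell_{\ov p}\ge 2$ and $D\ov p(k)=0$ precisely for $k\le\ell_{\ov p}$. The skeleton $X_{n-\ell_{\ov p}-1}$ is the union of the strata $S$ with $\dim S\le n-\ell_{\ov p}-1$, i.e. with $\codim S>\ell_{\ov p}$, i.e. with $D\ov p(\codim S)\ge 1$. Put $Y=X\menos X_{n-\ell_{\ov p}-1}$, an open subset of $X$ with the induced filtration. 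As $\ell_{\ov p}\ge 2$, $Y$ contains the regular part of $X$ (in particular $x_{0}$), so $Y$ is again $n$-dimensional, each surviving stratum keeps its codimension, and the perversity induced on $Y$ is its top perversity $\ov t$ (because $D\ov p=0$ on every singular stratum of $Y$). If $\ell_{\ov p}\ge n$ — in particular if $\ell_{\ov p}=+\infty$, i.e. $\ov p=\ov t$ — then $X_{n-\ell_{\ov p}-1}=\emptyset$ and $D\ov p$ vanishes on all strata of $X$, so $\crG_{\ov p}X=\crG_{\ov t}X$ and there is nothing to prove; assume henceforth $X_{n-\ell_{\ov p}-1}\ne\emptyset$.

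Next I would compare the two fullness conditions. Writing $\ov p(k)=k-2-D\ov p(k)$, the $\ov p$-allowability inequality \eqref{equa:admissibleST} for $\sigma\colon\Delta^\ell\to X$ and a singular stratum $S$ becomes $\dim\sigma^{-1}S\le\ell-2-D\ov p(\codim S)$. For $\codim S\le\ell_{\ov p}$ the right-hand side equals $\ell-2$, which is exactly the $\ov t$-allowability condition in $Y$ for the corresponding stratum; for $\codim S>\ell_{\ov p}$ it forces $\dim\sigma^{-1}S\le\ell-3$. Two consequences follow. (a) An $\ov t$-full simplex of $Y$ has image in $Y$, hence avoids all strata of codimension $>\ell_{\ov p}$, and on the remaining ones its allowability is the same as $\ov p$-allowability in $X$; so it is $\ov p$-full in $X$, and the inclusion $Y\hookrightarrow X$ yields a simplicial inclusion $\iota\colon\crG_{\ov t}Y\hookrightarrow\crG_{\ov p}X$ onto the simplicial subset of $\ov p$-full simplices with image in $Y$. (b) \emph{Key point:} if $\sigma\colon\Delta^\ell\to X$ is $\ov p$-full with $\ell\le 2$, then for each stratum $S$ with $\codim S>\ell_{\ov p}$ we get $\dim\sigma^{-1}S\le\ell-3\le-1$, so $\sigma^{-1}S=\emptyset$; thus the image of $\sigma$ lies in $Y$, and by the matching just noted $\sigma$ is $\ov t$-full as a simplex of $Y$. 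Hence $\iota$ is a bijection on simplices of dimension $\le 2$, that is, an isomorphism of $2$-skeleta. (In dimension $\ge 3$ it is merely injective, since a $\ov p$-full simplex may meet a codimension-$>\ell_{\ov p}$ stratum in a nonempty finite set — this is precisely why we cannot expect $\iota$ to be a homotopy equivalence, only a $\pi_{\le 1}$-isomorphism.)

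Finally I would conclude. The simplicial sets $\crG_{\ov p}X$ and $\crG_{\ov t}Y$ are Kan by \propref{prop:gajerkan}, and $\pi_0$ and $\pi_1$ of a Kan simplicial set are read off its simplices of dimension $\le 2$: $\pi_1$ at the regular point $x_0$ is the set of edge-loops at $x_0$, two such being identified exactly when joined by a $2$-simplex with one degenerate face, and $\iota_*$ is a homomorphism because $\iota$ is simplicial. Since $\iota$ identifies the $0$-, $1$- and $2$-simplices of the two complexes, $\iota_*$ is a bijection on $\pi_0$ and an isomorphism on $\pi_1$: surjectivity uses that every $\ov p$-full loop at $x_0$ already lies in $Y$ (case $\ell=1$ of the key point, or directly \eqref{equa:pi1regular}), and injectivity uses that a $\ov p$-full $2$-simplex realising a $\crG_{\ov p}X$-homotopy between two such loops is, by the key point with $\ell=2$, a $\ov t$-full $2$-simplex of $Y$ realising the same homotopy there. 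This gives $\pi_1^{\ov p}(X,x_0)=\pi_1^{\ov t}(Y,x_0)=\pi_1^{\ov t}(X\menos X_{n-\ell_{\ov p}-1},x_0)$. The one genuinely delicate step is the bookkeeping around the cleaving point — verifying that codimensions are preserved on $Y$, that the induced perversity is exactly $\ov t$, and treating the degenerate range $\ell_{\ov p}\ge n$ separately; granting the standard fact that $\pi_{\le 1}$ of a Kan complex sees only its $2$-skeleton, everything else is the elementary dimension count above.
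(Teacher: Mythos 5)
Your proof is correct and follows essentially the same route as the paper: the dimension count $\dim\sigma^{-1}S\le \ell-2-D\ov p(\codim S)$ shows that the $2$-skeleta of $\crG_{\ov p}X$ and $\crG_{\ov t}(X\menos X_{n-\ell_{\ov p}-1})$ coincide, and $\pi_0,\pi_1$ are then read off the $2$-skeleton. Your extra bookkeeping (the induced perversity on $Y$ being $\ov t$, the degenerate case $\ell_{\ov p}\ge n$, and the full inclusion $\crG_{\ov t}Y\hookrightarrow\crG_{\ov p}X$) is sound but not needed beyond what the paper records.
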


\begin{proof} 
In the proof of \thmref{thm:thepione}, we have already noticed that the 1-skeleton of $\crG_{\ov{p}}X$ is isomorphic 
to the 1-skeleton of $\sing (X\menos \Sigma_{X})$ and that a 2-simplex, $\eta\colon \Delta^2\to X$ is $\ov{p}$-full
if, and only if,
$ \dim \eta^{-1}S \leq 2 - D\ov p (\codim S) -2 = - D\ov p (\codim S)$ for any singular stratum $S$.
Since $D\ov{p}(\codim S)\geq 0$, we distinguish two cases.

$\bullet$ If $D\ov p(\codim S)\geq 1$, which corresponds to $\codim S \geq \ell_{\ov p} +1$ 
or $\dim S \leq n -\ell_{\ov p}-1$, then we have
$ \dim \eta^{-1}S \leq  - D\ov p (\codim S) \leq -1$. This implies
$\eta^{-1}S=\emptyset$, which is equivalent to $\eta \colon \Delta^2 \to X \menos X_{n -\ell_{\ov p} -1}$,
with the restriction on $S$.

$\bullet$ If $D\ov p(\codim S)= 0 $, which corresponds to   $\codim S \leq \ell_{\ov p}$  
or $\dim S \geq n -\ell_{\ov p}$ then we have
$
\dim \eta^{-1}S \leq - D\ov p(\codim S)=0 $
 which is equivalent to $ \dim \eta^{-1}S    \leq 2 - D\ov t(\codim S) -2$,
with the restriction on $S$.

\smallskip
In short, the $\ov{p}$-full 2-simplexes are exactly the simplexes  $\eta \colon \Delta^2 \to X \menos X_{n -\ell_{\ov p} -1}$ 
such that $\dim \eta^{-1}S \leq 2 - D\ov t(\codim S) -2$ for any stratum $S$ of  $X \menos X_{n -\ell_{\ov p} -1}$.
We have proven 
$$ 
\crG_{\ov p}^2(X) = \crG_{\ov t}^2(X\menos X_{n -\ell_{\ov p} -1})
$$
and, therefore,
$\pi_1^{\ov p}(X,x_{0}) = \pi_1^{\ov t}(X\menos X_{n -\ell_{\ov p} -1},x_{0})$.
\end{proof}

Under the hypothesis of \corref{cor:thommather}, we can reduce  
the determination of  $\ov{p}$-intersection fundamental groups to that of classic fundamental groups with
$\pi_{1}^{\ov{p}}(X,x_{0})=\pi_{1}(X \menos X_{n-\ell_{\ov p}-1},x_{0})$.

\section{Topological invariance of intersection homotopy groups}\label{sec:invariance}

In this section, we study the topological invariance of perverse homotopy groups and prove it 
when the regular parts of $X$ and of its intrinsic filtration coincide.
Let us notice that this is a condition on the topological space $X$ itself: 
it means that there is no regular point equivalent to a singular point for the relation of King
(\cite{MR800845}) quoted in Recall~\ref{rec:intrinsic}.
We have already proven that the perverse fundamental group is a topological invariant under the same hypothesis (\thmref{thm:thepione}) and
that there are counterexamples in the general case (\exemref{ex:poincaresphere}).

\begin{theoremb}\label{thm:homotopyinvariance}
Let $\ov{p}$ be a GM-perversity  and $X$ be a CS set without  stratum of codimension 1. 
We denote by  $X^*$  the associated CS set  with the intrinsic filtration.
If there is no exceptional stratum, then
the map $\nu\colon X\to X^*$ induces an isomorphism
$\pi_{j}^{\ov{p}}(X,x_{0})\cong \pi_{j}^{\ov{p}}(X^*,x_{0})$,
for any $j$ and any regular point $x_{0}$.
\end{theoremb}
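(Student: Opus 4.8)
The plan is to reduce \thmref{thm:homotopyinvariance} to the already-proven statement for the fundamental group (\thmref{thm:thepione}) via \lemref{lem:quillen}, working by induction on the depth of $X$. By \lemref{lem:quillen}, it suffices to check that $\nu_{*}$ is an isomorphism on $\pi_{0}$, on $\pi_{1}$ at every regular basepoint, and on $H_{*}(-;\cE)$ for every local coefficient system $\cE$ on $\crG_{\ov{p}}X^*$. The first two are exactly \thmref{thm:thepione}; here the hypothesis that there is no exceptional stratum, i.e. $\Sigma_{X}=\Sigma_{X^*}$, is used. So the whole content of the theorem is the homological statement: $\nu$ induces an isomorphism $H_{j}(\crG_{\ov{p}}X;\nu^*\cE)\cong H_{j}(\crG_{\ov{p}}X^*;\cE)$ for all $j$ and all local systems $\cE$.

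To prove the homological statement I would imitate King's proof of topological invariance of intersection homology, in the form of the axiomatic characterization used in \corref{cor:thommather} (King's theorem, as recalled there via \cite[Theorem B]{CST3}). One considers the natural transformation of functors (on the category of open subsets of the underlying space $X$, with the two induced CS structures) given by $U\mapsto H_{*}(\crG_{\ov{p}}U;\cE)\to H_{*}(\crG_{\ov{p}}U^*;\cE)$, and checks the three axioms: (i) it is an isomorphism on a distinguished open set, e.g. a Euclidean chart where both structures are trivial so $\crG_{\ov{p}}U=\crG_{\ov{p}}U^*=\sing U$; (ii) it is compatible with Mayer--Vietoris, which holds by \thmref{thm:MV} applied to both structures; (iii) it behaves correctly on the distinguished neighborhoods $\R^k\times\rc W$ versus their coarsening $\R^m\times\rc L$, as displayed in the diagram \eqref{equa:XandX} of \ref{rec:intrinsic}. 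Since $X$ is a CS set, a standard induction on depth, covering $X$ by such distinguished neighborhoods and an open set of strictly smaller depth, then yields the global isomorphism; the local-coefficient version goes through because \thmref{thm:MV} and the cone computation \propref{prop:cone} are stated with local coefficients.

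The crux is therefore step (iii): comparing $\crG_{\ov{p}}(\R^k\times\rc W)$ with $\crG_{\ov{p}}(\R^m\times\rc L)$ under the homeomorphism-and-stratified-map $h$. Using \propref{prop:homotopyxR} we strip off the Euclidean factors and reduce to comparing $\crG_{\ov{p}}\rc W$ with $\crG_{\ov{p}}\rc L$ (after matching the Euclidean dimensions $k$ and $m$ by absorbing $\R^{m-k}$; note $h^{-1}(\R^m\times\{\tv\})=\R^k\times\rc A$ with $A$ a homology $(m-k-1)$-sphere, and $W=S^{k-1}*A$-type joins appear). By \propref{prop:cone} the relative homology $H_{j}(\crG_{\ov{p}}\rc W,\crG_{\ov{p}}W;\cE)$ vanishes for $j\leq D\ov{p}(\tw)+1$, and similarly for $\rc L$; for a GM-perversity the key point is that the cleaving behavior — whether $D\ov{p}$ vanishes on the relevant codimension — is the same for $W$ inside $X$ and for $L$ inside $X^*$ precisely because no exceptional stratum occurs, so the truncation degrees match. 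One then runs the induction hypothesis on $W$ (resp. $L$), which has smaller depth, together with the two cone formulas to identify the two relative groups, and concludes the square commutes up to isomorphism. The main obstacle I expect is the careful bookkeeping in this local comparison: reconciling the different Euclidean dimensions $k$ and $m$, tracking how the induced perversities on $W$ and on $L$ correspond under $h$, and checking that the no-exceptional-stratum hypothesis really forces the truncation ranges $D\ov{p}(\codim\,\cdot)+1$ to agree on both sides so that the cone formulas glue. Once (iii) is in place, the rest is the now-routine induction of King's method combined with \lemref{lem:quillen}.
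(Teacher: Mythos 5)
Your global scaffolding is essentially the paper's: reduction via \lemref{lem:quillen} to homology with local coefficients, $\pi_{0}$ and $\pi_{1}$ handled by \thmref{thm:thepione}, and a King-style induction on depth using \thmref{thm:MV}, increasing unions and distinguished neighborhoods. The genuine gap is at what you yourself identify as the crux, your step (iii). Your key claim there — that because there is no exceptional stratum ``the truncation degrees match'' on the two sides — is false in general. In the chart comparison $\R^k\times\rc W\to\R^m\times\rc L$ one has $s=\dim W\geq t=\dim L$ and $m\geq k$, so the cone computations truncate the $X$-side at $D\ov{p}(s+1)$ and the $X^*$-side at $D\ov{p}(t+1)$, and these degrees generally differ; the no-exceptional-stratum hypothesis only guarantees that the stratum through $x$ is still singular in $X^*$ (so $L\neq\emptyset$ and both sides are genuinely truncated — this is exactly what fails in \exemref{ex:poincaresphere}), not that the ranges agree. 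Moreover you cannot simply ``strip off the Euclidean factors'' and compare $\crG_{\ov{p}}\rc W$ with $\crG_{\ov{p}}\rc L$: the homeomorphism $h$ of \eqref{equa:XandX} does not respect the product decompositions, and the relevant pair on the $X^*$-side is $(P,Q)$ with $Q=P\menos (B\times\{\tv\})$, where $B\times\{\tv\}$ is the image of $\R^k\times\{\tw\}$ and is a \emph{proper} subset of $\R^m\times\{\tv\}$, so \propref{prop:cone} does not apply to it directly.

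The missing idea is supplied in the paper by \propref{prop:invarianceopensubsetnew}: one excises $Z=B\times(\rc L\menos\{\tv\})$ from the pair $(Q,R)$, with $R=P\menos(\R^m\times\{\tv\})$, and uses $h^{-1}(\R^m\times\{\tv\})=\R^k\times\rc A$ with $A$ a homology sphere of dimension $m-k-1$ to obtain the splitting \eqref{equa:directsum}, in which a second summand appears with a degree shift by $\dim A=m-k-1$. The GM-perversity growth inequality $D\ov{p}(s+1)\leq(s-t)+D\ov{p}(t+1)$ (this, not the cleaving point, is where the GM hypothesis enters) shows that for $i\leq D\ov{p}(s+1)$ the shifted degree lies in the vanishing range of \propref{prop:cone}, whence the short exact sequences \eqref{equa:shortandtop} give $H_{i}(\crG_{\ov{p}}P,\crG_{\ov{p}}Q)=0$ for $i\leq D\ov{p}(s+1)+1$. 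Combined with the hypothesis on $V\menos S$, the five lemma, \lemref{lem:quillenfini}, and the vanishing of both sides above $D\ov{p}(s+1)$ (using $D\ov{p}(s+1)\geq D\ov{p}(t+1)$ and \propref{prop:coneandhomotopy}), this closes the local step. Without this bridging of the mismatched truncation ranges via the homology sphere $A$ and the GM growth condition, your induction has no valid base comparison on the distinguished neighborhoods, so as written the proof does not go through.
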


The proof begins with a local version in a conical chart.

\begin{proposition}\label{prop:invarianceopensubsetnew}
Let $\ov{p}$ be a GM-perversity, $X$ be a  CS set 
without stratum of codimension 1.
We denote by $X^*$  the associated CS set  with the intrinsic filtration.
Let $S$ be a singular stratum of $X$ and $V$ be a conical chart of $x\in S$. 
We suppose that the stratum $S$ is included in a singular stratum of $X^*$
and that there are isomorphisms,
$\pi_{0}^{\ov{p}}\nu\colon \pi_{0}^{\ov{p}}V\cong  \pi_{0}^{\ov{p}}V^*$
and
$\pi_{1}^{\ov{p}}\nu\colon \pi_{1}^{\ov{p}}(V,x_{0}){\cong}  \pi_{1}^{\ov{p}}(V^*,x_{0})$, 
for any regular point $x_{0}$.
If the map
\begin{equation}\label{equa:hypinvariancenew}
\pi_{j}^{\ov{p}}\nu\colon \pi_{j}^{\ov{p}}(V\menos S,x_{0})\xrightarrow{\;\;\cong\;\;} \pi_{j}^{\ov{p}}(V^*\menos S),x_{0}),
\end{equation}
is an isomorphism for any $j$, then the next map is also an isomorphism,
\begin{equation}\label{equa:conclusioninvariancenew}
\pi_{j}^{\ov{p}}\nu\colon \pi_{j}^{\ov{p}}(V,x_{0})\xrightarrow{\;\;\cong\;\;}  \pi_{j}^{\ov{p}}(V^*,x_{0}).
\end{equation}
\end{proposition}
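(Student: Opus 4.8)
The strategy is to write $V\cong_{s}\R^m\times \rc W$ and $V^*\cong_{s}\R^m\times \rc L$ via the diagram \eqref{equa:XandX} of Recall~\ref{rec:intrinsic}, so that $\nu|_{V}$ is modelled by the map $h\colon \R^m\times\rc W\to \R^m\times \rc L$ (I keep $m=k$ since by hypothesis $S$ is a source stratum of its image in $X^*$, hence has the same dimension). Using \propref{prop:homotopyxR} to strip off the $\R^m$ factor, it suffices to compare $\crG_{\ov p}\rc W$ with $\crG_{\ov p}\rc L$ through the induced map. The idea is to detect the weak equivalence with \lemref{lem:quillen}: one must check that $\nu$ induces isomorphisms on $\pi_{0}$, on $\pi_{1}$ (both furnished by the hypotheses of the proposition, combined with \defref{def:backperversity} and \remref{rem:perverselink} to match up the induced perversities), and an isomorphism $H_{j}(\crG_{\ov p}V^*;\cE)\cong H_{j}(\crG_{\ov p}V;\nu^*\cE)$ for every local coefficient system $\cE$ and every $j$.

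First I would set up the homology comparison by a relative argument. By \propref{prop:cone} applied to the apexes $\tw$ (in $W$) and $\tv$ (in $L$), the inclusions give
$$H_{j}(\crG_{\ov p}\rc W,\crG_{\ov p}W;\cE)=0=H_{j}(\crG_{\ov p}\rc L,\crG_{\ov p}L;\cF)=0$$
for all $j\leq D\ov p(\tw)+1=D\ov p(\tv)+1$. (Here I use that the perversities induced on $W$ and $L$ take the same value on the relevant strata, which follows from $\ov p$ being codimensional and the relation between codimensions in $X$ and $X^*$ read off from diagram \eqref{equa:XandX}: $\codim$ in $X$ equals $\codim$ in $X^*$ for any singular stratum that stays singular.) Below the cleaving point $\ell_{\ov p}$ (\defref{def:clivage}), i.e. in degrees $\leq D\ov p(\tv)+1$, this reduces the comparison of $\crG_{\ov p}\rc W$ with $\crG_{\ov p}\rc L$ to the comparison of $\crG_{\ov p}W$ with $\crG_{\ov p}L$, which is where the inductive setup over depth of the ambient theorem feeds in — but at the level of this proposition it is packaged in hypothesis \eqref{equa:hypinvariancenew}: indeed $V\menos S\cong_{s}\R^m\times\rc W\menos(\R^m\times\{\tw\})\simeq_{s}\R^m\times W\times\,]0,1[$, so $\pi_j^{\ov p}(V\menos S)\cong\pi_j^{\ov p}(W)$ and likewise $\pi_j^{\ov p}(V^*\menos S)\cong\pi_j^{\ov p}(L)$ by \corref{cor:gajerhomotopy} and \propref{prop:homotopyxR}. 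So \eqref{equa:hypinvariancenew} says precisely that $\nu$ induces $\pi_j^{\ov p}(W)\cong\pi_j^{\ov p}(L)$ for all $j$.

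Then the remaining task is the range of degrees $j\geq D\ov p(\tv)+2$, i.e. above the cleaving point. Here I would argue as in \propref{prop:coneandhomotopy}: for such $j$, $\pi_{j}^{\ov p}(\rc W)=0$ and $\pi_{j}^{\ov p}(\rc L)=0$, because a $\ov p$-full simplex of dimension $\geq D\ov p(\tv)+1$ with boundary at the basepoint can be coned off to the apex while staying $\ov p$-full (the allowability computations are exactly those carried out in the proof of \propref{prop:coneandhomotopy}, verbatim, for both the apex stratum and the product strata $T\times\,]0,1[$). Combining: $\nu$ induces $\pi_j^{\ov p}(\rc W)\cong\pi_j^{\ov p}(\rc L)$ for $j\leq D\ov p(\tv)$ (from the $W$–$L$ comparison, \eqref{equa:hypinvariancenew}, via \propref{prop:coneandhomotopy}), $\pi_0$ and $\pi_1$ isomorphisms are hypotheses, and both sides vanish for $j\geq D\ov p(\tv)+1$; so $\nu$ is a weak equivalence $\crG_{\ov p}V\to\crG_{\ov p}V^*$, whence \eqref{equa:conclusioninvariancenew}. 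Reinstating the $\R^m$ factor with \propref{prop:homotopyxR} (its contribution to $\pi_*$ and to $H_*$ with local coefficients is only through $\sing\R^m\simeq *$) completes the argument.

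\textbf{Main obstacle.} The delicate point is the bookkeeping of perversities and codimensions across the square \eqref{equa:XandX}: one must be certain that the perversity $\ov p$ induces the \emph{same} value on the apex of $\rc W$ and on the apex of $\rc L$, and more generally on matched strata of $W$ and $L$, so that the cone formula \propref{prop:cone} and the Postnikov-truncation behaviour \propref{prop:coneandhomotopy} can be invoked on both sides with a common cleaving point. This works precisely because $\ov p$ is codimensional and $S$ stays singular with its codimension unchanged (no exceptional stratum, source stratum of the same dimension), but it has to be verified carefully stratum by stratum for the links $W$ and $L$. A secondary technical nuisance is handling the general local coefficient system $\cE$ rather than constant coefficients in the relative vanishing statement, but this is already built into \propref{prop:cone} and requires no new idea.
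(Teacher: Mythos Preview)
There is a genuine gap: you assume $m=k$ (``I keep $m=k$ since by hypothesis $S$ is a source stratum of its image in $X^*$''), but this is not what the proposition assumes. The hypothesis is only that $S$ is \emph{included} in a singular stratum of $X^*$, not that it has the same dimension (see \defref{def:source}). In the notation of Recall~\ref{rec:intrinsic} one has $V\cong \R^k\times\rc W$ and $V^*\cong \R^m\times\rc L$ with $m\geq k$ in general, and the image of $S\cap V=\R^k\times\{\tw\}$ under $h$ is only a closed subset $B\times\{\tv\}\subset \R^m\times\{\tv\}$, not the entire apex set. The whole subtlety of the proposition, and the reason it is needed for the inductive step $P(\ell)\Rightarrow R(\ell)$ in the proof of \thmref{thm:homotopyinvariance}, is precisely the case $m>k$.

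This breaks two of your key steps. First, when $m>k$ you cannot identify $V^*\menos S$ with $\R^m\times L\times\,]0,1[$: the complement $(\R^m\times\rc L)\menos(B\times\{\tv\})$ is not stratified homotopy equivalent to $\R^m\times(\rc L\menos\{\tv\})$. Second, your claim $D\ov p(\tw)=D\ov p(\tv)$ fails: with $s=\dim W$ and $t=\dim L$ one has $\codim\{\tw\}=s+1$ and $\codim\{\tv\}=t+1$, and $s>t$ when $m>k$, so the two cut-offs in \propref{prop:coneandhomotopy} do not agree. The paper's proof confronts both issues: it introduces the intermediate space $R=(\R^m\times\rc L)\menos(\R^m\times\{\tv\})$, uses excision (\thmref{thm:excision}) and the fact that $h^{-1}(\R^m\times\{\tv\})=\R^k\times\rc A$ with $A$ a homology $(m-k-1)$-sphere to compute $H_{*}(\crG_{\ov p}Q,\crG_{\ov p}R)$ as a direct sum \eqref{equa:directsum}, and then invokes the GM-perversity inequality $D\ov p(s+1)\leq (s-t)+D\ov p(t+1)$ to show the extra summand vanishes in the relevant range. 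Your argument is essentially correct in the special case $m=k$ (where $A=\emptyset$ and $B=\R^m$), but that case is not the proposition.
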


\begin{proof}
We can suppose  that $V\cong \R^k\times \rc W$ is a conical chart, with $\tw$ the apex of the cone $\rc W$. 
Thus, there is a homeomorphism $S\cap V\cong \R^k\times \{\tw\}$.
We take  the description  given in Recall~\ref{rec:intrinsic}.
We have $V^*\cong\R^m\times \rc L$ with $\tv$ the apex of the cone $\rc L$ and $m\geq k$. 
If we denote by $B\times \{\tv\}$ the image of $\R^k\times \{\tw\}$ under the previous homeomorphism, we also have $(V^*\menos S)\cong\R^m\times \rc L\menos (B\times \{\tv\})$.
We set $s=\dim W$ and $t=\dim L$; they verify $s\geq t$ since $s+k=m+t$.
The hypothesis on the stratum $S$  means $s\geq 1$ and $t\geq 1$.
For the rest of the proof, we shorten the notation as follows,
$$
\begin{array}{lll}
 V^*\cong P=\R^m\times \rc L, &  V^*\menos S\cong Q=(\R^m\times \rc L)\menos B\times \{\tv\}, &   R=(\R^m\times \rc L)\menos (\R^m\times\{\tv\}),\\
  V\cong P'=\R^k\times \rc W, &  V\menos S\cong Q'=(\R^k\times \rc W)\menos (\R^k\times \{\tw\}). &
\end{array}
$$
By using Lemmas~\ref{lem:quillen}, \ref{lem:quillenfini} and the hypotheses on the perverse fundamental groups,
the existence of the isomorphisms \eqref{equa:hypinvariancenew} and \eqref{equa:conclusioninvariancenew}
between perverse homotopy groups is equivalent to the existence of isomorphisms in homology with local coefficients.
Let us consider a local coefficient system, $\cE$, on $\crG_{\ov{p}}(P)$. 
We abuse notation and still denote by $\cE$ the local coefficient systems on 
$\crG_{\ov{p}}(P')$, $\crG_{\ov{p}}(Q)$, $\crG_{\ov{p}}(Q')$, $\crG_{\ov{p}}(R)$,
obtained by pullback along $\nu$ or by induction on a simplicial subset.
Under theses conditions, we know there exist homology exact sequences of pairs (or triples) and excisions (\thmref{thm:excision}).
If $D\ov{p}(s+1)=0$, as $\crG_{\ov{p}}(V)$ and $\crG_{\ov{p}}(V^*)$ are simply connected, we  choose 
the constant system with coefficients in $\Z$.
We do not mention explicitly the coefficient $\cE$ or $\Z$ in the rest of the proof.
We proceed in two steps.

\smallskip\noindent
$\bullet$  Let $i\leq D\ov{p}(s+1)$. 
We set $Z=B\times (\rc L\menos\{\tv\})$.
As $Z\subset R\subset Q$ and the closure of $Z$ being included in the interior  of $Q$, we can use an excision (\thmref{thm:excision})
and get
$$H_{i}(\crG_{\ov{p}}Q,\crG_{\ov{p}}R)\cong H_{i}(\crG_{\ov{p}}(Q\menos Z),\crG_{\ov{p}}(R\menos Z)).$$
Taking their image by the restriction (see \eqref{equa:XandX}) of $h^{-1}$  gives stratified homeomorphisms
$Q\menos Z\cong ((\R^k\times \rc A)\menos (\R^k\times \{\tw\}))\times \rc L$
and
$R\menos Z\cong ((\R^k\times \rc A)\menos (\R^k\times \{\tw\}))\times (\rc L\menos\{\tv\})$,
where $\tw$ is the apex of $\rc A$.
By using \propref{prop:homotopyxR} and the fact that $A$ is a homology sphere of dimension $m-1-k$, we deduce
for \emph{any} $i$,
\begin{eqnarray}
H_{i}(\crG_{\ov{p}}Q,\crG_{\ov{p}}R)
&\cong &
H_{i}(\crG_{\ov{p}}(A\times \rc L),\crG_{\ov{p}}(A\times (\rc L\menos \{\tv\})))\nonumber \\
&\cong&
H_{i}(\sing\, A\times \crG_{\ov{p}}(\rc L),\sing \,A\times \crG_{\ov{p}}(\rc L\menos \{\tv\}))\nonumber \\
&\cong&
H_{i}(\crG_{\ov{p}}(\rc L),\crG_{\ov{p}}(\rc L\menos \{\tv\}))\oplus
H_{i-m+1+k} (\crG_{\ov{p}}(\rc L),\crG_{\ov{p}}(\rc L\menos \{\tv\}))\nonumber\\
&\cong&
H_{i}(\crG_{\ov{p}}P,\crG_{\ov{p}}R)\oplus
H_{i-m+1+k} (\crG_{\ov{p}}(\rc L),\crG_{\ov{p}}(\rc L\menos \{\tv\})).\label{equa:directsum}
\end{eqnarray}
In particular, this isomorphism implies that the canonical injection induces a surjective map
$H_{i}(\crG_{\ov{p}}Q,\crG_{\ov{p}}R) \to H_{i}(\crG_{\ov{p}}P,\crG_{\ov{p}}R)$. 
By incorporating this information in the homology long exact sequence of the triple $(\crG_{\ov{p}}P,\crG_{\ov{p}}Q,\crG_{\ov{p}}R)$,
we obtain short exact sequences, for \emph{any} $i$,
\begin{equation}\label{equa:shortandtop}
0
\to
H_{i+1}(\crG_{\ov{p}}P, \crG_{\ov{p}}Q)
\to
H_{i}(\crG_{\ov{p}}Q,\crG_{\ov{p}}R)
\to
H_{i}(\crG_{\ov{p}}P,\crG_{\ov{p}}R)
\to
0.
\end{equation}
Recall $t\leq s$.
As $D\ov{p}$ is a GM-perversity, we have the inequality $D\ov{p}(s+1)
\leq (s-t)+D\ov{p}(t+1)$.
By using it and $i\leq D\ov{p}(s+1)$, we get
$i-m+1+k\leq D\ov{p}(s+1)+t-s+1\leq D\ov{p}(t+1) +1$.
Thus, from \propref{prop:cone}, the isomorphism \eqref{equa:directsum} becomes
$$H_{i}(\crG_{\ov{p}}Q,\crG_{\ov{p}}R)
\cong
 H_{i}(\crG_{\ov{p}}P,\crG_{\ov{p}}R)\quad \text{for} \quad  i\leq D\ov{p}(s+1).$$
From \eqref{equa:shortandtop}, we deduce
\begin{equation}\label{equa:pq}
H_{i}(\crG_{\ov{p}}P,\crG_{\ov{p}}Q)=0\quad \text{for} \quad i\leq D\ov{p}(s+1)+1.
\end{equation}
Let us write $h_{j}$, for $j=1,\,2,\,3$, the homomorphisms induced by the homeomorphism $h$ of the diagram 
\eqref{equa:XandX}, between the homology long exact sequences of
the pairs $(\crG_{\ov{p}}P,\crG_{\ov{p}}Q)$ and $(\crG_{\ov{p}}P',\crG_{\ov{p}}R')$,
$$\xymatrix{
\dots\ar[r]
&
H_{i+1}(\crG_{\ov{p}}P',\crG_{\ov{p}}Q')\ar[r]\ar[d]^-{(h_{3})_{i+1}}
&
H_{i}(\crG_{\ov{p}}Q')\ar[r]\ar[d]^-{(h_{1})_{i}}
&
H_{i}(\crG_{\ov{p}}P')\ar[r]\ar[d]^-{(h_{2})_{i}}
&
H_{i}(\crG_{\ov{p}}P',\crG_{\ov{p}}Q')\ar[d]^-{(h_{3})_{i}}\ar[r]
&\dots \\
\dots\ar[r]
&
H_{i+1}(\crG_{\ov{p}}P,\crG_{\ov{p}}Q)\ar[r]
&
H_{i}(\crG_{\ov{p}}Q)\ar[r]
&
H_{i}(\crG_{\ov{p}}P)\ar[r]
&
H_{i}(\crG_{\ov{p}}P,\crG_{\ov{p}}Q)\ar[r]
& \dots
}$$
By hypothesis, the map $(h_{1})_{i}$ is an isomorphism for any $i$.
For $i\leq D\ov{p}(s+1)+1$, we know that $H_{i}(\crG_{\ov{p}}P',\crG_{\ov{p}}Q')=0$ from \propref{prop:cone} and
$H_{i}(\crG_{\ov{p}}P,\crG_{\ov{p}}Q)=0$ from \eqref{equa:pq}. This implies, with the five lemma, that
$(h_{2})_{i}$ is an isomorphism for any $i\leq D\ov{p}(s+1)$ and a surjection for $i=D\ov{p}(s+1)+1$.
We can apply \lemref{lem:quillenfini} to $\crG_{\ov{p}}P'\to \crG_{\ov{p}}P$ and deduce
$$\pi_{i}(\crG_{\ov{p}}P',v)\cong \pi_{i}(\crG_{\ov{p}}P,h(v))
\; \text{for} \; i\leq D\ov{p}(s+1)
\; \text{and each basepoint}\,v.$$

\smallskip\noindent
$\bullet$ From $D\ov{p}(s+1)+1\geq D\ov{p}(t+1)+1$,
 Propositions~\ref{prop:homotopyxR} and \ref{prop:coneandhomotopy} imply
 $\pi_{i}(\crG_{\ov{p}}P',v)=\pi_{i}(\crG_{\ov{p}}P,v)=0$,
for all $i\geq D\ov{p}(s+1)+1$ and each basepoint $v$.

Combining the two cases, we get the expected result \eqref{equa:conclusioninvariancenew}.
\end{proof}
%
\begin{proof}[Proof of \thmref{thm:homotopyinvariance}]
 All the CS sets that appear in the rest of the proof are supposed to be without exceptional stratum.
From \thmref{thm:thepione}, we know that $\nu$ induces an isomorphism between the fundamental groups,
$\pi_{1}^{\ov{p}}(X,x_{0})\cong \pi_{1}^{\ov{p}}(X^*,x_{0})$ for any regular point $x_{0}\in X$. From \remref{exam:pi0} and the hypothesis, we have
$\pi_{0}^{\ov{p}}(X)=\pi_{0}(X\menos\Sigma_{X})\cong \pi_{0}(X\menos\Sigma_{X^*})\cong \pi_{0}^{\ov{p}}(X^*)$.
Let $U$ be an open subset of $X$, as the regular parts of $U$ and $U^*$ coincide, 
we also get $\pi_{0}^{\ov{p}}(U)\cong \pi_{0}^{\ov{p}}(U^*)$ and
$\pi_{1}^{\ov{p}}(U,x_{0})\cong \pi_{1}^{\ov{p}}(U^*,x_{0})$ for any regular point $x_{0}\in U$.

We  repeat an argument of  King in \cite[Section 3]{MR800845}
(see also \cite[Theorem 5.5.1]{FriedmanBook}) using
an induction on the depth of $X$. The result is obviously true for CS sets of depth 0.
We say that the CS set $X$ has the property $\cW$ if the map
$\pi_{j}^{\ov{p}}\nu\colon\pi_{j}^{\ov{p}}(X,x_{0})\to \pi_{j}^{\ov{p}}(X^*,x_{0})$
is an isomorphism for any $j$ and any regular point $x_{0}$. 
We consider the following properties.
\begin{itemize}
\item[$P(\ell)$:] The CS sets of depth $\leq \ell$ have property $\cW$.
\item[$Q(\ell)$:] The CS sets of the form $M \times \rc W$, with $M$ a trivially filtered manifold 
and $W$ a compact filtered space of  depth $\leq \ell$,  have property $\cW$.
\item[$R(\ell)$:] The CS sets of the form $\R^k \times \rc W$, with $\R^k$  trivially filtered  
and $W$ a compact filtered space of  depth $\leq \ell$,  have property $\cW$.
\end{itemize}
We will show,
$P(\ell) \Rightarrow R(\ell)$,
$R(\ell) \Rightarrow Q(\ell)$
and 
$P(\ell) \wedge Q(\ell) \Rightarrow P(\ell+1)$,
which gives the proof.

\medskip
$R(\ell) \Rightarrow Q(\ell)$. 
We know (\cite[Lemma 2]{MR800845})  that there is a coarsening $Z$ of $\rc W$ such that 
$(M\times \rc W)^*\cong M\times Z$ and $(\R^k\times \rc W)^*\cong \R^k\times Z$.
From \propref{prop:homotopyxR}, 
we deduce that $\crG_{\ov{p}}((\R^k\times \rc W)^*)$ is of the homotopy type of $\crG_{\ov{p}}Z$
and that $\crG_{\ov{p}}(\R^k\times \rc W)$ is of the homotopy type of $\crG_{\ov{p}}(\rc W)$.
The conclusion comes from a new application of \propref{prop:homotopyxR} to
$M\times \rc W$ and $M\times Z$.

\medskip
$P(\ell) \Rightarrow R(\ell)$. This is \propref{prop:invarianceopensubsetnew}.

\medskip
$P(\ell) \wedge Q(\ell) \Rightarrow P(\ell+1)$. 
If $U_{i}$ is an increasing sequence of open subsets of $X$ such that 
$\pi_{*}^{\ov{p}}(U_{i})\cong \pi_{*}^{\ov{p}}(U_{i}^*)$, for any $i$,
then the classic argument of compactness (see \cite[Proposition 15.9]{MR0402714}) gives an isomorphism
$\pi_{*}^{\ov{p}}(\cup_{i}U_{i})\cong \pi_{*}^{\ov{p}}((\cup_{i}U_{i})^*)$.
With the properties on fundamental perverse groups and connected components recalled at the beginning of this proof, 
we can use \lemref{lem:quillen} and are reduced to prove that
$\crG_{\ov{p}}\nu\colon \crG_{\ov{p}}X\to \crG_{\ov{p}}X^*$ induces an isomorphism in
homology for any local coefficient system, $\cE$, on $\crG_{\ov{p}}X^*$.  
As we have established a Mayer-Vietoris exact sequence with coefficients in $\cE$ in \thmref{thm:MV}, the presentation 
using Zorn's Lemma, made by G.Friedman in \cite[Page 257]{FriedmanBook},
 can be reproduced verbatim here.
\end{proof}

\section{Intersection Hurewicz theorem}\label{sec:hur}
Let $(X,\ov{p})$ be a perverse space and $x_{0}\in X$ be a regular point.
Any $\ov{p}$-full simplex of $X$ being a chain of $\ov{p}$-intersection, we have a canonical map,
$\cJ^{\ov{p}}_{i}\colon H_{i}(\crG_{\ov{p}}X;\Z)\to H_{i}^{\ov{p}}(X;\Z)$,
induced by the inclusion of chain complexes, $C_{*}(\crG_{\ov{p}}X;\Z)\to C_{*}^{\ov{p}}(X;\Z)$.
By definition, we call \emph{$\ov{p}$-intersection Hurewicz homomorphism}, the composition
\begin{equation}\label{equa:defhur}
h^{\ov{p}}_{*}\colon \pi^{\ov{p}}_{*}(X,x_{0})\to \tilde{H}_{*}^{\ov{p}}(X;\Z)
\end{equation}
of $\cJ^{\ov{p}}_{*}$ with the Hurewicz homomorphism,  
$h_{*,\crG_{\ov{p}}X}\colon \pi_{*}(\crG_{\ov{p}}X,x_{0})\to \tilde{H}_{*}(\crG_{\ov{p}}X;\Z)$,
of $\crG_{\ov{p}}X$.
This section is devoted to the proof of the following result.

\begin{theoremb}\label{thm:hurintersection}
Let $(X,\ov p)$ be a $\ov{p}$-connected perverse CS set. 
Then the following properties are verified.
\begin{enumerate}[1)]
\item The intersection Hurewicz homomorphism, 
$h_{1}^{\ov{p}}\colon \pi_{1}^{\ov{p}}(X,x_{0})\to \widetilde{H}_{1}^{\ov{p}}(X;\Z)$, 
is isomorphic to the abelianisation $\pi_{1}^{\ov{p}}(X,x_{0})\to \pi_{1}^{\ov{p}}(X,x_{0})^{\ab}$,
 for any regular point $x_{0}$ of $X$.
 \item Let $k\geq 2$. We suppose  
$\pi_{j}^{\ov{p}}(X)=\pi_{j}^{\ov p}(L)=0$ for every link  $L$ of $X$, and each $j\leq k-1$.
Then, the intersection Hurewicz homomorphism
$h_{j}^{\ov{p}}\colon \pi_{j}^{\ov{p}}(X,x_{0})\to \widetilde{H}_{j}^{\ov{p}}(X;\Z)$
 is an isomorphism for $j\leq k$ and a surjection for $j=k+1$.
\end{enumerate}
\end{theoremb}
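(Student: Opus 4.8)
The plan is to reduce the $\ov p$-intersection Hurewicz theorem to the classical Hurewicz theorem applied to the Kan simplicial set $\crG_{\ov p}X$, together with the identification in low degrees of $H_*(\crG_{\ov p}X;\Z)$ with $H_*^{\ov p}(X;\Z)$. The map $h_j^{\ov p}$ is by definition the composition of the classical Hurewicz map $h_{j,\crG_{\ov p}X}\colon \pi_j(\crG_{\ov p}X,x_0)\to \widetilde H_j(\crG_{\ov p}X;\Z)$ with the map $\cJ_j^{\ov p}\colon H_j(\crG_{\ov p}X;\Z)\to H_j^{\ov p}(X;\Z)$ induced by the inclusion of chain complexes $C_*(\crG_{\ov p}X;\Z)\hookrightarrow C_*^{\ov p}(X;\Z)$. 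So there are two independent ingredients: (a) under the connectivity hypotheses, $h_{j,\crG_{\ov p}X}$ is an isomorphism for $j\le k$ and a surjection for $j=k+1$; and (b) $\cJ_j^{\ov p}$ is an isomorphism in the relevant range. Ingredient (a) for part 1) is just the classical fact that $H_1$ of a connected Kan complex is the abelianisation of $\pi_1$; for part 2), since $\pi_j^{\ov p}(X)=\pi_j(\crG_{\ov p}X)=0$ for $j\le k-1$, the classical Hurewicz theorem gives that $h_{j,\crG_{\ov p}X}$ is an isomorphism for $j\le k$ and onto for $j=k+1$.

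The real work is ingredient (b): showing that the inclusion $C_*(\crG_{\ov p}X;\Z)\hookrightarrow C_*^{\ov p}(X;\Z)$ induces an isomorphism on $H_j$ for $j\le k$ and a surjection on $H_{k+1}$, under the hypothesis that $\pi_j^{\ov p}(X)=\pi_j^{\ov p}(L)=0$ for every link $L$ and all $j\le k-1$. I would prove this by induction on the depth of $X$, using the CS set structure, exactly parallelling the Mayer–Vietoris/colimit machinery already set up in Sections 2 and 3. The local model is a conical chart $V\cong \R^m\times\rc L$: here $\crG_{\ov p}V\simeq \crG_{\ov p}(\rc L)$ by \propref{prop:homotopyxR}, and \propref{prop:coneandhomotopy} computes its homotopy (a Postnikov truncation of $\sing L$, roughly), while the $\ov p$-intersection homology of $\rc L$ satisfies the cone formula $H_j^{\ov p}(\rc L)\cong H_j^{\ov p}(L)$ for $j\le D\ov p(\tv)$ and vanishes above. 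The key point is that, for $j$ in the range controlled by the connectivity hypothesis on the link, both $H_j(\crG_{\ov p}(\rc L))$ and $H_j^{\ov p}(\rc L)$ either vanish or agree, by the inductive hypothesis applied to $L$ (which has smaller depth) combined with the relation between $\crG_{\ov p}$ of a cone and the Postnikov tower, and the vanishing of $\pi_j^{\ov p}(L)$ in low degrees forces the first few intersection homology groups of $L$ to vanish as well (via the induction hypothesis for $L$, i.e. the depth-$<\depth X$ case of the theorem). One then compares the two Mayer–Vietoris spectral sequences (or, more elementarily, runs the double induction on depth and number of minimal strata exactly as in the proof of \thmref{thm:homotopyinvariance} and \corref{cor:thommather}), gluing the local isomorphisms along open covers of the form $\{E, X\menos S\}$ where $E$ is a distinguished neighbourhood of a minimal stratum $S$; the theorem of $\cU$-small chains (\propref{prop:Usmall}, which holds for both $C_*(\crG_{\ov p}-)$ and $C_*^{\ov p}(-)$) and the colimit argument with Zorn's lemma (\cite[Page 257]{FriedmanBook}, already invoked above) handle the passage from the local statement to the global one.

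For part 1) there is a small additional subtlety: one must check that $\cJ_1^{\ov p}\colon H_1(\crG_{\ov p}X;\Z)\to H_1^{\ov p}(X;\Z)$ is an isomorphism with no connectivity hypothesis beyond $\ov p$-connectedness. This follows because in degrees $0$ and $1$ a $\ov p$-full chain and a $\ov p$-intersection chain impose essentially the same condition once one accounts for the fact that a $1$-chain whose boundary is $\ov p$-allowable can, after subdivision and the $\cU$-small chains argument, be replaced by one all of whose faces are $\ov p$-allowable — here one uses that $1$-simplices and $0$-simplices already lie in the regular part when $\ov p\le\ov t$, and for general GM-type perversities the low-degree part of the two complexes agree up to the same subdivision trick as in \propref{prop:Usmall}; alternatively one invokes that both compute the same $H_1$ because the abelianisation of $\pi_1^{\ov p}$ is computed by either complex. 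I expect the main obstacle to be ingredient (b) at the inductive step: precisely, controlling $H_{k+1}$ (surjectivity only) while getting isomorphisms below, since the cone formula for intersection homology kills homology above $D\ov p(\tv)$ whereas $H_*(\crG_{\ov p}(\rc L))$ need not vanish there (this is exactly the discrepancy illustrated in \exemref{exam:gajernotintersection} and the discussion after \propref{prop:cone}) — so one must check carefully that the connectivity hypotheses $\pi_j^{\ov p}(L)=0$ for $j\le k-1$ push the range where the two theories agree far enough to cover $j\le k+1$ in the relative terms of the Mayer–Vietoris comparison, and that the ``bad'' part of $H_*(\crG_{\ov p}(\rc L))$ only appears in degrees $> k+1$. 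The example \exemref{exem:pashurewicz} referenced in the introduction is presumably the obstruction showing this hypothesis on the links cannot be dropped.
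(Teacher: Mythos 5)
Your overall architecture is the same as the paper's: factor $h^{\ov p}_*$ through the classical Hurewicz map of the Kan complex $\crG_{\ov p}X$ and reduce everything to comparing $H_*(\crG_{\ov p}X;\Z)$ with $H_*^{\ov p}(X;\Z)$; and for part 2) your plan (induction on depth, the conical chart/cone formula as local model, Mayer--Vietoris for both theories, compact-supports colimits and the Zorn argument) is essentially the paper's proof of \thmref{InterHomolGajer}, organized there around a property $P_k$ and a cone lemma (\propref{prop:LtocL}) whose key diagram compares $\cJ^{\ov p}_{k+1}$ for $\rc L$ and $L$ via the classical Hurewicz map of $\crG_{\ov p}L$. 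So ingredient (a) and ingredient (b) in degrees $\geq 2$ are fine in outline.

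The genuine gap is in part 1), i.e.\ ingredient (b) in degree 1, which the paper isolates as \thmref{InterHomolGajer01} and explicitly flags as the difficult point: the \emph{injectivity} of $\cJ_1^{\ov p}$ for an arbitrary perversity. Your subdivision remark only addresses $0$- and $1$-chains (surjectivity of $\cJ_0,\cJ_1$), and your fallback --- ``both compute the same $H_1$ because the abelianisation of $\pi_1^{\ov p}$ is computed by either complex'' --- is circular, since the claim that $C_*^{\ov p}(X)$ computes that abelianisation in degree 1 is precisely what part 1) asserts. Concretely, given $\eta\in C_2^{\ov p}(X)$ with $\partial\eta\in C_1(\crG_{\ov p}X)$, one must produce $\alpha\in C_2(\crG_{\ov p}X)$ with $\partial\alpha=\partial\eta$; the obstruction is that the individual $2$-simplexes of $\eta$ may have non-allowable $1$-faces and vertices (these only cancel in the boundary), and no amount of subdivision alone makes them allowable. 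The paper's proof needs a pseudobarycentric subdivision followed by a re-triangulation that cancels the bad $1$-faces pairwise, then an assembly of the remaining simplexes into chains over a polyhedron $K_\beta$ with a single bad vertex, which is moved to a new vertex $a'_0$ chosen by a density and general-position argument (avoiding the finite set $D$ and meeting the $1$-polyhedron $P$ in finitely many points) so that the re-triangulated chain is $\ov p$-full with the same boundary; an auxiliary concatenation lemma (\lemref{*tau}) is also needed for the degree-1 statements. None of this is present in your proposal, and your appeal to the case $\ov p\leq\ov t$ does not cover the statement, which is proved for a general perversity. Part 2) as you set it up also silently uses this degree-$\leq 1$ comparison as the base of the induction, so the gap propagates there.
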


We already know that the $\ov{p}$-intersection homology of a filtered space, $X$, 
can be different from  the homology of $\crG_{\ov{p}}X$, see \exemref{exam:gajernotintersection}.
Taking in account the (classic) Hurewicz  theorem of $\crG_{\ov{p}}X$, \thmref{thm:hurintersection} above is proven 
if we establish 
a convenient relationship between the homologies  $H_{*}(\crG_{\ov{p}}X;\Z)$ and $H_{*}^{\ov{p}}(X;\Z)$.
As the proof requires different techniques, we split it into two parts. 
Let us begin with the two first homologies.

\begin{theoremb}\label{InterHomolGajer01}
Let $(X,\ov p)$ be a   perverse space.  The  inclusion of  chain complexes induces
isomorphisms in homology,  in degrees~0 and~1, 
\begin{enumerate}[(a)] 
\item $
\cJ_{0}^{\ov{p}} \colon H_0 (\crG_{\ov{p}}X;\Z)\stackrel{\cong}{\longrightarrow} H_0^{\ov p}(X;\Z),
$
\item $
\cJ_{1}^{\ov{p}} \colon H_1 (\crG_{\ov{p}}X;\Z)\stackrel{\cong}{\longrightarrow} H_1^{\ov p}(X;\Z).
$
\end{enumerate}
\end{theoremb}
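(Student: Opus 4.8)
The plan is to compare the two chain complexes directly in degrees $0,1,2$ by a \emph{rerouting} (concatenation) procedure that replaces $\ov{p}$-intersection chains by $\ov{p}$-full ones without altering the boundaries that matter. For part (a), since a $0$-simplex has no proper face, $C_0(\crG_{\ov{p}}X)=C_0^{\ov{p}}(X)$ is, on both sides, the free $R$-module on the $\ov{p}$-allowable $0$-simplexes; so $\cJ_0^{\ov{p}}$ is onto, and its injectivity is equivalent to the equality $\partial C_1^{\ov{p}}(X)=\partial C_1(\crG_{\ov{p}}X)$, of which only $\supseteq$ is at stake. The elementary point is that if $\sigma\colon\Delta^1\to X$ is $\ov{p}$-allowable then every singular stratum $S$ meeting $\sigma$ has $D\ov{p}(S)\le -1$, and a non-$\ov{p}$-allowable vertex of $\sigma$ lies in a stratum with $D\ov{p}(S)=-1$, where $\sigma^{-1}S$ is then finite. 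Given $\xi=\sum a_i\sigma_i\in C_1^{\ov{p}}(X)$ I would discard the $\sigma_i$ that are loops based at a non-$\ov{p}$-allowable point (this does not change $\partial\xi$); since the coefficient of any non-$\ov{p}$-allowable point in $\partial\xi$ vanishes, the $1$-simplexes incident to such a point match in head-to-tail pairs, and replacing each pair $(\sigma_{\mathrm{in}},\sigma_{\mathrm{out}})$ by its concatenation gives a $\ov{p}$-allowable $1$-simplex (its preimage of the stratum only gains the midpoint) with the same boundary and no longer having that point as a vertex. The set of non-$\ov{p}$-allowable vertices strictly shrinks, so iterating produces $\eta\in C_1(\crG_{\ov{p}}X)$ with $\partial\eta=\partial\xi$.

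For part (b), surjectivity: the same rerouting applied to a $\ov{p}$-intersection $1$-cycle $\xi$ (the flow now balancing at every point) yields a $\ov{p}$-full $1$-cycle $\eta$, and each elementary move changes the chain by the boundary of the standard ``concatenation'' $2$-simplex $\Phi$ --- the affine fold $\Delta^2\to\Delta^1$ onto the long edge followed by that concatenation --- which lies in $C_2^{\ov{p}}(X)$ since $\Phi^{-1}S$ is a union of finitely many segments, of dimension $\le 1=-D\ov{p}(S)$ for the relevant strata and empty otherwise. Hence $[\xi]=[\eta]$ in $H_1^{\ov{p}}(X)$ and $\cJ_1^{\ov{p}}$ is onto. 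Equivalently, this computation shows $C_1^{\ov{p}}(X)=C_1(\crG_{\ov{p}}X)+\partial C_2^{\ov{p}}(X)$, i.e. that $H_1$ of the quotient complex $\mathcal{Q}_*=C_*^{\ov{p}}(X)/C_*(\crG_{\ov{p}}X)$ vanishes, and surjectivity then follows from (a) and the long exact homology sequence of $0\to C_*(\crG_{\ov{p}}X)\to C_*^{\ov{p}}(X)\to\mathcal{Q}_*\to 0$.

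For part (b), injectivity: in that long exact sequence, since $H_1(\mathcal{Q}_*)=0$ and $\cJ_0^{\ov{p}}$ is an isomorphism, injectivity of $\cJ_1^{\ov{p}}$ amounts to the vanishing of the connecting map $H_2(\mathcal{Q}_*)\to H_1(\crG_{\ov{p}}X)$, that is, to the statement: if $\zeta\in C_2^{\ov{p}}(X)$ has $\partial\zeta$ a $\ov{p}$-full $1$-chain, then $\partial\zeta=\partial\zeta'$ for some $\zeta'\in C_2(\crG_{\ov{p}}X)$. Writing $\zeta=\sum a_i\tau_i$, the fact that $\partial\zeta$ is $\ov{p}$-full forces every non-$\ov{p}$-full face of the $\tau_i$ --- failing either because the trace of some $\tau_i^{-1}S$ on that edge violates the bound required of a $\ov{p}$-allowable $1$-simplex, or because it carries a vertex in a stratum with $D\ov{p}(S)=-1$ --- to occur in cancelling families, hence (after splitting coefficients) in cancelling pairs $\partial_{j_1}\tau_1=\partial_{j_2}\tau_2$. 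For such a pair I would glue $\tau_1$ and $\tau_2$ along that common edge into a quadrilateral $g\colon\square\to X$ (the edge becoming a diagonal), re-triangulate along the other diagonal, and perturb $g$ near the new diagonal, rel $\partial\square$, into general position with respect to the at most $1$-dimensional polyhedra carrying the sets $g^{-1}S$; if the two opposite vertices are $\ov{p}$-allowable the new edge is then $\ov{p}$-full and the two new $2$-simplexes $\ov{p}$-allowable, and if not, a preliminary analogous move supported near those vertices (legitimate since $\partial\zeta$ does not see them) pushes them off the singular set first. Each move leaves $\partial\zeta$ unchanged and strictly decreases a complexity measure (first the number of non-$\ov{p}$-allowable vertex incidences, then the number of over-dimensional faces), so iterating yields the required $\zeta'$.

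The main obstacle is exactly this last, degree-$2$ rerouting: surgering a $\ov{p}$-intersection $2$-chain with $\ov{p}$-full boundary, without altering the boundary, into one all of whose faces are $\ov{p}$-full. Its delicate points are that a non-full face can fail for two genuinely different reasons, that the successive re-gluings must be organized so the process terminates, and that the over-dimensional faces force a general-position argument inside the (polyhedrally) at most $1$-dimensional preimages of the strata.
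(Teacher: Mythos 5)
Parts (a) and the surjectivity half of (b) are essentially correct and follow the same concatenation strategy as the paper; your single folded $2$-simplex $\Phi=(\sigma_0\ast\sigma_1)\circ f$, whose boundary is exactly $\sigma_0+\sigma_1-\sigma_0\ast\sigma_1$, is even a small simplification of the auxiliary lemma used there. One repair is needed: in the surjectivity of $\cJ_1^{\ov p}$ you may not simply discard a summand which is a loop based at a non-allowable point (that changes the class in $H_1^{\ov p}$); you must first split it at an allowable interior point and re-concatenate into a loop at that point modulo $\partial C_2^{\ov p}(X)$, or subdivide beforehand as the paper does.

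The genuine gap is in the injectivity of $\cJ_1^{\ov p}$, which is the real content of the theorem. Your key step is to ``perturb $g$ near the new diagonal, rel the boundary of the square, into general position with respect to the polyhedra carrying $g^{-1}S$'', and, for bad vertices, to ``push them off the singular set''. On a topological perverse space there is no structure (PL, smooth or otherwise) allowing a continuous map to be perturbed into general position with respect to the strata; and if what you mean is precomposition with a homeomorphism of the square (a curved diagonal), then reparametrizing the two complementary regions as $2$-simplexes by arbitrary homeomorphisms destroys the polyhedral-dimension bounds of \defref{def:dimension}, which are not invariant under topological reparametrization. The admissible move --- the one the paper makes --- keeps all maps fixed and changes only the triangulation of the domain by affine (join) retriangulations, chosen generically with respect to the fixed preimage sets: the finite set $D$ of preimages of strata with $D\ov p(S)=0$ and a polyhedron $P$ of dimension at most $1$ containing the preimages of strata with $D\ov p(S)=-1$. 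To make this work the paper first performs the pseudobarycentric subdivision of \cite{CST8}, which ensures each small $2$-simplex has at most one non-allowable edge and at most two non-allowable vertices and provides, for each bad edge, an interior point $b$ whose cone edges are already allowable, so the pairwise regluing across a bad edge is done through $b$ by PL reparametrizations with no genericity needed at that stage.

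Moreover, the bad vertices cannot be eliminated by local pairwise moves: a bad vertex is shared by a whole fan of triangles, the straight segment joining two fixed outer vertices cannot be put in general position, and the quadrilateral move degenerates when only two triangles around the vertex remain. One must assemble all simplexes incident to the bad vertex into a single map $\beta\colon K_\beta\to X$ from a triangulated disk and replace the interior vertex by a generic point $a'_0$ of the domain, chosen off $D\cup P$ and such that each new edge $\langle a'_0,b_i\rangle$ meets $P$ in a finite set; since the new simplexes are $\beta$ composed with affine maps, their allowability can then be verified. Your complexity-decreasing iteration needs this global step to terminate. As written, the degree-$2$ rerouting is a plan rather than a proof, and it is exactly the point the theorem turns on.
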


In general, $\cJ_{2}^{\ov{p}}$ is not an isomorphism, as shows the torus in \exemref{exam:gajernotintersection}.
Let us emphasize that we are dealing with a general perversity $\ov{p}$, without any restriction on it. 
(For instance, the global argument is simpler with the hypothesis $\ov{p}\leq \ov{t}$.) 
We begin with a first technical lemma.

\begin{lemma}\label{*tau}
Let $(X,\ov p)$ be a perverse space.  
Let $\sigma_0, \sigma_1 \colon[0,1] \to X$ be two $\ov p$-allowable simplexes  with $\sigma_0(1) = \sigma_1(0)$
and $\sigma_{0}(0),\,\sigma_{1}(1)\in C_0^{\ov p}(X)$. 
We consider the simplex  $\sigma_0 * \sigma_1 \colon [0,1] \to X$ defined by 
 $$
    \sigma_0 * \sigma_1(t) = \left\{
    \begin{array}{ll}
    \sigma_0(2t) ,& \hbox{if } t\leq 1/2,\\
      \sigma_1(2t-1) ,& \hbox{if } t\geq 1/2.
      \end{array}
      \right.
      $$
Then, 
there exists a $\ov p$-intersection 2-chain  $\tau$ verifying 
\begin{equation}\label{sigma*}
\partial \tau =     \sigma_0 + \sigma_1 - \sigma_0*\sigma_1.
\end{equation}
\end{lemma}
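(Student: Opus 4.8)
The natural candidate for $\tau$ is the "triangle" whose three edges are $\sigma_0$, $\sigma_1$, and $\sigma_0*\sigma_1$, built by affine interpolation. Concretely, parametrize $\Delta^2$ by barycentric coordinates and define $\tau\colon\Delta^2\to X$ by sending $\Delta^2$ onto $X$ so that one edge is $\sigma_0$, the adjacent edge is $\sigma_1$, and the opposite edge is $\sigma_0*\sigma_1$; on the interior one interpolates, using that $[0,1]$ is convex and the pieces $\sigma_0$, $\sigma_1$ glue continuously at $\sigma_0(1)=\sigma_1(0)$. The simplicial identity to check is exactly \eqref{sigma*}, which is automatic once the three faces are identified as stated (with the right signs). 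So the content is not the construction of a continuous $\tau$ but the verification that $\tau$ and $\partial\tau$ are $\ov p$-allowable, i.e. that $\tau$ is a $\ov p$-intersection chain.

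The key point is to control $\tau^{-1}S$ for each singular stratum $S$. The boundary $\partial\tau = \sigma_0+\sigma_1-\sigma_0*\sigma_1$ is $\ov p$-allowable by hypothesis: $\sigma_0,\sigma_1$ are given $\ov p$-allowable, and $\sigma_0*\sigma_1$ is $\ov p$-allowable since $(\sigma_0*\sigma_1)^{-1}S$ is, up to the reparametrization, the union of $\sigma_0^{-1}S$ and $\sigma_1^{-1}S$, of polyhedral dimension $\leq 0$ by \eqref{equa:dimunion}. For $\tau$ itself, I would exploit the affine/conical nature of the interpolation: the preimage $\tau^{-1}S$ fibres over a one-dimensional set sitting inside the union of the preimages on the three edges, so one gets $\dim\tau^{-1}S\leq 1 + \max(\dim\sigma_0^{-1}S,\dim\sigma_1^{-1}S)\leq 1 + (\dim\Delta^1-2-D\ov p(S)) = \dim\Delta^2 - 2 - D\ov p(S)$, which is precisely the $\ov p$-allowability inequality \eqref{equa:admissibleST} for a $2$-simplex. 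The hypotheses $\sigma_0(0),\sigma_1(1)\in C_0^{\ov p}(X)$ (i.e. those vertices are regular, or at least $\ov p$-allowable as $0$-chains) and the continuity at the shared endpoint ensure the interpolation does not create spurious intersections with $S$ at the vertices.

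The main obstacle, and the step to be careful with, is the dimension estimate $\dim\tau^{-1}S\leq 1+\dim(\text{boundary preimage})$: one must phrase the interpolation so that each point of $\tau^{-1}S$ lies on a segment joining a boundary point to a fixed vertex (or to the other edge), so that $\tau^{-1}S$ is contained in a polyhedron obtained as a "cone" (join) on the boundary preimage, whence the $+1$ in polyhedral dimension; here I would invoke the polyhedral-dimension formalism of \defref{def:dimension} together with \eqref{equa:dimunion}, exactly as in the cone computation in the proof of \propref{prop:coneandhomotopy} and in \cite[Proposition 4.5]{CST8}. Once this is set up, \eqref{sigma*} and the allowability of $\tau$ and $\partial\tau$ follow, giving the desired $\ov p$-intersection $2$-chain.
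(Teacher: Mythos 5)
Your proposal is correct, and it is organized more economically than the paper's proof. To make it precise, note that $X$ has no affine structure, so the ``interpolation'' must take place in the domain of $\sigma_0*\sigma_1$: take the affine map $f\colon\Delta^2=\langle a_0,a_1,a_2\rangle\to[0,1]$ with $f(a_0)=0$, $f(a_1)=1/2$, $f(a_2)=1$ and set $\tau=(\sigma_0*\sigma_1)\circ f$. Then $\partial_2\tau=\sigma_0$, $\partial_0\tau=\sigma_1$, $\partial_1\tau=\sigma_0*\sigma_1$, so the sign bookkeeping you declared ``automatic'' does work out and $\partial\tau=\sigma_0+\sigma_1-\sigma_0*\sigma_1$ on the nose; your dimension estimate is exactly the right one, since $\tau^{-1}S=f^{-1}\bigl((\sigma_0*\sigma_1)^{-1}S\bigr)$, an affine map raises polyhedral dimension of preimages by at most one, and \eqref{equa:dimunion} bounds $\dim(\sigma_0*\sigma_1)^{-1}S$ by $\max(\dim\sigma_0^{-1}S,\dim\sigma_1^{-1}S)$, giving \eqref{equa:admissibleST} for a $2$-simplex; allowability of $\partial\tau$ is immediate. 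The paper proceeds differently: it writes the desired chain as a sum of three $2$-simplexes --- a constant simplex $\eps_2$ at $\sigma_0(0)$, a degenerate ``prism'' $\beta$ built from $\sigma_0$ alone, and an affine reparametrization of $\sigma_0*\sigma_1$ chosen with a different map $f$, so that the reversed path $\ov\sigma_0$ and the constant paths cancel in the total boundary. The hypothesis $\sigma_{0}(0),\sigma_{1}(1)\in C_0^{\ov p}(X)$ is genuinely used there, to make the constant simplexes $\eps_j$ allowable; in your single-simplex construction it plays no role, and your remark that it prevents ``spurious intersections at the vertices'' is neither needed nor accurate --- allowability of your $\tau$ depends only on $(\sigma_0*\sigma_1)^{-1}S$. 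So your route buys a shorter argument that does not even require the endpoint hypothesis, while the paper's decomposition is just a different (and heavier) way of achieving the same boundary; the only things missing from your writeup are the explicit choice of $f$ and the face/sign verification.
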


\begin{proof}
This is a well-known argument, the  delicate point is the verification of the $\ov{p}$-allowability of the auxiliary simplexes and of their boundary.
First, the 1-simplex $\ov \sigma_0$ defined by $\ov \sigma_0(t) = \sigma_0(1-t)$ is $\ov p$-allowable since $\sigma_{0}$ is so.
The 1-simplex $\sigma_{0}*\sigma_{1}$ is also $\ov{p}$-allowable since,  for each singular stratum $S$, we have:
      $$
      \dim (\sigma_0*\sigma_1)^{-1}S \leq \max ( \dim \sigma_0^{-1}S, \dim  \sigma_1^{-1}S) \leq 1 - D\ov p(S) -2.
      $$
For any $j\in \N$, we consider $j$-simplexes, constant on $\sigma_{0}(0)$,  $\eps_j \colon \Delta^j \to X$.
They are $\ov p$-allowable if, for each singular stratum $S$ with $\sigma_0(0) \in S$,  we have
$ \dim \eps_j  ^{-1}S \leq j - D\ov p(S) -2$, which is a consequence of
 $\sigma_0(0) \in C_0^{\ov p}(X)$. 
 Since $\partial \eps_j = 0$ or  $\eps_{j-1}$, the simplexes $\eps_j$ are of $\ov p$-intersection.

 Let $a_{0},a_{1},a_{2}$ be the vertices of $\Delta^2$. The 2-simplex, $\beta\colon \Delta^2\to X$, defined by
 $\beta(t_{0}a_{0}+t_{1}a_{1}+t_{2}a_{2})=\sigma_{0}(t_{1})$ is $\ov{p}$-allowable since,
for each singular stratum $S$, we have
$$
\dim \beta^{-1}S \leq 1+\dim \sigma_0^{-1} S
\leq
2 - D\ov p(S) -2 .
$$
A direct computation gives
$\partial \beta =  \sigma_0  - \eps_1 + \ov \sigma_0$  which is a $\ov p$-allowable chain. 
Let $f\colon \Delta^2\to [0,1]$ be the linear map such that $f(a_{0})=1$, $f(a_{1})=0$ and $f(a_{2})=1/2$.
The 2-simplex, $\tau\colon \Delta^2\to X$, defined by $\tau=(\sigma_{0}\ast\sigma_{1})\circ f$, is also $\ov{p}$-allowable since,
for each singular stratum $S$, we have
$$
\dim \tau^{-1}S =\dim f^{-1}\left( (\sigma_0*\sigma_1)^{-1} S\right) \leq 1+ \dim (\sigma_0*\sigma_1)^{-1} S \leq 2 - D\ov p(S) -2.
$$
A computation gives $\partial \tau = \ov \sigma_0  -  \sigma_1 +  \sigma_0 * \sigma_1$
which is a $\ov{p}$-allowable chain.
In summary, the  2-chain $\gamma =  \eps_2 + \beta - \tau $ is of $\ov p$-intersection and verifies
$$\partial \gamma =\partial \eps_2 + \partial \beta -  \partial \tau = \eps_1 +(   \sigma_0  - \eps_1 + \ov \sigma_0) 
-  \ov \sigma_0  +  \sigma_1 - \sigma_0 * \sigma_1 =  
  \sigma_0
  + \sigma_1 -  \sigma_0 * \sigma_1.$$
\end{proof}

\begin{proof}[Proof of \thmref{InterHomolGajer01}]
The equality $C_0^{\ov p}(X) = C_0(\crG_{\ov p} X)$ gives  the surjectivity of $\cJ_{0}^{\ov{p}}$.
 \emph{Let us study the injectivity of $\cJ_{0}^{\ov{p}}$ and the surjectivity of $\cJ_{1}^{\ov{p}}$.}
 We get the claim if for any chain $ \eta \in C_1^{\ov p}(X)$   of this type,
with $ \partial \eta \in C_0(\crG_{\ov p}X) = C_0^{\ov p}(X)$,  we find 
$\alpha \in  C_1(\crG_{\ov p}X)$ with 
$\eta  - \alpha  \in \partial C_2^{\ov p}(X)$.
 This chain can be uniquely written 
 $\eta= \sum_{i\in I} n_i \sigma_i$ with $n_i = \pm 1$.

 As $ \eta \in C_1^{\ov p}(X)$, the chain $\eta$ and its boundary $\partial \eta$ are $\ov{p}$-allowable.
There can exist simplexes $\sigma_{i}$ with a non-$\ov{p}$-allowable face but this face must then be eliminated in the 
boundary $\partial \eta$. 
Unfortunately such a simplex is not accepted in a Gajer space. 
We must therefore substitute them by simplexes with $\ov{p}$-allowable faces.
 Using a subdivision, 
 we can suppose that each $\sigma_{i}$, $i\in I$, has at most one  vertex 
 which is not $\ov{p}$-allowable. 
Thus, we can write 
 $$
 \eta= \sum_{i\in I_0} n_i \sigma_i + 
  \sum_{i\in I_1} n_i \sigma_i=\eta_{0}+\eta_{1},
$$
 where $\sigma_{i}(0),\, \sigma_{i}(1)\in C_{0}^{\ov{p}}(X)$  if $i\in I_{0}$ and exactly one of these two
 points is $\ov{p}$-allowable  if $i\in I_{1}$.
By definition, we have $ \eta_0\in C_1(\crG_{\ov p}X)$.
 Let  $k\in I_1$ and suppose for simplicity that  $\sigma_k(1) \not \in C_0^{\ov p}(X)$.
 Since  $\partial \eta_{1} \in C_0^{\ov p}(X)$,  there exists $j\in I_1$ with
 $\sigma_k(1)= \sigma_j(0)$ and $n_k =  n_j$.
 The chain $\eta_{1}$ is a sum of chains $n_k \sigma_k +n_j\sigma_j $ of this type. 
 Following \lemref{*tau},
 we have $ \sigma_k + \sigma_j  -  \sigma_k*\sigma_j \in  \partial  C_2^{\ov p}(X)$ and 
 $\sigma_k * \sigma_j \in C_1(\crG_{\ov p} X)$.
 
\medskip
Let's get to the  difficult point: \emph{the map $\cJ_{1}^{\ov{p}}$ is a monomorphism.}
Given $ \eta \in C_2^{\ov p}(X)$  with $ \partial \eta \in C_1(\crG_{\ov p}X)$  we have to find $\alpha  \in C_2(\crG_{\ov p}X)$   with 
$\partial \eta  =  \partial \alpha$.
Let's start from a $\ov{p}$-allowable 2-simplex $\Delta^2\to X$, and apply to it the pseudobarycentric subdivision 
recalled in \remref{rem:pseusobarycenter}. 
By construction, all the simplexes 
containing the pseudobarycentre of $\Delta^2$ are $\ov{p}$-allowable. 
This means that  all the 1-faces of the new ``small'' simplexes are $\ov{p}$-allowable,   except at most one.
Also, all the vertices are $\ov{p}$-allowable except at most two.
Thus, any chain $ \eta \in C_2^{\ov p}(X)$ can  be uniquely written 
$$
 \eta= \sum_{i\in J_{1}} n_i \sigma_i +  \sum_{i\in J_{2}} n_i \sigma_i,
$$
 with $n_i = \pm 1$, all the 1-faces of the $\sigma_{i}$ with $i\in J_{1}$ are $\ov{p}$-allowable and the $\sigma_{i}$ with $i\in J_{2}$ have exactly two $\ov{p}$-allowable 1-faces.
 We proceed in two steps, beginning with the 1-faces.

 \smallskip
 $\bullet$ \emph{First step: Cancelation of the bad 1-faces of $J_{2}$}.  
 Let $\sigma_{k}\colon \langle a_{0},a_{1},a_{2}\rangle \to X$ with $k\in J_{2}$.
 Suppose that the restriction $\tau$ of $\sigma_{k}$ to $\langle a_{0},a_{1}\rangle$ is the bad face.
 So, there exists $\sigma_{j}\colon \langle a_{0},a_{1},a_{3}\rangle \to X$, $j\in J_{2}$, in such a way that
$\tau$ does not appear in the boundary of $n_k \sigma_k +n_j\sigma_j$.
We proceed to a pseudobarycentric subdivision of $\sigma_{k}$, of pseudobarycentre $b$,
see \remref{rem:pseusobarycenter}. 
By construction, the restrictions
to $\langle a_{0},b\rangle$ and $\langle a_{1},b\rangle$ are $\ov{p}$-allowable.
Let us define two new simplexes $\sigma'_k \colon \Delta^2 \to X$ and $\sigma'_j \colon \Delta^2 \to X$ as indicated in the figure below

\hskip 2.6cm
\scalebox{0.4}{
\begin{tikzpicture}

\draw  (-2,0)-- (2,0) -- (0,3.4)-- (-2,0);
\draw  (-2,0)-- (2,0) -- (0, -3.4)-- (-2,0);
\draw [red,thick] (-2,0)-- (2,0) ;

\draw (-2.5,0) node {$a_0$};
\draw (2.5,0) node {$a_1$};
\draw (0,3.9) node {$a_2$};
\draw (0,-3.9) node {$a_3$};

\draw (0,2.5) node {$\sigma_k$};
\draw[red] (0,.2) node {$\tau$};
\draw (0,-2.5) node {$\sigma_j$};

\draw [black,thick] (0,1.7)-- (-2,0);
\draw [black,thick] (0,1.7)-- (2,0);
\draw (0,1.3) node {$b$};

\draw  (10,1) -- (8,4.4)-- (6,1);

\draw [black,thick] (8,2.7)-- (6,1);
\draw [black,thick] (8,2.7)-- (10,1);

\draw (5.5,1) node {$a_0$};
\draw (10.5,1) node {$a_1$};
\draw (8,4.9) node {$a_2$};
\draw (8,2.3) node {$b$};

\draw [black,thick] (8,.7)-- (6,-1);
\draw [black,thick] (8,.7)-- (10,-1);
\draw  (10,-1)-- (8, -4.4)--(6,-1);

\draw (8,.3) node {$b$};
\draw (5.5,-1) node {$a_0$};
\draw (10.5,-1) node {$a_1$};
\draw (8,-4.9) node {$a_3$};

\draw  (18,1) -- (16,4.4)-- (14,1) ;
\draw[black,thick]  (18,1) -- (14,1) ;

\draw (13.5,1) node {$a_0$};
\draw (18.5,1) node {$a_1$};
\draw (16,4.9) node {$a_2$};
\draw (8,2.3) node {$b$};

\draw (16,2.5) node {$\sigma'_k$};
\draw[black,thick]  (18,-1) -- (14,-1) ;
\draw  (18,-1)-- (16, -4.4)--(14,-1);

\draw (13.5,-1) node {$a_0$};
\draw (18.5,-1) node {$a_1$};
\draw (16,-4.9) node {$a_3$};

\draw (16,-2.5) node {$\sigma'_j$};

\end{tikzpicture}
}

\noindent So, by construction, the 2-simplexes $\sigma'_k,\sigma'_j$ and all their 1-faces are $\ov p$-allowable
and we also have 
$\partial (n_k \sigma'_k +n_j\sigma'_j )= \partial (n_k \sigma_k +n_j\sigma_j)$.
The 2-chain 
$$
\eta' = \eta - (n_k \sigma_k +n_j\sigma_j ) + (n_k \sigma'_k +n_j\sigma'_j) \in C_2^{\ov p}(X)
$$
 verifies $\partial \eta'  = \partial \eta$ and the cardinal of the corresponding subset $J'_{2}$ is strictly smaller than that  of $J_{2}$.
 With iterations, we get a 2-chain $\alpha=\sum_{i\in K}n_{i}\sigma_{i}$, with $n_{i}=\pm 1$ and all the faces of the $\sigma_{i}$ $\ov{p}$-allowable except possibly two vertices.

  \medskip
$\bullet$ \emph{Second step: the vertices}. 
We start with the previous $\alpha$. 
Proceeding as before  to a pseudobarycentric subdivision, we can suppose that all the faces are $\ov{p}$-allowable, except possibly one vertex.
Therefore, we decompose
$$
 \alpha= \sum_{i\in K_{1}} n_i \sigma_i +  
\sum_{i\in K_{2}} n_i \sigma_i=\alpha_{1}+\alpha_{2}
$$
 where 
 all the faces of the $\sigma_{i}$ with $i\in K_{1}$ are $\ov{p}$-allowable and all the faces of the 
 $\sigma_{i}$ with $i\in K_{2}$ are $\ov{p}$-allowable except one vertex.
 Let  $i\in K_{2}$ and  suppose that $\sigma_{i}(a_{0})$ is the only not $\ov p$-allowable vertex.
Recall $\partial\alpha=\partial\eta\in    C_1(\crG_{\ov p} X )$. Thus $\sigma_{i}(a_{0})$ does not belong to $\partial \alpha$ and
there exists  $k \in K_{2}$ such that the restrictions of $\sigma_{i}$ and $\sigma_{k}$ to the face $\langle a_{0},a_{1}\rangle$ verifies
 $$n_{i} {\sigma_{i}}_{|<a_0,a_1>} + n_{k} {\sigma_{k}}_{|<a_0,a_1>}=0.$$
(We can have a cancellation with other faces of $\sigma_{k}$ but it is the same pattern with a sign adjustment.)

\begin{multicols}{2}
By repeating this  process, we construct a chain 
\begin{equation}\label{beta}
\beta = \sum_{k=0}^m n_{i_k} \sigma_{i_k} ,
\end{equation}
with $\{i_0, \ldots,i_m\} \subset K_{2}$ and $\partial \beta \in C_1(\crG_{\ov p}X)$.
This chain appears as a map still denoted by $\beta\colon K_{\beta}\to X$ from a polyhedron $K_{\beta}$ as in the figure beside. 
The letters $b_{i}$ are a notation for the vertices of $\Delta^2=\langle a_{0},a_{1},a_{2}\rangle$.

The illustrated situation corresponds to an annulation of the  vertex $ \sigma_{i_0}(a_0)$ after an iteration  of 4  steps. The case of 0 iteration corresponds to a disk.
In general,  we get a similar figure after  a finite number $m$ of steps.

\hskip -5cm
\scalebox{0.8}{
\definecolor{zzttqq}{rgb}{0.6,0.2,0}
\definecolor{ududff}{rgb}{0.30196078431372547,0.30196078431372547,1}
\definecolor{xdxdff}{rgb}{0.49019607843137253,0.49019607843137253,1}
\definecolor{qqqqff}{rgb}{0,0,1}
\begin{tikzpicture}[line cap=round,line join=round,>=triangle 45,x=.6cm,y=.6cm]
\clip(-12.346215761837963,-4) rectangle (17.97224084001275,6.0098305438753075);
\fill[line width=2pt,color=zzttqq,fill=zzttqq,fill opacity=0.10000000149011612] (0,0) -- (3,0) -- (1.52,1.9) -- cycle;
\fill[line width=2pt,color=zzttqq,fill=zzttqq,fill opacity=0.10000000149011612] (-1.68,1.94) -- (0,0) -- (-2,-1) -- cycle;
\fill[line width=2pt,color=zzttqq,fill=zzttqq,fill opacity=0.10000000149011612] (-2,-1) -- (1,-2) -- (0,0) -- cycle;
\fill[line width=2pt,color=zzttqq,fill=zzttqq,fill opacity=0.10000000149011612] (-1.68,1.94) -- (1.52,1.9) -- (0,0) -- cycle;
\fill[line width=2pt,color=zzttqq,fill=zzttqq,fill opacity=0.10000000149011612] (3,0) -- (0,0) -- (1,-2) -- cycle;
\draw [line width=2pt,color=zzttqq] (0,0)-- (3,0);
\draw [line width=2pt,color=zzttqq] (3,0)-- (1.52,1.9);
\draw [line width=2pt,color=zzttqq] (1.52,1.9)-- (0,0);
\draw [line width=2pt,color=zzttqq] (-1.68,1.94)-- (0,0);
\draw [line width=2pt,color=zzttqq] (0,0)-- (-2,-1);
\draw [line width=2pt,color=zzttqq] (-2,-1)-- (-1.68,1.94);
\draw [line width=2pt,color=zzttqq] (-2,-1)-- (1,-2);
\draw [line width=2pt,color=zzttqq] (1,-2)-- (0,0);
\draw [line width=2pt,color=zzttqq] (0,0)-- (-2,-1);
\draw [line width=2pt,color=zzttqq] (-1.68,1.94)-- (1.52,1.9);
\draw [line width=2pt,color=zzttqq] (1.52,1.9)-- (0,0);
\draw [line width=2pt,color=zzttqq] (0,0)-- (-1.68,1.94);
\draw [line width=2pt,color=zzttqq] (3,0)-- (0,0);
\draw [line width=2pt,color=zzttqq] (0,0)-- (1,-2);
\draw [line width=2pt,color=zzttqq] (1,-2)-- (3,0);
\begin{scriptsize}
\draw [fill=red] (0,0) circle (2pt);
\draw[color=qqqqff] (-2.3,2.2) node {$b_0$};
\draw[color=qqqqff] (2.1,2.2) node {$b_1$};
\draw[color=qqqqff] (2.2,0.4) node {$b_2$};
\draw[color=qqqqff] (1.6,-2.2) node {$b_3$};
\draw[color=qqqqff] (-2.5,-1.2) node {$b_4$};
\draw[color=zzttqq] (6,.3) node {$\beta$};
\draw[->, color=zzttqq, thick]  (4,0)-- (8,0);

 \node[anchor=east] at (0,1) (principio) {};
  \node[anchor=west] at (9,0) (fin) {$X$};
  \draw (principio) edge[out=0,in=90,->] (fin);
  \draw[color=qqqqff] (-.6,.2) node {$a_0$};
    \draw[] (6,2.4) node {$\sigma_0$};
    
     \node[anchor=east] at (0,-.5) (principiobis) {};
  \node[anchor=west] at (9.2,-.2) (finbis) {}{};
  \draw (principiobis) edge[out=-90,in=-90,->] (finbis);
    \draw[] (6,-2.8) node {$\sigma_3$};
    
        \draw[color=zzttqq] (-4,0.5) node {$K_\beta$};


\draw [color=xdxdff] (3,0) circle (0.5pt);
\draw [color=ududff] (1.52,1.9) circle (0.5pt);
\draw [color=ududff] (-1.68,1.94) circle (0.5pt);
\draw [color=ududff] (-2,-1) circle (0.5pt);
\draw [color=ududff] (1,-2) circle (0.5pt);
\end{scriptsize}
\end{tikzpicture}
}
\end{multicols}

The chain $\alpha$ is the sum of $\alpha_1 \in C_2(\crG_{\ov p}X)$ and some chains of type
\eqref{beta}. So, we get the claim if we  find $\beta' \in C_2(\crG_{\ov p}X)$ with $\partial \beta = \partial \beta'$.
This new chain $\beta'$ is obtained from $\beta$ slightly moving the vertex $a_0$ in $a'_{0}$, keeping the $b_{i}$ unchanged, and corresponding to a new triangulation of 
the polyhedron $K_{\beta}$.
We denote by $\sigma'_{i}$ the simplex corresponding to $\sigma_{i}$ in this new triangulation. 
%
We need to prove that each $\sigma'_{i_k}$ is a $\ov p$-full simplex for a convenient choice of $a'_0$. 
Since $\partial \eta \in C_1(\crG_{\ov p} X)$ then  it suffices to prove that all the faces of the simplexes $\sigma'_{i_k}$ meeting $\beta (a'_0)$ are $\ov p$-allowable.
For that, we adapt to the case of polyhedra the proof of \cite[Proposition 6]{CST8} made for simplexes.
We distinguish the three possible dimensions.

 \smallskip
 \emph{1) The 2-dimensional faces}. The simplexes $\sigma'_{i_k}$  are $\ov p$-allowable  for any choice of $a'_{0}$, since we have, for any singular stratum,
\begin{eqnarray*}
\max \{ \dim \sigma'^{-1}_{i_k}S \mid 0\leq k\leq  m \} &=&
\dim \beta^{-1}S \\
&=&  \max \{ \dim \sigma^{-1}_{i_k}S \mid 0\leq k\leq  m \} \}\\
&  \leq & 2 -D\ov p(S) -2,
\end{eqnarray*}
Notice that $\beta^{-1}S = \emptyset$ for each singular stratum $S \in \cS_X$ with $D\ov p(S)>0$.

\smallskip
Before studying the 0 and 1-dimensional faces, we establish some notation. Since the simplexes of $\beta$,
as well as their faces, are $\ov{p}$-allowable, we get the following properties (cf. \defref{def:homotopygeom}).

$\bullet$ The subset $D=\cup\{\beta^{-1}S\mid D\ov{p}(S)=0\}$
is a finite subset included in $\inte K_{\beta}=K_{\beta}\menos \partial K_{\beta}$.

$\bullet$ The subset $E=\cup\{\beta^{-1}S\mid D\ov{p}(S)=-1\}$
is included in $K_{\beta}$ and its dimension is smaller than~1. We choose a polyhedron $P$
such that $E\subset P$ and $\dim P\leq 1$.

\smallskip 
\emph{2) The 0-dimensional faces}.  
All vertices, except $a'_{0}$, belong to the boundary and are therefore $\ov{p}$-allowable.
The restriction of $\beta$ to $\langle a'_{0}\rangle$ are 
 $\ov p$-allowable if $a'_{0}\notin D\cup P$. By dimensional reasons, the open set
 $\cO
=\inte K_{\beta}\menos (D\cup P)$ is dense in $\inte K_{\beta}$. If we choose $a'_{0}$
in this subset (which is not empty!) we get the $\ov{p}$-allowability of all the 0-dimensional faces 
of the simplexes $\sigma'_{i_{k}}$.

\smallskip
 \emph{3) The 1-dimensional faces}.  
 The one-dimensional faces   appearing in $\partial \beta$ are $\ov p$-allowable. Thus, we are reduced to the 1-simplexes obtained by the restriction of $\beta$ to
 $\Delta^1= \langle a'_0,b_i\rangle$, with $0\leq i\leq m$.
 They are  $\ov p$-allowable if the following conditions hold, for any $i$,
 \begin{equation}\label{S=0-1}
 \left\{
 \begin{array}{l}
 \text{(i)}\quad D\cap \langle a'_{0},b_{i}\rangle=\emptyset, \quad\text{and}\\
\text{(ii)}\quad P\cap \langle a'_{0},b_{i}\rangle \quad\text{is a finite set.}
 \end{array}
 \right.
 \end{equation}
 Since $D$ is a finite set, the family of points $a'_{0}\in \inte K_{\beta}$ verifying (i) are a dense open subset
 $\cO'$ of $\inte K_{\beta}$. Notice that $\cO\cap \cO'$ is also a dense open subset of $\inte K_{\beta}$.
 Applying the general position principle (\cite[Section 5.34]{MR0350744}), 
 we find $a'_{0}\in \cO\cap \cO'$ verifying (ii) for any $i$.
 This gives \eqref{S=0-1}.

\smallskip
 We have proven that all  faces of the simplexes $\sigma'_{i_k}$ are $\ov p$-allowable. We obtain
$\beta' \in C_2(\crG_{\ov p}X)$ and $\partial \beta = \partial \beta'$ which ends the proof.
\end{proof}

We continue our study of the relationship between the homologies $H_*(\crG_{\ov p} X)$ and $H_*^{\ov p}(X)$
by looking to higher degrees. 
As the links are supposed to be $\ov{p}$-connected, 
we can suppress the mention of the basepoint in the rest of this section.

\begin{theoremb}\label{InterHomolGajer}
Let $(X,\ov p)$ be a  perverse CS  set and $k\geq 2$ be an integer. 
We suppose
$\pi_{j}^{\ov p}(L)=0$ for each link  $L$ of $X$  and each $j\leq k-1$. 
Then, the  inclusion of chain complexes, $C_*(\crG_{\ov p}X;\Z) \hookrightarrow C_*^{\ov p}(X;\Z)$, induces
\begin{itemize}
\item[(i)]  an isomorphism:
$
\cJ_{j}^{\ov{p}}  \colon H_j (\crG_{\ov{p}}X;\Z)\stackrel{\cong}{\longrightarrow} H_j^{\ov p}(X;\Z),
$
for $j \leq k$, 
and
\item[(ii)] an epimorphism
$
\cJ_{k+1}^{\ov{p}}  \colon H_{k+1} (\crG_{\ov{p}}X;\Z)\twoheadrightarrow H_{k+1}^{\ov p}(X;\Z).
$
\end{itemize}
\end{theoremb}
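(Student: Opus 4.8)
The plan is to compare, degree by degree, the two functors $U\mapsto H_*(\crG_{\ov p}U;\Z)$ and $U\mapsto H_*^{\ov p}(U;\Z)$ on the open subsets of $X$ (equipped with the induced filtration and perversity) together with the natural transformation $\cJ^{\ov p}_*$ between them, by an induction on $\depth X$ in the style of King \cite[Section 3]{MR800845}, exactly as in the proof of \thmref{thm:homotopyinvariance}; see also \cite[Theorem B]{CST3} and \cite[Theorem 5.5.1]{FriedmanBook}. Degrees $0$ and $1$ are already settled, with no hypothesis, by \thmref{InterHomolGajer01}, so we only need to control the degrees $2\leq j\leq k+1$. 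Both functors carry Mayer--Vietoris exact sequences for open covers (\thmref{thm:MV} and its classical analogue), satisfy excision (\thmref{thm:excision}) and the $\cU$-small chains property (\propref{prop:Usmall}), and commute with increasing unions of open subsets; and $\cJ^{\ov p}_*$ is compatible with all of this. Writing $\cW(U)$ for the assertion ``$\cJ^{\ov p}_j(U)$ is an isomorphism for $j\leq k$ and an epimorphism for $j=k+1$'', and introducing the properties $P(\ell),Q(\ell),R(\ell)$ for CS sets of depth $\leq\ell$ whose links (for the induced perversities) are $\ov p$-connected up to degree $k-1$, as in that proof, it suffices to prove the implications $P(\ell)\Rightarrow R(\ell)$, $R(\ell)\Rightarrow Q(\ell)$ and $P(\ell)\wedge Q(\ell)\Rightarrow P(\ell+1)$. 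The last one is handled, as there, by the Mayer--Vietoris and Zorn's lemma argument of \cite[Page 257]{FriedmanBook} together with the colimit argument of \cite[Proposition 15.9]{MR0402714}; depth $0$ is trivial since then $\crG_{\ov p}X=\sing X$ and $C_*^{\ov p}(X)=C_*(X)$, so $\cJ^{\ov p}_*$ is the identity.

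For $P(\ell)\Rightarrow R(\ell)$ and $R(\ell)\Rightarrow Q(\ell)$ one reduces to a cone: by \propref{prop:homotopyxR} one has $\crG_{\ov p}(M\times\rc W)\cong\sing M\times\crG_{\ov p}(\rc W)\simeq\crG_{\ov p}(\rc W)$ for a trivially filtered manifold $M$, while $H_*^{\ov p}(M\times\rc W)\cong H_*^{\ov p}(\rc W)$ on the intersection side, both identifications being natural in $\cJ^{\ov p}_*$. So everything reduces to proving $\cW(\rc W)$ assuming $\cW$ for every CS set of depth $\leq\ell$. Here $W$ is a link of $X$, and its links are links of $X$ as well, so $W$ satisfies the standing hypothesis of the theorem and, having depth $\leq\ell$, enjoys $\cW(W)$; moreover $\crG_{\ov p}W$ is $(k-1)$-connected, so the classical Hurewicz theorem together with $\cW(W)$ in degrees $\leq k$ gives $\widetilde H_j^{\ov p}(W)=0$ for $j\leq k-1$. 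Now invoke the two cone formulas: on the intersection side $\widetilde H_j^{\ov p}(\rc W)\cong\widetilde H_j^{\ov p}(W)$ for $j\leq D\ov p(\tv)$, induced by the inclusion $W\hookrightarrow\rc W$, and $\widetilde H_j^{\ov p}(\rc W)=0$ for $j\geq D\ov p(\tv)+1$ \cite[Proposition 2.7]{CST8}; on the Gajer side, by \propref{prop:coneandhomotopy} (via \propref{prop:isolatedgajer} and \propref{prop:cone}), $\crG_{\ov p}(\rc W)$ is the $D\ov p(\tv)$-th Postnikov stage of $\crG_{\ov p}W$, the truncation map being again induced by $W\hookrightarrow\rc W$.

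Set $m=D\ov p(\tv)$ and $Y=\crG_{\ov p}W$. If $m\leq k-1$, then $P_mY$ is weakly contractible, so $H_j(\crG_{\ov p}\rc W)=0$ for $j\geq 1$, and also $\widetilde H_j^{\ov p}(\rc W)=0$ for $j\geq 1$ (it equals $\widetilde H_j^{\ov p}(W)=0$ when $j\leq m$ and vanishes when $j>m$); hence $\cW(\rc W)$ holds trivially. If $m\geq k$, then $P_mY$ is still $(k-1)$-connected and the homotopy fibre of $Y\to P_mY$ is $m$-connected, so the map $H_j(\crG_{\ov p}W)\to H_j(\crG_{\ov p}\rc W)$ is an isomorphism for $j\leq m$ and an epimorphism for $j=m+1$. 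For $2\leq j\leq k$, a chase of the naturality square of $\cJ^{\ov p}_*$ for $W\hookrightarrow\rc W$ — whose left column $\cJ^{\ov p}_j(W)$ is an isomorphism by $\cW(W)$ and whose horizontal maps are the cone isomorphisms just recalled — shows that $\cJ^{\ov p}_j(\rc W)$ is an isomorphism; and the same square in degree $k+1$, using the epimorphism $\cJ^{\ov p}_{k+1}(W)$ when $m\geq k+1$ or the vanishing $\widetilde H_{k+1}^{\ov p}(\rc W)=0$ when $m=k$, shows that $\cJ^{\ov p}_{k+1}(\rc W)$ is an epimorphism. This establishes $\cW(\rc W)$ and closes the induction.

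The main obstacle will be to set up the King-type descent correctly for this \emph{truncated} comparison — an isomorphism only up to degree $k$ and surjectivity in degree $k+1$, rather than an isomorphism in all degrees — so that the Mayer--Vietoris and five-lemma bookkeeping in the step $P(\ell)\wedge Q(\ell)\Rightarrow P(\ell+1)$ and in the colimit step propagates exactly this ``iso then epi'' pattern; one must also check that the connectivity hypothesis on links is genuinely inherited along iterated links, so that $\cW(W)$ is actually available at each stage of the recursion. The fact that degrees $0$ and $1$ are disposed of unconditionally by \thmref{InterHomolGajer01} is precisely what makes the Hurewicz-type identifications in degrees $\geq 2$ go through cleanly.
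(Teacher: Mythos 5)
Your route is essentially the paper's: degrees $0$ and $1$ are disposed of by \thmref{InterHomolGajer01}, and the rest is an induction on depth with the genuine content concentrated in a cone step, the gluing being done by Mayer--Vietoris, filtered colimits and Zorn's lemma. The paper packages this as the property $P_k$ of \defref{defBPk}, the cone step \propref{prop:LtocL} and the Mayer--Vietoris step \propref{MV}, followed by a chart-basis (cubes times truncated cones) plus Zorn induction rather than the literal $P/Q/R$ bookkeeping of \thmref{thm:homotopyinvariance}, but the content is the same. Your cone argument --- identifying $\crG_{\ov p}(\rc W)$ with the $D\ov p(\tv)$-th Postnikov stage of $\crG_{\ov p}W$, splitting into the cases $m\leq k-1$ and $m\geq k$, and combining the fibre-connectivity comparison of $H_*(\crG_{\ov p}W)\to H_*(\crG_{\ov p}\rc W)$ with the intersection cone formula and the classical Hurewicz theorem --- is a correct repackaging of \propref{prop:LtocL}. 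The two points you flag as remaining work are indeed routine: the ``iso up to $k$, epi at $k+1$'' pattern passes through the four/five lemma in Mayer--Vietoris ladders and through filtered colimits, and the link hypothesis is inherited because links of points of open subsets of $X$ (in particular of the deleted chart $\R^i\times W\times\,]0,1[$, which is a CS set of strictly smaller depth, through which one transfers $\cW(W)$ via \propref{prop:homotopyxR} and stratified homotopy invariance of $H^{\ov p}_*$) are links of $X$.

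The one step that fails as written is your justification of $R(\ell)\Rightarrow Q(\ell)$: for a non-contractible trivially filtered manifold $M$ neither $\crG_{\ov p}(M\times\rc W)\simeq\crG_{\ov p}(\rc W)$ nor $H_*^{\ov p}(M\times\rc W)\cong H_*^{\ov p}(\rc W)$ holds (already $M=S^1$ is a counterexample); the collapse-onto-the-cone trick works for the weak-equivalence statement of \thmref{thm:homotopyinvariance} but does not transport the comparison map $\cJ^{\ov p}_*$. This is repairable without new ideas: either prove $Q(\ell)$ from $R(\ell)$ by covering $M$ with Euclidean charts and running the same Mayer--Vietoris/colimit/Zorn machinery, or use the natural K\"unneth short exact sequences (with Tor terms) on both sides, through which the iso-up-to-$k$, epi-at-$(k{+}1)$ pattern passes; the paper itself sidesteps general $M$ by working inside each conical chart with a countable basis of cubes times truncated cones, closed under finite intersections. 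With that sentence replaced by an actual argument, your proof is the paper's proof in slightly different clothing.
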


Before  the proof, we introduce  a property  adapted to conification and Mayer-Vietoris sequences.

\begin{definition}\label{defBPk}
Let $(X,\ov p)$ be a perverse space and  $k \in \Z$.
We say that  $X$ \emph{verifies the property}  $P_k$, written $P_k(X)$, if the  inclusion,
$C_*(\crG_{\ov p}X;\Z) \hookrightarrow C_*^{\ov p}(X;\Z)$, of chain complexes induces
\begin{itemize}
\item[(i)]  the isomorphism
$
\cJ_{j}^{\ov{p}} \colon H_j (\crG_{\ov{p}}X;\Z)\stackrel{\cong}{\longrightarrow} H_j^{\ov p}(X;\Z),
$
for $j \leq k$, 
and
\item[(ii)]the epimorphism
$
\cJ_{k+1}^{\ov{p}} \colon H_{k+1} (\crG_{\ov{p}}X;\Z)\twoheadrightarrow H_{k+1}^{\ov p}(X;\Z).
$
\end{itemize}
\end{definition}

Notice that \thmref{InterHomolGajer01} gives $P_{0}(X)$.

\begin{proposition}\label{prop:LtocL}
 Let $L$ be a  compact  filtered space 
 and $\rc L$ be the open cone, with the conic filtration.
Let $\ov{p}$ be a   perversity on $\rc L$,  
we also denote  by $\ov{p}$ the perversity induced on $L$.
Let $k\geq  2$ be an integer. 
We suppose $\pi_{j}^{\ov p}(L)=0$ for each $j\leq k-1$. 
Then, the property $P_k(L)$ implies $P_k(\rc L)$.
\end{proposition}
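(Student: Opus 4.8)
The plan is to compare the cone $\rc L$ with its link $L$ --- embedded as $L\times\{1/2\}$ --- in a range of degrees, for both the homology of the Gajer space and intersection homology, and then to transport the hypothesis $P_k(L)$ across that comparison. Write $D=D\ov p(\tv)$ and $\cJ_j=\cJ_j^{\ov p}$. Since $\cJ_0$ and $\cJ_1$ are isomorphisms for every perverse space by \thmref{InterHomolGajer01}, and $k\ge 2$, it suffices to prove that $\cJ_j(\rc L)$ is an isomorphism for $2\le j\le k$ and an epimorphism for $j=k+1$. The inclusion $\iota\colon L=L\times\{1/2\}\hookrightarrow \rc L$ is a homeomorphism and stratified map which preserves the codimension of the strata and the induced perversity, so it induces compatible chain maps $C_*(\crG_{\ov p}L;\Z)\hookrightarrow C_*(\crG_{\ov p}\rc L;\Z)$ and $C_*^{\ov p}(L;\Z)\hookrightarrow C_*^{\ov p}(\rc L;\Z)$, hence for every $j$ a commutative square
$$\xymatrix{
H_j(\crG_{\ov p}L;\Z)\ar[r]^-{a_j}\ar[d]_-{\cJ_j(L)}& H_j(\crG_{\ov p}\rc L;\Z)\ar[d]^-{\cJ_j(\rc L)}\\
H_j^{\ov p}(L;\Z)\ar[r]^-{d_j}& H_j^{\ov p}(\rc L;\Z).
}$$
I would then split into two cases according to the position of $D$ relative to $k$.

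\emph{Case $D\ge k$.} Here \propref{prop:cone} (taken with $\cE=\Z$) gives $H_j(\crG_{\ov p}\rc L,\crG_{\ov p}L;\Z)=0$ for $j\le D+1$, so the long exact sequence of the pair shows that $a_j$ is an isomorphism for $j\le D$ and an epimorphism for $j=D+1$; in particular $a_j$ is an isomorphism for $j\le k$ and $a_{k+1}$ is an epimorphism. The cone formula for intersection homology (\cite[Proposition 2.7]{CST8}, \cite[Proposition 5.2]{CST3}) gives that $d_j$ is an isomorphism for $j\le D$ and that $\widetilde H_j^{\ov p}(\rc L;\Z)=0$ for $j>D$; hence $d_j$ is an isomorphism for $j\le k$ and $d_{k+1}$ is an epimorphism. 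Since $P_k(L)$ says that $\cJ_j(L)$ is an isomorphism for $j\le k$ and an epimorphism for $j=k+1$, the identity $\cJ_j(\rc L)\circ a_j=d_j\circ\cJ_j(L)$ gives at once, for $j\le k$, that $\cJ_j(\rc L)$ is an isomorphism ($\cJ_j(\rc L)=d_j\circ\cJ_j(L)\circ a_j^{-1}$ is a composite of isomorphisms), and for $j=k+1$, that $\cJ_{k+1}(\rc L)$ is an epimorphism, since the composite $\cJ_{k+1}(\rc L)\circ a_{k+1}=d_{k+1}\circ\cJ_{k+1}(L)$ is onto. This proves $P_k(\rc L)$.

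\emph{Case $D\le k-1$.} Now the comparison with $L$ collapses on the Gajer side. From $\pi_j^{\ov p}(L)=0$ for $j\le k-1$ and \propref{prop:coneandhomotopy} we get $\pi_\ell^{\ov p}(\rc L)=\pi_\ell^{\ov p}(L)=0$ for $\ell\le D$ and $\pi_\ell^{\ov p}(\rc L)=0$ for $\ell>D$, hence $\pi_\ell^{\ov p}(\rc L)=0$ for every $\ell\ge 0$; as $\crG_{\ov p}\rc L$ is a Kan complex (\propref{prop:gajerkan}) this forces it to be weakly contractible, so $\widetilde H_j(\crG_{\ov p}\rc L;\Z)=0$ for all $j$. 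On the other side, $\crG_{\ov p}L$ is a Kan complex that is $(k-1)$-connected, so $\widetilde H_j(\crG_{\ov p}L;\Z)=0$ for $j\le k-1$; running this through $P_k(L)$ yields $\widetilde H_j^{\ov p}(L;\Z)=0$ for $j\le k-1$, hence for $j\le D$, and then the cone formula gives $\widetilde H_j^{\ov p}(\rc L;\Z)=0$ both for $j\le D$ and for $j>D$, i.e. for all $j$. Thus $\cJ_j(\rc L)$ has trivial source and target in every positive degree, and $\cJ_0,\cJ_1$ are isomorphisms by \thmref{InterHomolGajer01}; so $P_k(\rc L)$ holds.

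The heart of the argument, and the step I would write out most carefully, is precisely this split. The comparison $\crG_{\ov p}L\to\crG_{\ov p}\rc L$ is only an isomorphism in homology up to degree $D\ov p(\tv)+1$, a range governed by the perversity and possibly strictly smaller than $k$; the case $D\ov p(\tv)\le k-1$ therefore cannot be handled by a five-lemma/ladder chase and needs the separate observation that, under the connectivity hypothesis on $L$, the cone Gajer space is genuinely weakly contractible --- this is where $\pi_j^{\ov p}(L)=0$ for $j\le k-1$ is used essentially. A secondary point to mind is the reduced-versus-unreduced bookkeeping in the cone formula and in low degrees, which is why I dispose of $\cJ_0$ and $\cJ_1$ at the outset via \thmref{InterHomolGajer01}.
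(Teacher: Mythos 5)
Your proof is correct. It uses the same ingredients as the paper's argument — \propref{prop:cone} (equivalently the identification of the chain complexes of $\rc L$ and $\rc L\menos\{\tv\}\cong L\times ]0,1[$ in low degrees), \propref{prop:coneandhomotopy} combined with the classical Hurewicz theorem to get weak contractibility of $\crG_{\ov p}\rc L$, the cone formula for intersection homology, and \thmref{InterHomolGajer01} for degrees $0$ and $1$ — but organizes them differently. The paper argues degree by degree, comparing each $j$ with $q=D\ov p(\tv)$, and for the surjectivity of $\cJ^{\ov p}_{k+1}$ in the regime $k+1\leq q$ it passes through $\pi_{k+1}^{\ov p}$ and the surjectivity of the Hurewicz map of $\crG_{\ov p}L$ (which is where it uses the connectivity hypothesis in that case). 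You instead make a single dichotomy $D\ov p(\tv)\geq k$ versus $D\ov p(\tv)\leq k-1$; in the first case you run the ladder of the pair $(\crG_{\ov p}\rc L,\crG_{\ov p}L)$, whose relative homology vanishes in degrees $\leq D\ov p(\tv)+1$ by \propref{prop:cone}, so the epimorphism in degree $k+1$ comes out purely homologically, with no Hurewicz detour and in fact without the hypothesis $\pi_j^{\ov p}(L)=0$ in that regime (in both proofs that hypothesis is only needed for the weak contractibility of $\crG_{\ov p}\rc L$ when $D\ov p(\tv)\leq k-1$). The one point worth keeping explicit in a write-up is that the cone-formula isomorphism $H_j^{\ov p}(L)\to H_j^{\ov p}(\rc L)$ for $j\leq D\ov p(\tv)$ is the map induced by the slice inclusion — exactly how the paper uses it as the map $J_1$ — so your comparison square genuinely commutes at the chain level; with that noted, both cases of your argument are complete.
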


In other words, we  prove that $P_k(\rc L \menos \{ \tv \})$ implies $P_k(\rc L)$, 
cf. \propref{prop:homotopyxR} and \cite[Corollary 1]{CST8}.

\begin{proof} 
Let $\tv$ be the apex of the cone $\rc L$, $\ov p(\tv)=p$ and $D\ov p(\tv) =q$.

(i) With \thmref{InterHomolGajer01},  we can take $2\leq j\leq k$. We have $C_{\leq q+1} (\crG_{\ov p} (\rc L)) = 
C_{\leq q+1} (\crG_{\ov p}(\rc L\menos \{ \tv\})) $ and $C_{\leq q+1}^{\ov p} (\rc L)= C_{\leq q+1}^{\ov p} (\rc L\menos \{ \tv\}) $.
If $j\leq q$, then the map   $\cJ_{j,\rc L}^{\ov{p}}\colon H_{j}(\crG_{\ov{p}}\rc L)\to H_{j}^{\ov{p}}(\rc L)$ 
is isomorphic to the map,
$\cJ_{j,\rc L\menos\{\tv\}}^{\ov{p}}H_j (\crG_{\ov{p}}( \rc L \menos \{ \tv \}))\to H_j^{\ov p}( \rc L \menos \{ \tv \})$
 induced by the inclusion. This is an isomorphism with the hypothesis $P_k( \rc L \menos \{ \tv \})$.

Suppose $ q+1 \leq j $. From $q\leq k-1$ and \propref{prop:coneandhomotopy}, we deduce $\pi^{\ov p}_{j} (\rc L)=0$.
The (classic) Hurewicz theorem implies $\widetilde H_*(\crG_{\ov p}\rc L)= 0$ and the map  $\cJ_{j,\rc L}^{\ov{p}}$ is a monomorphism.
From \cite[Proposition 3]{CST8}, we have $H_{j}^{\ov p} (\rc L)=0$ and the map  
$\cJ_{j,\rc L}^{\ov{p}}$ is an epimorphism.

(ii) We have to prove that $\cJ_{k+1,\rc L}^{\ov{p}}\colon H_{k+1}(\crG_{\ov{p}}(\rc L))\to H_{k+1}^{\ov{p}}(\rc L)$ is an epimorphism.

If $k+1\geq q+1$ then $ H_{k+1}^{\ov p} (\rc L) = 0$  (cf.  \cite[Proposition 3]{CST8} and $k+1\ne 0$) 
and therefore $\cJ_{k+1,\rc L}^{\ov{p}}$ is an epimorphism.
Let us suppose $k+1\leq q$. 
Consider the commutative diagram,
$$
\xymatrix{
\pi_{k+1}^{\ov p} ( L)\ar[r]^{J_2}   \ar[d]_-{h_{k+1, \crG_{\ov{p}}L}}
&
\pi_{k+1}^{\ov p} (\rc L)\ar[rr]^-{h_{k+1,\crG_{\ov{p}}\rc L}} 
&&
 H_{k+1}(\crG_{\ov p}\rc L) \ar[d]^-{\cJ_{k+1,\rc L}^{\ov{p}}}
  \\
    H_{k+1}(\crG_{\ov p}  L) \ar[r]^-{\cJ_{k+1,L}^{\ov{p}}}
    &
     H_{k+1}^{\ov p}( L)  \ar[rr]^-{J_1}
     &&  
   H_{k+1}^{\ov p}(\rc L),\\
}
$$
where the maps $J_{*}$ are induced by the inclusion $L\to \rc L$ and $h_{*,*}$ are the (classic) Hurewicz homomorphisms.
We know that $J_{1}$ is an isomorphism (\cite[Proposition 3]{CST8} and $k+1\leq q$) and that
$\cJ_{k+1,L}^{\ov{p}}$ is an epimorphism grants to $P_{k}(L)$.
From $\pi_{\ell}(\crG_{\ov{p}}L)=0$ for $\ell\leq k-1$ and the classic Hurewicz theorem ($k\geq 2)$ 
we obtain the surjectivity of
$h_{k+1,\crG_{\ov{p}}L}$.
We deduce the surjectivity of $\cJ_{k+1,\rc L}^{\ov{p}}$.
\end{proof}

\begin{proposition}\label{MV}
Let  $(X,\ov p)$ be a perverse space and $k\in \N$. 
For any open covering,  $\{U,V\}$, of $X$, we have
$$
P_k(U), P_k(V), P_k(U\cap V) \Longrightarrow P_k(X).
$$
\end{proposition}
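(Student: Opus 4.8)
The plan is to compare the long exact Mayer--Vietoris sequences for the two homology theories $H_*(\crG_{\ov p}-)$ and $H_*^{\ov p}(-)$ and run the five lemma in the relevant range. For the Gajer-space side we invoke \thmref{thm:MV}, which gives a Mayer--Vietoris long exact sequence
$$
\cdots\to H_j(\crG_{\ov p}(U\cap V);\Z)\to H_j(\crG_{\ov p}U;\Z)\oplus H_j(\crG_{\ov p}V;\Z)\to H_j(\crG_{\ov p}X;\Z)\to H_{j-1}(\crG_{\ov p}(U\cap V);\Z)\to\cdots
$$
For the $\ov p$-intersection homology side, the theorem of $\cU$-small chains (\propref{prop:Usmall}, whose conclusion is purely homological and, being proved there with local coefficients, applies a fortiori to the constant coefficients $\Z$) yields in the usual way a Mayer--Vietoris long exact sequence for $H_*^{\ov p}$ with the open cover $\{U,V\}$; the short exact sequence of chain complexes
$$
0\to C_*^{\ov p}(U\cap V;\Z)\to C_*^{\ov p}(U;\Z)\oplus C_*^{\ov p}(V;\Z)\to C_*^{\ov p,\cU}(X;\Z)\to 0
$$
together with $C_*^{\ov p,\cU}(X;\Z)\hookrightarrow C_*^{\ov p}(X;\Z)$ being a quasi-isomorphism does the job. (Alternatively one cites this directly from \cite[Chapter 6]{MR957919} / \cite{FriedmanBook}.) The chain-level inclusions $C_*(\crG_{\ov p}-;\Z)\hookrightarrow C_*^{\ov p}(-;\Z)$ are natural and compatible with restrictions to opens and with the boundary maps of both sequences, so they induce a ladder of the two long exact sequences in which every vertical arrow is one of the maps $\cJ_j^{\ov p}$ for $U$, $V$, $U\cap V$, or $X$.

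With the ladder in place the argument is a bookkeeping with the five lemma. Fix $j\le k$. By $P_k(U)$, $P_k(V)$, $P_k(U\cap V)$ the vertical maps $\cJ_j^{\ov p}$ for $U\cap V$, $U$, $V$ are isomorphisms in degrees $\le k$, hence so are $\cJ_j^{\ov p}(U\cap V)$, $\cJ_j^{\ov p}(U)\oplus\cJ_j^{\ov p}(V)$, $\cJ_{j-1}^{\ov p}(U\cap V)$, $\cJ_{j-1}^{\ov p}(U)\oplus\cJ_{j-1}^{\ov p}(V)$; the five lemma applied to the segment around $H_j(\crG_{\ov p}X)$ gives that $\cJ_j^{\ov p}(X)$ is an isomorphism for $j\le k$, which is point (i) of $P_k(X)$. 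For point (ii) we need $\cJ_{k+1}^{\ov p}(X)$ surjective: in the ladder segment
$$
\xymatrix@C=1.1em{
H_{k+1}(\crG_{\ov p}U)\oplus H_{k+1}(\crG_{\ov p}V)\ar[r]\ar[d] & H_{k+1}(\crG_{\ov p}X)\ar[r]\ar[d]^-{\cJ_{k+1}^{\ov p}} & H_{k}(\crG_{\ov p}(U\cap V))\ar[r]\ar[d]^-{\cong} & H_{k}(\crG_{\ov p}U)\oplus H_{k}(\crG_{\ov p}V)\ar[d]^-{\cong}\\
H_{k+1}^{\ov p}U\oplus H_{k+1}^{\ov p}V\ar[r] & H_{k+1}^{\ov p}X\ar[r] & H_{k}^{\ov p}(U\cap V)\ar[r] & H_{k}^{\ov p}U\oplus H_{k}^{\ov p}V
}
$$
the leftmost vertical map is surjective (it is $\cJ_{k+1}^{\ov p}(U)\oplus\cJ_{k+1}^{\ov p}(V)$, surjective by (ii) of $P_k(U)$ and $P_k(V)$), the next two to the right are isomorphisms (by (i) in degree $k$), so a straightforward diagram chase — exactly the ``four lemma'' for surjectivity — shows $\cJ_{k+1}^{\ov p}(X)$ is surjective. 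This establishes $P_k(X)$.

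The only genuinely delicate point is not the five-lemma chase but the \emph{compatibility of the two Mayer--Vietoris sequences through the natural transformation $\cJ_*^{\ov p}$}: one must check that the connecting homomorphisms of both long exact sequences are computed from the \emph{same} subdivision/zig-zag procedure so that the squares involving $\partial$ commute. This is where \propref{prop:Usmall} is used in an essential way — its subdivision operator $\sd$ and chain homotopy $T$ preserve $\ov p$-fullness, so the quasi-isomorphism $C_*^{\ov p,\cU}(X)\hookrightarrow C_*^{\ov p}(X)$ restricts to the corresponding statement for $\crG_{\ov p}^{\cU}X\hookrightarrow \crG_{\ov p}X$ and the two snake-lemma connecting maps agree up to the natural inclusion. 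Once this naturality is recorded, the proposition follows formally; I would write the comparison of connecting maps out carefully and leave the five-lemma chases to the reader as routine.
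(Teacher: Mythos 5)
Your proof is correct and follows essentially the same route as the paper: the paper simply applies the five lemma to the ladder formed by the Mayer--Vietoris sequence of \thmref{thm:MV} and the one for intersection homology (cited from \cite[Theorem 5.1]{CST8}) under the natural transformation $\cJ^{\ov{p}}_{*}$, exactly as you do, including the degree $k+1$ surjectivity chase. The only small slip is that \propref{prop:Usmall} concerns the Gajer complex $C_{*}(\crG_{\ov{p}}X;\cE)$ rather than $C_{*}^{\ov{p}}(X)$, so the small-chains quasi-isomorphism on the intersection-homology side should be taken from \cite[Theorem 5.1]{CST8} (or \cite{FriedmanBook}), as your alternative citation already allows.
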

\begin{proof}
It suffices to apply the Five Lemma to  the morphism $\cJ^{\ov{p}}_{*}$,
$$
\scalebox{0.7}{
\xymatrix{
H_j(\crG_{\ov p}(U\cap V) ) \ar[r]  \ar[d]
& H_j(\crG_{\ov p}(U ) )  \oplus H_j(\crG_{\ov p}( V) )  \ar[r]  \ar[d]
& 
H_j(\crG_{\ov p}(X ) ) \ar[r]  \ar[d]
&
H_{j-1}(\crG_{\ov p}(U\cap V) ) \ar[r] \ar[d]
& H_{j-1}(\crG_{\ov p}(U ) )  \oplus H_{j-1}(\crG_{\ov p}( V) ) \ar[d]
\\
H_j^{\ov p}(U\cap V) ) \ar[r] & H_j^{\ov p}(U )   \oplus H_j^{\ov p}( V)   \ar[r] & 
H_j^{\ov p}(X) \ar[r]  
&
H_{j-1}^{\ov p}(U\cap V)  \ar[r]& H_{j-1}(\crG_{\ov p}(U ) )  \oplus H_{j-1}^{\ov p} ( V), 
}
}
$$
between the Mayer-Vietoris  exact sequences (cf. \thmref{thm:MV} and 
\cite[Theorem 1]{CST8}). 
\end{proof}

\begin{proof}[Proof of \thmref{InterHomolGajer}]
We proceed by induction on the depth of the CS sets. If the depth is null, then $\cJ_{j}^{\ov{p}}$ is an isomorphism for each $j\in \N$
 since $H_*(\crG_{\ov p} X) = H_*(X) = H_*^{\ov p} (X)$.
  Let us study the inductive phase. We proceed in two steps.

\emph{First Step: the open subsets of the conical charts.}  
Let $\varphi \colon U \times \rc L \to V$ be a conical chart. 
We define a \emph{cube}  as a product 
$]a_{1},b_{1}[ \times \cdots \times 
]a_{m},b_{m}[ \subset U$, with $a_{\bullet}$, $b_{\bullet}$ $\in\Q$,
 and denote by $\cC$ the family of cubes.
The {\em truncated cone} $\rc_{t}L$ is  the quotient $\rc_{t}L = L\times [0,t[ / L \times \{ 0\}$. 
The following family,
$$
\mathcal{U} = 
\left\{ \varphi(C\times c_{t}L )\mid C \in \cC,
0<t<1, t\in\Q  \right\} \cup 
\left\{ \varphi(C \times L \times ]a,b[ ) \mid
C \in \cC, 
0\leq a < b \leq 1,\,a, b\in\Q
 \right\},
$$
is a countable open basis of $V$ closed by finite intersections. 
By induction hypothesis we have $P_k(L)$. %
From it and \propref{prop:LtocL}, we deduce $P_{k}(\rc_{t}L)$ and thus
$P_{k}(\varphi(C\times \rc_{t}L))$ since $C$ is homeomorphic to $\R^n$ and $\varphi$ is a stratified homeomorphism.
Similarly, from $P_{k}(L)$, we deduce $P_{k}(\varphi(C\times L\times ]a,b[))$.
In summary, we have proven $P_{k}(W)$ for any $W\in \cU$.

Let $\cU_{1}$ be the family of finite unions of elements of $\cU$. We consider $U=\cup_{i=1}^pU_{i}\in\cU_{1}$. 
By induction on $p$, from  $(U_1 \cup \cdots \cup U_{p-1})  \cap U_{p} = (U_1 \cap U_{p+1}  ) \cup \cdots \cup (U_{p-1} \cap U_{p} )
$, we deduce $P_{k}((U_1 \cup \cdots \cup U_{p-1})  \cap U_{p})$.
\propref{MV} implies  $P_k(U)$ for each $U \in \cU_1$.
Notice that $\cU_{1}$ is a countable open basis of $V$ closed by finite intersections.

Let $\cU_{2}$ be the family of numerable unions of elements of $\cU$. We consider $U=\cup_{i\in\N}U_{i}\in \cU_{2}$.
By setting $V_{i}=V_{i-1}\cup U_{i}$ we get $U=\cup_{i\in \N}V_{i}$ where $(V_{i})_{i}$ is an increasing sequence of open subsets in $\cU_{1}$.
A classic argument for homology theories with compact supports gives
$$
H_*(\crG_{\ov p} U) = \varinjlim_iH_*(\crG_{\ov p} V_i)
\ \ \ 
\hbox{ and }
\ \ \ 
H_*^{\ov p}(U)= \varinjlim_i H_*^{\ov p}(V_i).
$$
From $P_{k}(V_{i})$ for all $i\in \N$, we deduce $P_{k}(U)$ for each $U\in\cU_{2}$.
We get the claim since $\cU_{2}$ is  the topology of $V$.

\emph{Second Step:  we prove $P_k(X)$.}
We consider the family $\cV$ of open subsets $V$ of $X$ with $P_{k}(V)$. 
We order $\cV$ by inclusion. 
If $(V_{i})_{i\in I}$ is an increasing family of elements of $\cV$, an argument as above implies 
 $P_{k}(\cup_{i\in I}V_{i})$.
 Therefore, by Zorn's lemma, the family $\cV$ has a maximal element $W$.
We are reduce to show  $W = X$. Let us suppose  the existence of  $x\in X\menos W$. 
Let $\varphi \colon U \times \rc L \to V$ be a conical chart of $x$. 
The first step gives $P_k(V)$ and $P_k(W \cap V)$. We also  have $P_k(W)$ by choice of $W$. 
From \propref{MV}, we deduce $P_k(W\cup V)$.
The maximality of $W$ implies $W=W\cup V$ and $x \in W$. From this contradiction, we deduce $W=X$.
\end{proof}

\begin{proof}[Proof of \thmref{thm:hurintersection}]
The two properties are consequences of the Hurewicz theorem for $\crG_{\ov{p}}X$ and results on the map
$H_j (\crG_{\ov{p}}X;\Z){\longrightarrow} H_j^{\ov p}(X;\Z)$. 
Property 1) is a consequence of \thmref{InterHomolGajer01} and
Property 2) is covered by \thmref{InterHomolGajer}.
\end{proof}

\begin{example}\label{exem:pashurewicz}
We construct a CS set $X$  with $\pi_{j}^{\ov{p}}(X)=0$, for $j\leq 2$, (i.e., $k=3$ in \thmref{InterHomolGajer})
and $\cJ^{\ov{p}}_{4}$ not surjective. Thus the hypotheses on the links cannot be removed in
\thmref{thm:hurintersection}.2). 

We start with the Hopf principal bundle $S^3 \to S^7  \xrightarrow{\tau}  S^4$ and  quotient it
by the $\Z_2$-action to obtain the bundle $\RP^3 \to \RP^7  \xrightarrow{\tau'} S^4$.
Let $\tc \RP^3=\R P^3\times [0,1]/\RP^3\times\{0\}$ be the closed cone.
The total space of the associated cone bundle of $\tau$,  $\tc \RP^3 \to T \xrightarrow{\tau''}  S^4$,
 is  a CS set with $S^4$ as  only singular stratum. 
Finally, we consider the double mapping cylinder of 
$\tau'$
and $\R P^7\to *$: 
$$
X = T \sqcup_{\partial T= \RP^7} \rc \RP^7.
$$
This is  a CS set with two singular strata, $S^4$ and the apex $\tv$ of the cone.
Their links are $\RP^3$ and  $\RP^7$, respectively.
 We use the  perversity $ \ov p$ defined by  $D\ov p (S^4) = 0$ and $D\ov p(\tv) = 2$.
 The hypotheses of \thmref{thm:hurintersection}.2) are not satisfied since
$
 \pi^{\ov p}_1(\RP^7) =  \pi^{\ov p}_1(\RP^3) = \Z_2\ne 0
 $.
 
 Let $0<j\leq 2$.
 For the determination of $\pi_{j}^{\ov{p}}(X)$, we observe that the
 $\ov p$-allowability condition  \eqref{equa:admissibleST} gives
$
\pi^{\ov p}_{j} (X) \stackrel {} = \pi^{\ov p}_{j}(X\menos \{\tv\} )$.
 As $X\menos\{\tv\}$ is stratified homotopy equivalent
to $T$, we get  $\pi^{\ov p}_{j} (X)= \pi^{\ov p}_{j}(T )$.
From the homotopy long exact sequence of the bundle $\tau''$ (\cite[Theorem 2.2]{MR1404919}), 
we deduce
$\pi_{j}^{\ov p} (T)=\pi_j^{\ov p} (\rc \RP^3)=0$. 
Therefore, the condition ``$\pi_{j}^{\ov p} (X)=0$ for $j\leq 2$''  is satisfied.

We claim that $h_{3}^{\ov{p}}\colon \pi_{3}^{\ov{p}}(X )\to {H}_{3}^{\ov{p}}(X;\Z)$ is  an isomorphism
and
$h_{4}^{\ov{p}}\colon \pi_{4}^{\ov{p}}(X )\to {H}_{4}^{\ov{p}}(X;\Z)$ is not an epimorphism, which is equivalent to:
 \emph{$  \cJ^{\ov{p}}_{3} $ is  an isomorphism and $  \cJ^{\ov{p}}_{4} $ is not surjective.}
We prove that claim by using the  Mayer-Vietoris exact sequences 
(\cite[Proposition 4.1]{CST3} and    \thmref{thm:MV}) 
associated to the covering $\{X \menos \{\tv\} , X\menos S^4\}$.
Let us notice the existence of stratified homotopy equivalences,
$X\menos \{\tv\}\simeq_{s} T$,
$X\menos S^4\simeq_{s}\rc \R P^7$
and
$(X\menos \{\tv\})\cap (X\menos S^4)\simeq_{s}\R P^7$.
We compute the various homologies.\\
$\bullet$ In $T$, the perversity $\ov{p}$ is the top perversity $\ov{t}$. 
From \cite[Proposition 5.4]{CST3}, \lemref{lem:quillen} and \corref{cor:thommather}, 
we get  $H^{\ov p}_*(T) = H_* ^{\ov t}(T) = H_{*}(T)=H_{*}(S^4)$,
and $H_*(\crG_{\ov p} T)  =  H_*(\crG_{\ov t} T)  = H_*( T) =H_{*}(S^4)$. \\
$\bullet$  From  \cite[Proposition 5.2]{CST3}, we have
$ H_j^{\ov p}(\rc \RP^7) = 0 $ if $j>2$ and $H_1^{\ov p}(\rc \RP^7)  =  H_{1}( \RP^7)=\Z_{2}$.
\propref{prop:coneandhomotopy} implies that $
 \crG_{\ov p} (\rc \RP^7)
 $ is the Eilenberg-MacLane space $K(\Z_2,1)=\R P^{\infty}$.
 
 \smallskip
Thus, in low degrees, the Mayer-Vietoris sequences reduce to
$$
\scalebox{1}{
\xymatrix@C=.5cm{
  0
   \ar[r]\ar[d]&
   \Z
    \ar[r]\ar[d] &
   H_4(\crG_{\ov p}X)   \ar[r]\ar[d]^{ \cJ^{\ov{p}}_{4}}&
\Z_2
   \ar[r]\ar[d]&
\Z_2
   \ar[r]\ar[d]&
   H_3(\crG_{\ov p} X)  \ar[d]^{ \cJ^{\ov{p}}_{3}} \ar[r] &
  \ar[d] 0
 \\
0
   \ar[r]&
\Z
    \ar[r] &
   H_4^{\ov p}(X)  \ar[r] &
\Z_2
   \ar[r]&
0
   \ar[r]&
   H_3^{\ov p}(X)  \ar[r] &
0.
 }}
$$
The horizontal map $\Z_{2}\to \Z_{2}$ being an isomorphism, the map $\cJ_{3}^{\ov{p}}$ is an isomorphism and 
$\cJ_{4}^{\ov{p}}$ is not surjective.
\end{example}

\providecommand{\bysame}{\leavevmode\hbox to3em{\hrulefill}\thinspace}
\providecommand{\MR}{\relax\ifhmode\unskip\space\fi MR }
\providecommand{\MRhref}[2]{%
  \href{http://www.ams.org/mathscinet-getitem?mr=#1}{#2}
}
\providecommand{\href}[2]{#2}

%
\end{document}